\documentclass{article}

\usepackage{authblk}

\usepackage[utf8]{inputenc}
\usepackage[T1]{fontenc}
\usepackage[english]{babel}
\usepackage{textcomp}
\usepackage{amsmath,amssymb,amsthm}
\usepackage{lmodern}
\usepackage[a4paper]{geometry}
\usepackage{xcolor, pict2e}
\usepackage{microtype}
\usepackage{caption}
\usepackage{listings}
\usepackage{multicol}
\usepackage{moreverb}
\usepackage{hyperref}
\hypersetup{pdfstartview=XYZ}
\usepackage{wrapfig}
\usepackage[sans]{dsfont}

\usepackage{tikz, pgfplots}
\usetikzlibrary{positioning}
\usepackage{amsmath,amssymb}
\usetikzlibrary{decorations.pathreplacing}

\usepackage{ytableau}

\usepackage{mathdots}


\usepackage{hyperref}
\hypersetup{
    colorlinks=true,
    linkcolor=blue,
    citecolor=magenta,
    urlcolor=blue,
    pdfborder={0 0 0}
}

\definecolor{fond}{rgb}{0.05,0.05,0.25}

\DeclareMathOperator{\Poiss}{Poiss}
\DeclareMathOperator{\dtv}{d_{TV}}

\newcommand{\dLtwo}{\ensuremath{\textup{d}_{L^2}}}

\DeclareMathOperator{\ch}{\mathrm{ch}}
\DeclareMathOperator{\Id}{\mathrm{Id}}

\DeclareMathOperator{\dgex}{\cE}

\DeclareMathOperator{\cyc}{\mathrm{cyc}}
\DeclareMathOperator{\Unif}{\mathrm{Unif}}
\DeclareMathOperator{\Bin}{\mathrm{Bin}}

\DeclareMathOperator{\Shift}{Shift}

\DeclareMathOperator{\Cay}{\mathrm{Cay}}

\newcommand{\Cvirt}{\ensuremath{C}_\textup{virt}}
\newcommand{\Cpart}{\ensuremath{C}_\textup{part}}

\newcommand{\rsym}{\ensuremath{r}_\textup{sym}}

\newcommand{\kS}{{\ensuremath{\mathfrak{S}}} }
\newcommand{\kA}{{\ensuremath{\mathfrak{A}}} }
\newcommand{\kE}{{\ensuremath{\mathfrak{E}}} }

\usepackage[labelfont=bf, margin=1cm]{caption}
\usepackage{enumitem}
\setlist[itemize,1]{nosep}
\setlist[enumerate,1]{itemsep=0pt,label=(\alph*)}

\usepackage{float}
\usepackage[caption = false]{subfig}
\usepackage{graphicx}

\newtheorem*{theorem*}{Theorem}
\newtheorem{theorem}{Theorem}[section]
\newtheorem{proposition}[theorem]{Proposition}
\newtheorem{lemma}[theorem]{Lemma}

\newtheorem{example}[theorem]{Example}
\newtheorem{definition}[theorem]{Definition}

\theoremstyle{remark}
\newtheorem{remark}{Remark}[section]

\usepackage{todonotes}

\newcommand{\cC}{{\ensuremath{\mathcal C}} }

\newcommand{\cE}{{\ensuremath{\mathcal E}} }

\newcommand{\cT}{{\ensuremath{\mathcal T}} }

\newcommand{\bbE}{{\ensuremath{\mathbb E}} }

\newcommand{\bbZ}{{\ensuremath{\mathbb Z}} }

\newcommand{\bbR}{{\ensuremath{\mathbb R}} }
\newcommand{\bbC}{{\ensuremath{\mathbb C}} }
\newcommand{\bbP}{{\ensuremath{\mathbb P}} }

\newcommand{\ag}{\left\{ } 

\newcommand{\ad}{\right\} }

\newcommand{\cg}{\left[}
\newcommand{\cd}{\right]}
\newcommand{\pg}{\left(} 
\newcommand{\pd}{\right)}
\newcommand{\bg}{\left|}
\newcommand{\bd}{\right|}
\newcommand{\lf}{\left\lfloor}
\newcommand{\rf}{\right\rfloor}
\newcommand{\lc}{\left\lceil}
\newcommand{\rc}{\right\rceil}

\newcommand{\du}{{\ensuremath{\;:\;}}} 

\newcommand{\floor}[1]{\left\lfloor #1 \right\rfloor}

\DeclareMathOperator{\ST}{ST}

\DeclareMathOperator{\supp}{supp}

\DeclareMathOperator{\Conj}{Conj}

\DeclareMathOperator{\sgn}{sgn}

\makeatletter
\newcommand*\bigcdot{\mathpalette\bigcdot@{.5}}
\newcommand*\bigcdot@[2]{\mathbin{\vcenter{\hbox{\scalebox{#2}{$\m@th#1\bullet$}}}}}
\makeatother

\setcounter{secnumdepth}{4} 
\numberwithin{equation}{section}

\pgfplotsset{compat=1.18}

\begin{document}

\renewcommand{\theparagraph}{\thesubsection.\arabic{paragraph}} 
\title{Sharp character bounds and cutoff for symmetric groups}

\author[1]{Sam Olesker-Taylor}
\author[2]{Lucas Teyssier}
\author[3]{Paul Thévenin}
\affil[1]{University of Warwick, \texttt{oleskertaylor.sam@gmail.com}}
\affil[2]{University of British Columbia, \texttt{teyssier@math.ubc.ca}}
\affil[3]{Université d'Angers, \texttt{paul.thevenin@univ-angers.fr}}

\maketitle
\begin{abstract}
        We develop a flexible technique to bound the characters of symmetric groups, via the Naruse hook length formula, the Larsen--Shalev character bounds, and appropriate diagram slicings. It allows us to prove a uniform exponential character bound with optimal constant $1/2$.
        We furthermore prove sharp character bounds for conjugacy classes having a macroscopic number of fixed points, and deduce that the random walks on the associated Cayley graphs exhibit a total variation and $L^2$ cutoff.
\end{abstract}

\tableofcontents

\section{Introduction}

Fourier analysis on groups, also known as representation theory, is a rich topic connected to many areas of mathematics. Obtaining precise bounds on characters enables solving a variety of problems from mixing times, geometric group theory, word maps, covering numbers, and random maps. Strong character bounds were proved for finite groups of Lie type \cite{Gluck1993, LarsenShalevTiep2011Waring, BezrukavnikovLiebeckShalevTiep2018, LarsenTiep2024FiniteClassicalGroups}, finite classical groups \cite{GuralnickLarsenTiep2020characterlevels1,GuralnickLarsenTiep2024characterlevels2}, and symmetric groups \cite{Roichman1996, MullerSchlagePuchta2007precutoff, LarsenShalev2008, FeraySniady2011}.  
Of particular interest for applications are \textit{exponential} character bounds, that is, bounds on characters written as powers of the dimensions of irreducible representations. In this paper we prove sharp exponential character bounds for symmetric groups that are optimal for applications to mixing times.

\subsection{Main results}

\subsubsection{Bounds on characters}

Let $\kS_n$ be the symmetric group of index $n$, and denote by $\widehat{\kS_n}$ its set of irreducible representations. If $\sigma\in \kS_n$ and $\lambda \in \widehat{\kS_n}$, we denote the dimension of the irreducible representation $\lambda$ by $d_\lambda$, the associated character by $\ch^\lambda(\sigma)$, and the associated reduced character by $\chi^\lambda(\sigma) := \ch^\lambda(\sigma)/d_\lambda$.
If $\sigma \in \kS_n$, we denote its conjugacy class by $\cC(\sigma)$, its number of cycles of length $i$ by $f_i(\sigma)$, its total number of cycles by $\cyc(\sigma)$, and we set $f(\sigma) := \max(f_1(\sigma), 1)$. 

\medskip

The first exponential bound over a wide range of conjugacy classes was proved by Roichman \cite{Roichman1996}. He showed that for every $\delta>0$, there exists a constant $q>0$ such that uniformly over all irreducible representations $\lambda \in \widehat{\kS_n}$, and all $\sigma \in \kS_n$ with at least $\delta n$ fixed points, 
\begin{equation}\label{eq: uniform bound characters symmetric}
        \bg \chi^\lambda(\sigma) \bd \leq d_\lambda^{\pg - q+o(1) \pd\frac{\ln (n/f(\sigma))}{\ln n}}.
    \end{equation}
Later, Müller and Schlage-Puchta \cite{MullerSchlagePuchta2007precutoff} improved this to a uniform bound over all permutations, with an explicit constant $q=30$ (and $q=12$ if restricted to permutations with at least $n^{1-o(1)}$ fixed points).
Then, alongside celebrated character bounds that depend on the whole cycle structure of permutations and are sharp for fixed-point-free permutations, 
Larsen and Shalev \cite{LarsenShalev2008} proved that \eqref{eq: uniform bound characters symmetric} holds with constant $q = 1/2$ uniformly over permutations with at most $n^{1-o(1)}$ fixed points, which is sharp for fixed-point-free involutions. 
Recently, Lifschitz and Marmor \cite{LifschitzMarmor2024charactershypercontractivity} also proved a character bound similar to \eqref{eq: uniform bound characters symmetric} for permutations with at most $n/(\ln n)^{O(1)}$ cycles. 
Finally, in \cite{TeyssierThévenin2025virtualdegreeswitten} two of us extended the bounds of \cite{LarsenShalev2008}, and proved in particular that \eqref{eq: uniform bound characters symmetric} holds with constant $q=1/2$ uniformly for permutations with $o(n)$ fixed points. We complete this line of results, proving a uniform character bound for symmetric groups with optimal constant $1/2$.

\begin{theorem}\label{thm: uniform bound 1/2 intro}
    As $n\to \infty$, uniformly over all $\sigma \in \kS_n$ and $\lambda\in \widehat{\kS_n}$, we have
\begin{equation}
    \bg \chi^\lambda(\sigma) \bd \leq d_\lambda^{\pg - 1/2+o(1) \pd\frac{\ln (n/f(\sigma))}{\ln n}}.
\end{equation}
\end{theorem}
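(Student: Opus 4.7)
The plan is to combine the Murnaghan--Nakayama rule with the Larsen--Shalev bound, bridged by the Naruse hook length formula and a diagram slicing argument. Let $f = f(\sigma)$ and let $k_1 \geq \cdots \geq k_s \geq 2$ be the non-trivial cycle lengths of $\sigma$, with $k = \sum_i k_i = n - f_1(\sigma)$. The inequality to prove is equivalent to
\[
|\ch^\lambda(\sigma)| \leq d_\lambda^{1 - (1/2 + o(1))\ln(n/f)/\ln n}.
\]

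First I would fix a parameter $K \in [0, k]$ and a subset $S$ of the non-trivial cycles of total length approximately $K$. Applying the Murnaghan--Nakayama rule iteratively to strip off the $S$-cycles yields
\[
\ch^\lambda(\sigma) = \sum_{\mu \subset \lambda,\, |\mu| = n - K} M_{\lambda/\mu}(S)\, \ch^\mu(\sigma'),
\]
where $\sigma' \in \kS_{n-K}$ is $\sigma$ with the $S$-cycles removed, and $M_{\lambda/\mu}(S)$ is the signed count of border strip tableaux of shape $\lambda/\mu$ with strip sizes $\{k_i\}_{i \in S}$. On the residual character I apply Larsen--Shalev to get $|\chi^\mu(\sigma')| \leq d_\mu^{-\eta(K) + o(1)}$, where $\eta(K) \to 1/2$ as the support density of $\sigma'$ inside $\kS_{n-K}$ grows. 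For the combinatorial factor, the Naruse hook length formula combined with a diagram slicing argument bounds $|M_{\lambda/\mu}(S)|$ in terms of the number of standard skew tableaux $f^{\lambda/\mu}$; the identity $d_\lambda = \sum_\mu f^{\lambda/\mu}\, d_\mu$ then lets me collect terms.

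Finally, I would optimise over $K$. Taking $K$ larger pushes $\eta(K)$ towards the Larsen--Shalev constant $1/2$, but shrinks the dimension ratio $d_\mu / d_\lambda$ available. The optimum should land on the exponent $(1/2)\ln(n/f)/\ln n$ predicted by the theorem, after algebra relating $K$ to $f$ through the hook-length formula. I expect the main obstacle to be the tightness of the slicing bound on $|M_{\lambda/\mu}(S)|$: the sharp constant $1/2$ leaves no slack, so the combinatorial estimate must exactly match both the MN signs and the Naruse-driven skew tableau counts. Making this work uniformly in $\lambda$ will likely require a case analysis (near-rectangular versus long hook-like partitions), so that the slicing remains efficient regardless of the geometry of $\lambda$, and a careful comparison between the gain from Larsen--Shalev and the loss from the dimension ratio at each scale.
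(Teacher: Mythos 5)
Your proposal has the stripping direction backwards, and this is fatal. You propose to fix $K$ and strip a subset $S$ of the \emph{non-trivial} cycles, leaving a permutation $\sigma'\in\kS_{n-K}$ that still contains all the fixed points of $\sigma$. Its proportion of fixed points is then $f_1(\sigma)/(n-K)\geq f_1(\sigma)/n$, so $\sigma'$ is \emph{more} degenerate, not less. Larsen--Shalev gives $|\chi^\mu(\sigma')|\leq d_\mu^{-(1-E(\sigma'))+o(1)}$, and $E$ only approaches $1/2$ for fixed-point-\emph{free} permutations; with the fixed points retained, $E(\sigma')$ is closer to $1$, so your claimed $\eta(K)\to 1/2$ fails — $\eta(K)$ moves away from $1/2$ as $K$ grows. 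The paper does the opposite: it peels the $1$-cycles first (Lemma \ref{lem: MN rewritten with sigma star}), so the residual permutation $\sigma^*\in\kS_k$ is fixed-point-free and $E(\sigma^*)\leq 1/2$ exactly; then the Naruse formula controls the skew-dimension ratio $d_{\lambda\backslash\mu}/d_\lambda$ coming from the removed $1$-cycles.

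Separately, the actual proof of this statement in the paper is far shorter than the route you sketch. It is a two-case dichotomy. If $f(\sigma)\leq f_-(n)=n/(\ln n)^{1/4}$, then $\ln(n/f(\sigma))\gtrsim\ln\ln n$, and the sharpened Larsen--Shalev bound of the companion paper (Lemma \ref{lem:ALS improved character bound}, combined with $E(\sigma)-1\leq-\tfrac12\,\ln(n/f(\sigma))/\ln n$) gives $|\chi^\lambda(\sigma)|\leq d_\lambda^{-(1/2)(1-O(1/\ln\ln n))\ln(n/f(\sigma))/\ln n}$ directly, with no MN/Naruse needed. If $f(\sigma)\geq f_-(n)$, one quotes Theorem \ref{thm: character bound with no error nor helping term}, which gives the strictly stronger exponent $-\ln(n/f(\sigma))/\ln n$. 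The full MN$+$Naruse$+$slicing machinery is needed only to prove that second theorem, not this one. Your ``optimise over $K$'' step is left as a placeholder, and even if the direction were fixed, nothing in the sketch identifies why the optimum would land on the exponent claimed or how to control the combinatorial factor $M_{\lambda/\mu}(S)$ when the stripped pieces are not single boxes.
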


 Probabilistic intuition from mixing times suggests that sharper bounds should hold given mild constraints on the cycle structure, and moreover that the number of fixed points is the most crucial parameter in this setup. We prove optimal character bounds for permutations with order $n$ fixed points, which is the scale considered by Roichman \cite{Roichman1996}.

\begin{theorem}\label{thm: character bound with no error nor helping term}
Let $\delta>0$. There exists $n_0 = n_0(\delta)$ such that for every $n\geq n_0$, uniformly over all $\lambda \in \widehat{\kS_n}$ and $\sigma \in \kS_n$ such that $f(\sigma)\geq \delta n$, we have 
\begin{equation}
    \bg \chi^\lambda(\sigma) \bd \leq d_\lambda^{-\frac{\ln (n/f(\sigma))}{\ln n}}.
\end{equation}
\end{theorem}

Theorem \ref{thm: character bound with no error nor helping term} not only has the best possible constant $q=1$ in \eqref{eq: uniform bound characters symmetric}, but also does not have any error term. Actually, allowing the bound to depend on the shape of the representation $\lambda$, we can obtain even stronger bounds, as we show in Theorem \ref{thm: character bound with helping term}.

\subsubsection{Applications to mixing times}
Let $\cC$ be a conjugacy class of $\kS_n$, let $\sigma_1, \sigma_2,\ldots$ be i.i.d.\ random variables uniform on $\cC$, and let $X_t = \sigma_1\cdots\sigma_t$ for $t\geq 0$. The law of $X_t$ is then given by $\Unif_{\cC}^{*t}$, where $\Unif_E$ denotes the uniform measure on a set $E$. Another point of view is that $X_t$ is the position after $t$ steps of the simple random walk started at the identity permutation $\Id$ on the Cayley graph $\Cay(\kS_n, \cC)$, that is, on the graph with vertex set $\kS_n$ and edge set $\ag \ag \sigma, \sigma \tau\ad \du (\sigma, \tau)\in \kS_n \times \cC \ad$. 

Denote by $\kA_n$ the alternating group of index $n$. Denote also by $\mathfrak{E}(\cC, t)$ 
the set $\mathfrak{S}_n \backslash\mathfrak{A}_n$ if $\cC \subset \mathfrak{S}_n \backslash\mathfrak{A}_n$ and $t$ is odd, and $\mathfrak{A}_n$ otherwise; this is the coset of the alternating group $\kA_n$ on which $X_t$ lies.
Finally, we define the total variation and $L^2$ (coset) distances to stationarity by
\begin{equation}\label{eq: def dtv coset}
    \dtv(\cC, t) = \dtv\pg \Unif_{\cC}^{*t}, \Unif_{\kE_n(\cC, t)} \pd = \frac{1}{2}\sum_{\sigma \in \kE(\cC, t) } \bg \Unif_{\cC}^{*t}(\sigma) - \frac{2}{n!}\bd,
\end{equation}
and
\begin{equation}\label{eq: def dLtwo coset}
    \dLtwo(\cC, t) = \dLtwo\pg \Unif_{\cC}^{*t}, \Unif_{\kE_n(\cC, t)} \pd = \sqrt{\frac{n!}{2}  \sum_{\sigma \in \kE(\cC, t) } \bg \Unif_{\cC}^{*t}(\sigma) - \frac{2}{n!}\bd^2}.
\end{equation}

\medskip

In a landmark paper, Diaconis and Shahshahani \cite{DiaconisShahshahani1981} used representation theory to show a sharp phase transition, called \textit{cutoff}, for the conjugacy class $\cT^{(n)}$ of transpositions on $\kS_n$.  They showed that for any $\varepsilon>0$,
\begin{equation}
        \dtv\pg \cT^{(n)}, \lf (1-\varepsilon)\frac{1}{2}n\ln n\rf\pd \xrightarrow[n\to \infty]{} 1 \quad \text{ and } \quad  \dtv\pg \cT^{(n)}, \lc (1+\varepsilon)\frac{1}{2}n\ln n \rc \pd\xrightarrow[n\to \infty]{} 0.
    \end{equation}
This not only initiated the theory of mixing times, together with Aldous \cite{Aldous1983mixing, AldousDiaconis1986}, but also built a fruitful connection between probability and representation theory. 

\medskip

The phase transition result of Diaconis and Shahshashani \cite{DiaconisShahshahani1981} had several refinements and extensions to other conjugacy classes, that we discuss in Section \ref{s: more background and related results}.
We show that cutoff holds in total variation and in $L^2$ for all conjugacy classes with order $n$ fixed points.

Denote the set of conjugacy classes of $\kS_n$ by $\Conj(\kS_n)$, and set $\Conj^*(\kS_n) = \Conj(\kS_n)\backslash \ag \ag \Id \ad \ad$.
If $\cC\in \Conj(\kS_n)$, $\sigma \in \cC$, and $F$ is a class function (that is, a function $F\du \kS_n \to \bbC$ such that $F(\sigma_1) = F(\sigma_2)$ if $\sigma_1$ and $\sigma_2$ are in the same conjugacy class), we may write $F(\cC)$ for $F(\sigma)$. 
For $\cC\in \Conj^*(\kS_n)$, we set
\begin{equation}
    t_{\cC} = \frac{\ln n}{\ln(n/f(\cC))}.
\end{equation}

\begin{theorem}\label{thm: cutoff intro}
Let $\delta>0$ and $\varepsilon>0$. As $n\to \infty$, uniformly over all conjugacy classes $\cC \in \Conj^*(\kS_n)$ such that $f(\cC) \geq \delta n$, we have
  \begin{equation}
        \dtv(\cC, \lf (1-\varepsilon)t_{\cC}\rf) \xrightarrow[n\to \infty]{} 1 \quad \text{ and } \quad  \dtv(\cC, \lc (1+\varepsilon)t_{\cC}\rc) \xrightarrow[n\to \infty]{} 0,
    \end{equation}
and
\begin{equation}
        \dLtwo(\cC, \lf (1-\varepsilon)t_{\cC}\rf) \xrightarrow[n\to \infty]{} \infty \quad \text{ and } \quad  \dLtwo(\cC, \lc (1+\varepsilon)t_{\cC}\rc) \xrightarrow[n\to \infty]{} 0.
    \end{equation}
\end{theorem}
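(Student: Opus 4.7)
The plan is to route everything through the Plancherel identity on $\kS_n$. For any $\sigma \in \cC$, the Fourier transform of the class function $\mu = \Unif_\cC$ satisfies $\hat\mu(\lambda) = \chi^\lambda(\sigma)\,\mathrm{Id}_{d_\lambda}$ by Schur's lemma, so
\begin{equation*}
4\,\dLtwo(\cC, t)^2 \;=\; \sum_{\lambda} d_\lambda^2\, \chi^\lambda(\sigma)^{2t},
\end{equation*}
the sum being over the nontrivial irreducibles of $\kS_n$ contributing to the coset $\kE(\cC,t)$. Since $\dtv \le \tfrac12 \dLtwo$, the TV upper bound at $t = \lceil (1+\eps) t_\cC \rceil$ reduces to showing this sum is $o(1)$, while the $L^2$ lower bound at $t = \lfloor (1-\eps) t_\cC \rfloor$ reduces to lower-bounding a single term.

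For the upper bound I would split by the level $r = \rsym(\lambda)$. Plugging Theorem~\ref{thm: main character bound intro} into each summand yields
\begin{equation*}
d_\lambda^2 \, \chi^\lambda(\sigma)^{2t} \;\le\; d_\lambda^{\,2 - 2(1+\eps)\ln n / \ln(n/\psi(r))},
\end{equation*}
and since $\psi(r) \le n^{1/8}$, the exponent is a negative quantity bounded above by $-2\eps'(r,\eps)$ with $\eps'(r,\eps) > 0$ increasing in $r$. The level-$r$ slice then takes the form of a Witten-zeta sum $\sum_{\lambda : \rsym(\lambda) = r} d_\lambda^{-2\eps'(r)}$, for which the virtual-degree and Witten-zeta estimates of the companion paper \cite{TeyssierThévenin2025virtualdegreeswitten} are sharp enough to sum first over $\lambda$ at fixed $r$ and then over $r$. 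For bounded levels this crude input is too weak; there I would instead invoke Theorem~\ref{thm: convergence r fini pour les caractères intro}, which combined with $d_\lambda \le n^r/r!$ gives $d_\lambda^2 \chi^\lambda(\sigma)^{2t} = O(n^{-2\eps r}/(r!)^2)$, summable over $r$.

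For the lower bounds I would keep only the standard representation $\lambda_* = (n-1, 1)$, for which $\chi^{\lambda_*}(\sigma) = (f-1)/(n-1)$:
\begin{equation*}
4\,\dLtwo(\cC, t)^2 \;\ge\; (n-1)^2 \bigg(\frac{f-1}{n-1}\bigg)^{2t} \;=\; n^{2\eps + o(1)} \;\longrightarrow\; \infty
\end{equation*}
at $t = \lfloor (1-\eps)t_\cC \rfloor$, uniformly over $\cC$ with $f(\cC) \ge f_-(n)$. For the TV lower bound I would run a second-moment argument on the test function $F(\sigma) = f_1(\sigma) - 1 = \ch^{\lambda_*}(\sigma)$: one has $\E_{\mu^{*t}}[F] = (n-1)\chi^{\lambda_*}(\sigma)^t = n^{\eps + o(1)}$ and $\mathrm{Var}_U(F) = \Theta(1)$, while $\mathrm{Var}_{\mu^{*t}}(F)$ reduces, through the decomposition of $(\ch^{\lambda_*})^2$ into characters of level at most $2$, to quantities controlled precisely by Theorem~\ref{thm: convergence r fini pour les caractères intro}. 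A Chebyshev comparison of the two laws on the event $\{F \ge \tfrac12 \E_{\mu^{*t}}[F]\}$ then forces $\dtv \to 1$.

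The main obstacle is the uniformity of the high-level tail. When $\rsym(\lambda) \asymp n$, the compensatory $\psi(\rsym(\lambda))$ in Theorem~\ref{thm: main character bound intro} substantially weakens the $d_\lambda$-exponent, and one must combine it with a Witten-zeta bound from the companion paper sharp enough to beat the combinatorial proliferation of partitions at such levels. A more delicate secondary point is that the $o(1/\ln n)$ error in Theorem~\ref{thm: convergence r fini pour les caractères intro} has to be precise enough to survive being raised to the power $t \asymp \ln n / \ln(n/f)$, and in particular to produce the near-cancellation between $(\ch^{\lambda_*})^2$ and the level-two characters which underlies the variance estimate for the TV lower bound.
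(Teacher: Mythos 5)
Your proof is correct in its essentials but takes a genuinely more direct route than the paper. The paper proves Theorem~\ref{thm: cutoff intro} simply as a corollary of the profile theorem (Theorem~\ref{thm: profils TV et L2}), which in turn requires the full Poisson--Charlier machinery (Lemmas~\ref{lem: poly char identity}--\ref{lem: E S n ell approx}) to identify the limiting profile. You bypass this entirely: for the upper bounds you plug Theorem~\ref{thm: main character bound intro} directly into the Diaconis--Shahshahani/Plancherel formula, so the exponent on $d_\lambda$ is uniformly at most $-2\eps$, and then the Liebeck--Shalev Witten zeta bound~\eqref{eq:bound Liebeck Shalev Witten zeta function} closes the argument; this is a cruder version of what Proposition~\ref{prop: filtration Witten zeta explicite} does when tracking the profile. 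For the lower bounds you keep only the standard representation ($L^2$) and run a second-moment/Chebyshev argument on $F = f_1 - 1 = \ch^{[n-1,1]}$ (TV), whereas the paper extracts both lower bounds from the convergence of $\dtv$ to the Poisson profile as $a\to-\infty$. Your route is more elementary, gives cutoff but not the profile, and is closer in spirit to the original Diaconis--Shahshahani argument.

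Three small points. First, the Plancherel identity in the paper's coset normalization \eqref{eq: coset Diaconis--Shahshahani upper bound lemma} carries a factor $\tfrac12$ (so $2\dLtwo^2 = \sum d_\lambda^2\chi^{2t}$, not $4\dLtwo^2$); this is harmless for the asymptotics but should be stated correctly. Second, your remark that ``for bounded levels this crude input is too weak'' is incorrect: for any fixed $r\geq 1$, $d_\lambda\geq n-1$ and $d_\lambda^{-2\eps}\to 0$, so Theorem~\ref{thm: main character bound intro} together with the Witten zeta bound already suffices uniformly, and the appeal to Theorem~\ref{thm: convergence r fini pour les caractères intro} in the upper bound is unnecessary. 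Third, the variance estimate in the TV lower bound is the only place needing care: one must check that the $o(1/\ln n)$ error in Theorem~\ref{thm: convergence r fini pour les caractères intro}, when raised to the power $t\asymp\ln n/\ln(n/f)$, yields $\chi^\lambda(\cC)^t = (f/n)^{rt}(1+o(1))$ uniformly for the four representations of level $\leq 2$, so that the $n^{2\eps}$ terms in $\E[F^2]$ and $\E[F]^2$ cancel up to $o(n^{2\eps})$. Since $t\cdot o(1/\ln n)\cdot\ln(n/f) = o(1)$ uniformly when $f\geq f_-(n)$, this does go through, and the conclusion $\operatorname{Var}_{\mu^{*t}}(F) = o(\E[F]^2)$ holds; you should record this computation explicitly rather than leaving it implicit.
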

This is the first $L^2$ cutoff over a wide range of conjugacy classes, for any group. We also prove a refined $L^2$ bound in Proposition \ref{prop: improved ell two upper bound}.

We conjecture that the results of Theorems \ref{thm: character bound with no error nor helping term} and \ref{thm: cutoff intro} extend to all conjugacy classes with $\omega(\sqrt{n})$ fixed points.
The results for conjugacy classes $\cC$ of $\kS_n$ with $O(\sqrt{n})$ fixed points must however be different, due to cancellations of transpositions that can prevent $\Unif_{\cC}^{*2}$ to be close to $\Unif_{\kA_n}$ in total variation.

\subsection{Additional background and related results}\label{s: more background and related results}

Since the foundational work of Diaconis and Shahshahani \cite{DiaconisShahshahani1981},  mixing times for random walk on groups have been widely studied, and the connections between mixing times and representation theory have kept growing stronger. 
Diaconis \cite{LivreDiaconis1988, Diaconis1996CutoffFiniteMarkovChains} led further discussions about this topic, and wrote the following about character bounds leading to cutoff for conjugacy invariant walks on $\kS_n$ in \cite{Diaconis2003RandomWalksGroupsCharactersGeometry}:  “This is closely
related to earlier work of Roichman \cite{Roichman1996} who derived useful bounds on character
ratios for large conjugacy classes in $\kS_n$. Alas his work has not been pushed through
to give sharp bounds in probability problems. This is a hard but potentially fruitful area of study.” 

\medskip

The phase transition result of Diaconis and Shahshahani \cite{DiaconisShahshahani1981} had refinements and alternative proofs \cite{SaloffCosteZuniga2008, White2019}, and several extensions to other conjugacy classes of $\kS_n$: conjugacy classes with support $\leq 6$ and $7$-cycles in \cite{Roussel2000}, $k$-cycles with $k = O(1)$ in \cite{BerestyckiSchrammZeitouni2011}, $k$-cycles with $k = o(n)$ in \cite{Hough2016}, and conjugacy classes with support $o(n)$ in \cite{BerestyckiSengul2019}. 
Also, soft techniques relying on entropy \cite{Salez2024} recently enabled recovering that conjugacy classes with support $n^{o(1)}$ exhibit cutoff \cite{PedrottiSalez2025newcriterion}. Theorem \ref{thm: cutoff intro} proves that cutoff holds for all conjugacy classes with order $n$ fixed points, both in total variation and in $L^2$.

For conjugacy classes with larger support, Lulov and Pak \cite{LulovPak2002} studied long $k$-cycles, with $k > n/2$, and Lulov \cite{Lulovthesis1996} understood the mixing time of conjugacy classes with a unique cycle length ($n/m$ cycles of length $m$), using the character bounds of Fomin and Lulov
\cite{FomLul95}. Then Larsen and Shalev \cite{LarsenShalev2008} proved that fixed-point-free conjugacy classes mix in 2 or 3 steps, and building on the work of Larsen and Shalev, two of us characterized in \cite{TeyssierThévenin2025virtualdegreeswitten} which fixed-point-free conjugacy classes mix in 2 steps. 

What happens within the phase transition is also now better understood. In \cite{Teyssier2020}, the second named author found a character identity involving Poisson--Charlier polynomials, that enabled him to compute the \textit{cutoff profile}. For fixed $a\in \bbR$,
the total variation distance to stationarity after $\frac{1}{2}(n\ln n + an)$ steps converges to $\dtv\pg \Poiss(1), \Poiss\pg 1+e^{-a} \pd \pd$, where $\Poiss(\alpha)$ denotes the Poisson distribution with parameter $\alpha >0$.
This was extended to cycles of length $k=o(n)$ in \cite{NestoridiOlesker-Taylor2022limitprofiles} and another proof for transpositions was also recently found
\cite{JainSawhney2024transpositionprofileotherproof}. This will be further extended to conjugacy classes with order $n$ fixed points in a follow up paper \cite{Teyssier2025ProfilesConjugacyInvariant}, using the character bounds proved in the present paper.

\medskip

Exponential character bounds as in \eqref{eq: uniform bound characters symmetric} are equivalent to mixing time results in $L^2$ (which imply upper bounds in total variation), and obtaining mixing time bounds was already one of the main aims for several of the aforementioned general representation theoretic results, including \cite{Roichman1996,MullerSchlagePuchta2007precutoff,LarsenShalev2008}. Roichman \cite{Roichman1996} found the correct order of magnitude of the mixing time for conjugacy classes of $\kS_n$ with order $n$ fixed points. Müller and Schlage-Puchta’s uniform character bound \cite{MullerSchlagePuchta2007precutoff} showed that $\dLtwo(\cC, 31\frac{\ln n}{\ln(n/f(\cC))})\to 0$ as $n\to \infty$, uniformly over all conjugacy classes $\cC \ne \ag \Id \ad$. Finally, Larsen and Shalev \cite{LarsenShalev2008} gave precise bounds for permutations with few fixed points, and in particular showed that $\dLtwo(\cC, 3)\to 0$ as $n\to \infty$, uniformly all fixed-point-free conjugacy classes $\cC$. 

\medskip

Several related conjectures were made. Roichman’s conjecture \cite[Conjecture 6.7]{Roichman1996} is strongly related to Theorem \ref{thm: character bound with no error nor helping term}. Character bounds are now usually written in exponential form, since this interacts well with bounds on the Witten zeta function, as explained in Section \ref{s: a classical technique}, but the conjectured bound is similar for applications to mixing times. However the conjecture was too optimistically stated to hold uniformly over all conjugacy classes, and \cite{MullerSchlagePuchta2007precutoff} observed that it fails to hold for some fixed-point-free conjugacy classes. 

Roussel \cite[Conjecture 1.1]{Roussel2000} conjectured that for any $\delta>0$, there exists a constant $C>0$ such that for any $a\in \bbR$,  as $n\to \infty$, uniformly over all conjugacy classes $\cC$ whose support $\supp(\cC)$ satisfies $2\leq \supp(\cC) \leq (1-\delta)n$, we have $\dtv(\cC, t_{\cC}^*\pg 1 + a/\ln n \pd)\leq Ce^{-a}$, where $t_{\cC}^* := \frac{n}{\supp(\cC)}\ln n$. We always have $t_{\cC}^*\geq t_{\cC}$, therefore Proposition \ref{prop: improved ell two upper bound} proves a stronger version of Roussel’s conjecture.

Saloff-Coste conjectured, below Conjecture 9.3 in \cite{Saloff-Coste2004RandomWalksOnFiniteGroups}, that the $L^2$ \textit{cutoff time} for the non-lazy chain is $t^*_{\cC}$, for conjugacy classes $\cC$ such that $\supp(\cC) = o(n)$. The part about the lazy chain was proved to not hold in \cite{SaloffCosteZuniga2008}, due to interactions between the $L^2$ distance to stationarity and laziness. Proposition \ref{prop: improved ell two upper bound} proves a stronger version of Saloff-Coste’s conjectured upper bound for the non-lazy chain: this is proved replacing the time $t^*_{\cC} = \frac{n}{\supp(\cC)}\ln n$ by the time $t_\cC = \frac{\ln n}{\ln(n/f(\cC))}$ and with the milder support size condition $n-\supp(\cC) \gtrsim n$ instead of $\supp(\cC) = o(n)$.

\medskip

Proofs via representation theory present some benefits. One is that bounds on characters enable solving other problems; see for instance the applications of \cite{LarsenShalev2008} on covering numbers, normal subsets, and word maps, and the results of \cite{Gamburd2006} and \cite{ChmutovPittel2016} on random maps.
Another advantage is that character bounds give $L^2$ bounds for mixing times, which can be used in comparison techniques \cite{DiaconisSaloff-Coste1993comparisongroups, DiaconisSaloff-Coste1993comparisonreversiblechains} to bound the mixing times of other Markov chains. 

\medskip

The correct order of the mixing time is now understood in a wide framework. Larsen and Tiep \cite[Propositions 8.1 and 8.2]{LarsenTiep2024FiniteClassicalGroups}
recently showed that simple random walks on $\Cay(G,\cC)$ are close to being uniformly distributed after order $\frac{\ln |G|}{\ln |\cC|}$ steps (and are not before), with implicit universal constants that apply to all (non-trivial) conjugacy classes $\cC$ of all quasi-simple groups $G$ of Lie type.
It would be very interesting to obtain cutoff results also in this setting. 

\medskip

Finally, character bounds with a different flavour, not relying on the Murnaghan--Nakayama rule, were proved by Rattan and Śniady \cite{RattanSniady2008} using a character formula from Frobenius, and by Féray and Śniady \cite{FeraySniady2011} via Stanley’s character formula. These bounds are precise if representations have short rows and columns in their coding as Young diagrams, and permutations have small $O(\sqrt{n})$. This led to interesting applications to asymptotic combinatorics \cite{DousseFéray2019skewdiagramscharacters}. 

\subsection{General strategy}

Our approach is mostly combinatorial, but strongly influenced by probabilistic intuition. For conjugacy invariant random walks on symmetric groups, the limiting observable is usually the number of fixed points, which should therefore also be the most important parameter for characters bounds. The rest of the cycle structure should play a minor role, as long as the associated random walk needs at least 3 steps to mix, which is always the case if the number of fixed points grows faster than $\sqrt{n}$. 

With this in mind, we tried to split the contribution of the fixed points from the rest of the cycle structure in the Murnaghan--Nakayama rule. This led us to the iterated branching rule 
\begin{equation}\label{eq:branching rule intro non renormalisée}
    \ch^\lambda(\sigma) = \sum_{\mu \subset_{\vdash k} \lambda} \ch^\mu(\sigma^*) d_{\lambda\backslash \mu},
\end{equation}
where $k$ is the support size of $\sigma$, and $\sigma^*\in \kS_k$ has the same cycle structure as $\sigma$, except that it has no fixed point. We give more details on this branching rule in Section \ref{s:An iterated branching rule}.
The fixed-point-free part (the term $\ch^\mu(\sigma^*)$) is handled with the Larsen--Shalev character bounds \cite{LarsenShalev2008} (and their sharpening from \cite{TeyssierThévenin2025virtualdegreeswitten} for the limiting cases), while the fixed point part (the term $d_{\lambda\backslash \mu}$, which is the number of standard tableaux of the skew diagram $\lambda \backslash \mu$) is bounded using the Naruse hook length formula \cite{Naruse2014}.
This converts the problem of bounding characters into bounding the number of standard tableaux of some shapes of diagrams.
\medskip

Although initially designed with applications to mixing times in mind, our technique and some of our intermediate estimates are also applicable at other scales and for other problems. In a follow up paper \cite{Teyssier2025SkewDimensionsAndCharacters}, the second named author improves the character bounds of \cite{RattanSniady2008, FeraySniady2011} for permutations having a large support. Our technique also enables simple estimation of the characters of small conjugacy classes for balanced diagrams.

\subsection{Notation}\label{s: notation}

Recall that we denote the symmetric group of order $n$ by $\mathfrak{S}_n$ and the set of its irreducible representations by $\widehat{\mathfrak{S}_n}$. For convenience, the pieces of notation $\lambda \in \widehat{\kS_n}$ and $\lambda \vdash n$ will be used interchangeably to describe that $\lambda$ is an irreducible representation of $\kS_n$, the associated integer partition of the integer $n$, or the associated Young diagram of size $n$.

Throughout the paper, we refer to a box, i.e.\ a unit square in the plane whose vertices have integer coordinates, by the horizontal and vertical coordinates of its top-right vertex. By extension, we see sets of boxes as subsets of $\bbZ^2$. In the case of a Young diagram $\lambda$, by default, we anchor its bottom-left corner at $(0,0)$; hence $(i,j)$ denotes the $i$-th leftmost and $j$-th bottommost box in $\lambda$, and in particular $(1,1)$ denotes the bottom-left box of $\lambda$.
We also write $\mu \subset_{\vdash k} \lambda$ when $\mu$ is a sub Young diagram of $\lambda$ of size $k$, that is, a set of boxes in $\lambda$ containing $(1,1)$ and which is itself a Young diagram.

\medskip

\noindent We make use of the following notation.
Let $E\subset \bbZ^2$, and $k \in \bbZ$.

\begin{itemize}
    \item $E_k$ is the $k$-th row of $E$, that is, $\{ (i,j)\in E \mid i=k \}$,
    \item $E'_k$ is the $k$-th column of $E$, that is, $\{ (i,j)\in E \mid j=k \}$,
    \item $E^k$ is the $k$-th hook of $\lambda$, that is, $\{ (i,j)\in \lambda \mid \min(i,j) =  k \}$,
    \item $u_k(E)$ is the $k$-th column truncated at 2, that is, $\{ (i,j)\in E \mid i \geq 2 , j=k \}$.
\end{itemize}
\medskip

\noindent We also extend the notation for Frobenius coordinates to sets of boxes as follows.

\begin{itemize}
    \item $a_k(E)$ denotes the set $\{ (i,j) \in E \mid i=k, j > k \}$,
    \item $b_k(E)$ denotes the set $\{ (i,j) \in E \mid i>k, j = k \}$,
    \item $\delta_k(E) = \ag (k,k) \ad \cap E$ and $\delta(E) = \cup_{i\in\bbZ} \delta_i(E)=\ag (i,i) \mid i \in \bbZ \ad \cap E$ is the diagonal of $E$,
    \item $a_k^+(E)$ denotes $\{ (i,j) \in E \mid i=k, j \geq k \}$, that is, $a_k^+(E) = a_k(E) \cup \delta_k(E)$.
\end{itemize}

\medskip

\noindent It is often convenient to consider unions of parts of diagrams. If $\Diamond\in \ag \geq, \leq, >, < \ad$, we denote $E_{\Diamond k} = \cup_{i\Diamond k} E_i$, $E^{\Diamond k} = \cup_{i\Diamond k} E^i$, and  $u_{\Diamond k} = \cup_{i\Diamond k} u_i$.

\medskip

If $u = (i_0,j_0)\in \bbZ^2$, we denote the infinite hook in $\bbZ^2$ whose corner is $u$ by $h_u$, that is, $\{ (i,j)\in \bbZ^2 \mid [i = i_0 \text{ and } j\geq j_0] \text{ or } [i \geq i_0 \text{ and } j= j_0] \}$,
and define the \textit{hook length} of $u$ in $E$ by
\begin{equation}\label{eq: hook lengths in the plane}
    H(E , u) = \bg E \cap h_u \bd.
\end{equation}

\noindent If $E$ and $F$ are subsets of $\bbZ^2$, we define
\begin{equation}\label{eq: hook lengths and products generalized H E F}
    H(E, F) = \prod_{u\in F} H(E, u).
\end{equation}

\noindent If $i,j\in \bbZ$, we define
\begin{equation}
    \Shift_{i,j}(E) := E + (i,j) =  \ag (x+i,y+j) \mid (x,y)\in E\ad.
\end{equation}

\noindent We call a set of boxes $E$ a \textit{row} (resp. a \textit{column}) diagram of length $\ell$ if $E = \Shift_{i,j}([\ell])$ (resp. $\Shift_{i,j}([1^\ell])$) for some $i,j\in \bbZ$.

\medskip

In order to lighten the notation, we may use the same pieces of notation for sub-diagrams and their sizes, and add at times brackets and absolute values to remove the ambiguity. For example we may write $\lambda_1$ or $[\lambda_1]$ both for the set of boxes on the first row of a diagram $\lambda$, and  $\lambda_1$ or $|\lambda_1|$ for its size.

Let $\lambda$ be a Young diagram. We set $\rsym(\lambda) = n - \max(\lambda_1, \lambda_1')$. If $\lambda_1 \geq \lambda_1'$, then $\rsym(\lambda) = n-\lambda_1$, so $\rsym$ is a symmetrized version of the notion of \textit{level} defined in \cite{KleshchevLarsenTiep2022levelsymmetricgroups}, which we also refer to as level. 

Finally, we use the following asymptotic notation. We write $f(n) = O(g(n))$ or $f(n) \lesssim g(n)$ or $g(n) = \Omega(f(n))$ if there exists a constant $C>0$ such that $|f(n)| \leq C |g(n)|$ for all $n$ large enough, and $f(n) = o(g(n))$ or $f(n) \lll g(n)$
or $g(n) = \omega(f(n))$ if $f(n)/g(n) \xrightarrow[n\to \infty]{}0$. To emphasize the positivity of some of these quantities, we may write $f(n) = \omega_+(g(n))$ (resp. $f(n) = \Omega_+(g(n))$) if $f(n) = \omega(g(n))$ (resp. $f(n) = \Omega(g(n))$) and $f(n)>0$ for $n$ large enough.

\section{Preliminaries}\label{s: preliminaries}

We assume familiarity with the representation theory of symmetric groups and refer to  \cite{LivreMéliot2017RepresentationTheoryofSymmetricGroups} for a complete introduction. In this section we define most concepts that are needed in our proofs, with a special emphasis on the Naruse hook length formula.

\subsection{Skew diagrams and standard tableaux}

Let $\lambda$ and $\mu$ be Young diagrams such that $\mu \subset \lambda$. The \textit{skew diagram} $\lambda\backslash \mu$ is defined as the diagram containing the boxes that are in $\lambda$ and not in $\mu$. In particular, it is not necessarily connected.

\begin{example}
    Let $\lambda = (7,5,4,1)$ and $\mu = (5,3,2)$. Then the skew diagram $\lambda\backslash \mu$ is the diagram
\begin{center}
\begin{ytableau}
    \none   & \\
    \none   &&&& \\
    \none   &&&&& \\
    \none   &&&&&&& 
\end{ytableau}
\begin{ytableau}
    \none  \\
    \none  \\
    \none[\backslash] \\
    \none
\end{ytableau}
\begin{ytableau}
    \none \\
    \none   && \\
    \none   &&& &\none&\none&\none[=] \\
    \none   &&&&& 
\end{ytableau}
\begin{ytableau}
    \none   & \\
    \none   &\none&\none&& \\
    \none   &\none&\none&\none&& \\
    \none   &\none&\none&\none&\none&\none&& &\none[.]
\end{ytableau}
\end{center}    
\end{example}

Let $p$ be the number of boxes of $\lambda \backslash \mu$. A standard tableau of the skew Young diagram $\lambda \backslash \mu$ is a bijection $B\du \lambda \backslash \mu \mapsto [p] := \ag 1,\ldots, p\ad$, such that $B(u)\leq B(v)$ for every $u,v\in \lambda \backslash \mu$ satisfying $v \in \ag \Shift_{1,0}(u), \Shift_{0,1}(u)\ad$. It is usually represented in the skew diagram $\lambda\backslash \mu$ writing the value $B(u)$ in each box $u\in \lambda\backslash\mu$. The set of standard tableaux of $\lambda \backslash \mu$ is denoted $\ST(\lambda \backslash \mu)$, and if $\mu$ is the empty diagram we write $\ST(\lambda)$ for $\ST(\lambda \backslash \emptyset)$: this is the set of standard tableaux of the Young diagram $\lambda$.

\begin{example}
Let $\lambda =(3,3,1)$ and $\mu = (2)$. $\ST(\lambda\backslash \mu)$ consists of the following tableaux:
\begin{center}
     \begin{ytableau}
     \none &*(cyan!60)5 \\
     \none &*(orange!60)2&*(yellow!60)3&*(green!60)4\\
     \none &\none  &\none &*(red!60)1
\end{ytableau}
\begin{ytableau}
     \none &*(green!60)4 \\
     \none &*(orange!60)2&*(yellow!60)3&*(cyan!60)5\\
     \none &\none  &\none &*(red!60)1
\end{ytableau}
\begin{ytableau}
     \none &*(yellow!60)3 \\
     \none &*(orange!60)2&*(green!60)4&*(cyan!60)5\\
     \none &\none  &\none &*(red!60)1
\end{ytableau}  
\begin{ytableau}
     \none &*(cyan!60)5 \\
     \none &*(red!60)1&*(yellow!60)3&*(green!60)4\\
     \none &\none  &\none &*(orange!60)2
\end{ytableau}
\begin{ytableau}
     \none &*(green!60)4 \\
     \none &*(red!60)1&*(yellow!60)3&*(cyan!60)5\\
     \none &\none  &\none &*(orange!60)2
\end{ytableau}
\begin{ytableau}
     \none &*(yellow!60)3 \\
     \none &*(red!60)1&*(green!60)4&*(cyan!60)5\\
     \none &\none  &\none &*(orange!60)2
\end{ytableau}  

\begin{ytableau}
    \none \\
     \none &*(cyan!60)5 \\
     \none &*(red!60)1&*(orange!60)2&*(green!60)4\\
     \none &\none  &\none &*(yellow!60)3
\end{ytableau}
\begin{ytableau}
\none \\
     \none &*(green!60)4 \\
     \none &*(red!60)1&*(orange!60)2&*(cyan!60)5\\
     \none &\none  &\none &*(yellow!60)3
\end{ytableau}
\begin{ytableau}
\none \\
     \none &*(orange!60)2 \\
     \none &*(red!60)1&*(green!60)4&*(cyan!60)5\\
     \none &\none  &\none &*(yellow!60)3
\end{ytableau}  
\begin{ytableau}
\none \\
     \none &*(yellow!60)3 \\
     \none &*(red!60)1&*(orange!60)2 &*(cyan!60)5\\
     \none &\none  &\none &*(green!60)4 
\end{ytableau} 
\begin{ytableau}
\none \\
     \none &*(orange!60)2 \\
     \none &*(red!60)1&*(yellow!60)3 &*(cyan!60)5\\
     \none &\none  &\none &*(green!60)4 &\none[.] 
\end{ytableau}
\begin{ytableau}
\none \\
     \none &\none \\
     \none &\none&\none\\
     \none &\none  &\none 
\end{ytableau}

\end{center}
    
\end{example}

\subsection{Excited diagrams and the Naruse hook length formula}

The Naruse hook length formula, discovered by Naruse \cite{Naruse2014}, allows computing the number of standard tableaux $|\ST(\lambda\backslash \mu)|$ of a skew diagram $\lambda\backslash \mu$, generalizing the classical hook length formula for Young diagrams from Frame, Robinson, and Thrall \cite{FrameRobinsonThrall1954}. It involves sums of hook products over \textit{excited diagrams}, a notion introduced by Ikeda and Naruse \cite{IkedaNaruse2009exciteddiagrams}.
The Naruse hook length formula was proved algebraically and combinatorially by Morales, Pak, and Panova \cite{MoralesPakPanova2018I}, and a bijective proof was found by Konvalinka \cite{Konvalinka2020naruse}.

\label{ssec:Naruse hook length formula}
\subsubsection{Excited diagrams}

Let $\lambda \subset \bbZ^2$ be a (shift of a) Young diagram, and $E$ be a subset of $\lambda$.
We say that a box $u \in E$ of coordinates $(i,j)$ is \textit{free} if the box of coordinates $(i+1,j+1)$ is in $\lambda$ and none of the boxes $(i+1,j), (i,j+1)$, and $ (i+1,j+1)$ is in $E$. We say that $u$ is \textit{blocked} if it is not free.

\begin{example}
Let $\lambda = [5,5,5,2]$ and $\mu = [3,2,1,1]$.
In the figure below, we colour the boxes of $\mu \subset \lambda$ in green if they are free and in red if they are blocked:

\begin{center}
\begin{ytableau}
    \none   &*(red)& \\
    \none   &*(red)&&&& \\
    \none   &*(red)&*(green)&&&\\
    \none   &*(red)&*(red)&*(green)&& &\none[.]
\end{ytableau}
\end{center}
\end{example}

An \textit{excitation} consists of moving one of the free boxes of $E$ in $\lambda$ towards north-east. If we apply an excitation, we say that we \textit{excite} (a box of) $E$ in $\lambda$.
A subset of the boxes of $\lambda$ that can be obtained from $E\subset \lambda$ by applying a (possibly empty) sequence of excitations to $E$ is called an \textit{excited diagram} of $E$ in $\lambda$. We denote by $\dgex(\lambda, E)$ the set of excited diagrams of $E$ in $\lambda$.

\begin{example}
Let $\lambda = [3,3,3]$ and $\mu = [1]$. Then $\dgex(\lambda, \mu)$ consists of three diagrams
\begin{center}
\begin{ytableau}
    \none   &&& \\
    \none   &&& \\
    \none   &*(green)&& &\none &\none
\end{ytableau}
\begin{ytableau}
    \none   &&& \\
    \none   &&*(green)& \\
    \none   &&& &\none &\none
\end{ytableau}
\begin{ytableau}
    \none   &&&*(red) \\
    \none   &&& \\
    \none   &&& &\none[.]
\end{ytableau}
\end{center}
Here, as before and below, the green boxes are free and the red are blocked.
In addition, $\dgex(\lambda,\{ (2,2) \})$ consists of the last two diagrams.
\end{example}

\begin{example}
Let $\lambda = [3,3,3]$ and $\mu = [2,1]$. Then $\dgex(\lambda, \mu)$ consists of the five diagrams
\begin{center}
\begin{ytableau}
    \none   &&& \\
    \none   &*(green)&& \\
    \none   &*(red)&*(green)& &\none 
\end{ytableau}
\begin{ytableau}
    \none   &&& \\
    \none   &*(green)&&*(red) \\
    \none   &*(red)&& &\none 
\end{ytableau}
\begin{ytableau}
    \none   &&*(red)& \\
    \none   &&& \\
    \none   &*(red)&*(green)& &\none 
\end{ytableau}
\begin{ytableau}
    \none   &&*(red)& \\
    \none   &&&*(red) \\
    \none   &*(green)&& &\none 
\end{ytableau}
\begin{ytableau}
    \none   &&*(red)& \\
    \none   &&*(red)&*(red) \\
    \none   &&& &\none[.]
\end{ytableau}
\end{center}
In the 4-th diagram, the corner box is now free, after the two other (initially green) boxes were moved away from it.
\end{example}

\begin{example}
Let $\lambda = [3,3,3,3,3]$ and $\mu = [1,1,1]$. Then $\dgex(\lambda, \mu)$ consists of the ten diagrams
\begin{center}
\begin{ytableau}
    \none &&& \\
    \none &&& \\
    \none &*(green)&& \\
    \none &*(red)&& \\
    \none &*(red)&&
\end{ytableau}
\begin{ytableau}
    \none &&& \\
    \none &&*(green)& \\
    \none &&& \\
    \none &*(green)&& \\
    \none &*(red)&&
\end{ytableau}
\begin{ytableau}
    \none &&&*(red) \\
    \none &&& \\
    \none &&& \\
    \none &*(green)&& \\
    \none &*(red)&& 
\end{ytableau}
\begin{ytableau}
    \none &&& \\
    \none &&*(green)& \\
    \none &&*(red)& \\
    \none &&& \\
    \none &*(green)&&
\end{ytableau}
\begin{ytableau}
    \none &&&*(red) \\
    \none &&& \\
    \none &&*(green)& \\
    \none &&& \\
    \none &*(green)&& &\none
\end{ytableau}
\end{center}

\begin{center}
\begin{ytableau}
    \none &&&*(red) \\
    \none &&&*(red) \\
    \none &&& \\
    \none &&& \\
    \none &*(green)&&
\end{ytableau}
\begin{ytableau}
    \none &&& \\
    \none &&*(green)& \\
    \none &&*(red)& \\
    \none &&*(red)& \\
    \none &&&
\end{ytableau}
\begin{ytableau}
    \none &&&*(red) \\
    \none &&& \\
    \none &&*(green)& \\
    \none &&*(red)& \\
    \none &&&
\end{ytableau}
\begin{ytableau}
    \none &&&*(red) \\
    \none &&&*(red) \\
    \none &&& \\
    \none &&*(green)& \\
    \none &&&
\end{ytableau}
\begin{ytableau}
    \none &&&*(red) \\
    \none &&&*(red) \\
    \none &&&*(red) \\
    \none &&& \\
    \none &&& &\none[.]
\end{ytableau}
\end{center}
\end{example}

\begin{example}
Let $\lambda = [8,3,3,1]$ and $\mu = [6,2,1]$. Then $\dgex(\lambda, \mu)$ consists of the two diagrams
\begin{center}
\begin{ytableau}
    \none   & \\
    \none   &*(red)&& \\
    \none   &*(red)&*(green)&\\
    \none   &*(red)&*(red)&*(red)&*(red)&*(red)&*(red)&& &\none[]
\end{ytableau}
\begin{ytableau}
    \none   & \\
    \none   &*(red)&&*(red) \\
    \none   &*(red)&&\\
    \none   &*(red)&*(red)&*(red)&*(red)&*(red)&*(red)&& &\none[.]
\end{ytableau}
\end{center}
\end{example}

\subsubsection{The Naruse hook length formula}

Recall that if $\lambda$ is an irreducible representation of $\kS_n$ for some $n$, then $d_\lambda = |\ST(\lambda)|$, and that the hook length formula can be written as 
\begin{equation}\label{eq:hook length formula}
    |\ST(\lambda)| = \frac{|\lambda|!}{H(\lambda,\lambda)}.
\end{equation}
If $\lambda$ and $\mu$ are partitions and $\mu \subset \lambda$ we denote, by analogy with the hook length formula, 
\begin{equation}
    d_{\lambda \backslash \mu} := |\ST(\lambda\backslash \mu)|,
\end{equation}
and call it \textit{dimension} of $\lambda\backslash \mu$ by extension.
If $u$ is a box in $\lambda$, we denote by $H(\lambda, u)$ its hook length. If $E \subset \lambda$ is a subset of the boxes of $\lambda$ recall that we denote by $H(\lambda, E) = \prod_{u\in E} H(\lambda, u)$ the product of the hook lengths (calculated in $\lambda$) of the boxes in $E$.

In \cite[Theorem 1.2]{MoralesPakPanova2018I}, the Naruse hook length formula is written as follows: for any partitions $\lambda$ and $\mu \subset \lambda$, we have
\begin{equation}\label{eq:naruse as in MPP}
    d_{\lambda\backslash \mu} = |\lambda\backslash\mu|! \sum_{E\in \cE(\lambda,\mu)} \frac{1}{H(\lambda, \lambda \backslash E)}.
\end{equation}
This is a generalization of the hook length formula, since \eqref{eq:hook length formula} is exactly \eqref{eq:naruse as in MPP} applied with $\mu = \emptyset$.

For our purposes, it is convenient to rewrite this formula differently.
For any partition $\lambda$ and any subset $\mu \subset \lambda$, define
\begin{equation}\label{eq:def excited sums}
    S(\lambda, \mu) = \sum_{E \in \dgex(\lambda, \mu)} H(\lambda, E).
\end{equation}
We use the falling factorial notation: for any integers $0\leq p\leq m$,
\begin{equation}
    m^{\downarrow p} := \frac{m!}{p!} = m (m-1)\cdots(m-p+1).
\end{equation}
Combining \eqref{eq:naruse as in MPP} and the hook length formula, we can rewrite, if $\mu\subset \lambda$ is a Young diagram,
\begin{equation} \label{e:NHLF rewritten}
    \frac{d_{\lambda\backslash \mu}}{d_\lambda} = \frac{S(\lambda,\mu)}{|\lambda|^{\downarrow |\mu|}}.
\end{equation}
We also refer to this identity as the Naruse hook length formula.

\begin{example}
    Consider the diagrams $\lambda = [3,3,1]$ and $\mu = [2]$.
Then there are three excited diagrams of $\mu$ in $\lambda$:
\begin{center}
    \begin{ytableau}
        \none & \\
        \none &&& \\
        \none &*(orange)5&*(orange)3& &\none 
    \end{ytableau}
    \begin{ytableau}
        \none & \\
        \none &&&*(orange)1 \\
        \none &*(orange)5&& &\none 
    \end{ytableau}
    \begin{ytableau}
        \none & \\
        \none &&*(orange)2&*(orange)1 \\
        \none &&& &\none[,] 
    \end{ytableau}
\end{center}
where we wrote the hook lengths of the orange boxes inside these boxes.
It follows that
\begin{equation}
    S(\lambda, \mu) = 5\cdot 3 + 5\cdot 1 + 2\cdot 1 = 22.
\end{equation}
Therefore, by \eqref{e:NHLF rewritten}, we have $\frac{d_{\lambda\backslash\mu}}{d_\lambda} = \frac{22}{7^{\downarrow 2}} = \frac{11}{21}$.
One can compute $d_\lambda = 21$ with the (standard) hook length formula, from which we deduce that $d_{\lambda\backslash \mu} = 11$.
\end{example}

Considering only some constraints on boxes can only increase the number of excited diagrams. We therefore have the following simple but very practical lemma.
\begin{lemma}\label{lem: fragmentation bound for excited sums}
    Let $\lambda$ be a Young diagram. Let $E\subset \lambda$ be a subset of the boxes of $\lambda$, and let $E_1, E_2,\ldots, E_p$ be disjoint subsets of $E$. Then
    \begin{equation}
      |\cE(\lambda, E)| \leq \prod_{i=1}^p |\cE(\lambda, E_i)| \quad \text{ and } \quad S(\lambda, E) \leq \prod_{i=1}^p S(\lambda, E_i).
    \end{equation}
\end{lemma}

\subsection{Sliced hook products}

In order to approximate hook products, it is convenient to consider the contribution of the hook lengths only from some parts of the diagrams. This idea appeared in the notion of virtual degree 
\begin{equation}\label{eq: def virtual degree}
    D(\lambda) = \frac{(n-1)!}{\prod_i a_i(\lambda)! b_i(\lambda)!}
\end{equation}
of a Young diagram $\lambda$ introduced by Larsen and Shalev \cite{LarsenShalev2008}. 
In \cite{TeyssierThévenin2025virtualdegreeswitten}, we extended this idea to other ways to slice diagrams and to hook products over subsets of the boxes of a diagram $\lambda$. Recall from \eqref{eq: hook lengths and products generalized H E F} the definition of $H(E, F)$ for $E,F\subset \bbZ^2$. \textit{Sliced hook products} in a diagram $\lambda$ are important in the proofs of the present paper. We recall the definition from \cite[Section 3]{TeyssierThévenin2025virtualdegreeswitten}.

\begin{definition}
    Let $\lambda$ be a Young diagram and $P = \ag \nu_1, \nu_2, \ldots, \nu_r\ad$ be a set partition of $\lambda$, that is, the sets $\nu_i$ are non-empty, disjoint, and their union is $\lambda$. Let $E$ be a subset of the boxes of $\lambda$. The
\textbf{sliced hook product} over the boxes of $E$ in the diagram $\lambda$ is defined as 
\begin{equation}
    H^{*P}(\lambda, E) := \prod_{i= 1}^r H(\nu_i, \nu_i\cap E).
\end{equation}
\end{definition}
 
Particularly important for the present paper are
\begin{itemize}
    \item the stairs slicing as in Figure \ref{fig:stairs slicing}, corresponding to the partition $P = \cup_{i\geq 1} \ag a_i^+(\lambda), b_i(\lambda) \ad$ of $\lambda$ (recall that the $a_i^+(\lambda) = a_i(\lambda) \cup \delta_i(\lambda)$ includes the diagonal box);
    \item the (depth-$M$) triple slicing, defined later in Definition \ref{def: deep triple slicing} and represented in Figure \ref{fig:triple slicing};
    \item the first-row slicing as in Figure \ref{fig:lambdadown1slicing}, corresponding to the partition $P = \ag \lambda_1, \lambda_{\geq 2} \ad$ of $\lambda$. 
\end{itemize}

\begin{figure}[!ht]
    \begin{center}
    \begin{ytableau}
        \none &\none[\textcolor{orange!90}{b_1(\lambda)}] \\
        \none &*(orange!90)1 &\none &\none[\textcolor{orange!60}{b_2(\lambda)}]\\
        \none &*(orange!90)2&*(orange!60)1\\
        \none &*(orange!90)3&*(orange!60)2\\
        \none &*(orange!90)4&*(orange!60)3&*(cyan!30)4&*(cyan!30)3&*(cyan!30)2&*(cyan!30)1 & \none &\none[\textcolor{cyan!30}{a_3^+(\lambda)}] \\
        \none &*(orange!90)5&*(cyan!60)7&*(cyan!60)6&*(cyan!60)5&*(cyan!60)4&*(cyan!60)3&*(cyan!60)2&*(cyan!60)1 & \none &\none[\textcolor{cyan!60}{a_2^+(\lambda)}]\\
        \none &*(cyan!90)14&*(cyan!90)13&*(cyan!90)12&*(cyan!90)11&*(cyan!90)10&*(cyan!90)9&*(cyan!90)8&*(cyan!90)7&*(cyan!90)6&*(cyan!90)5&*(cyan!90)4&*(cyan!90)3&*(cyan!90)2&*(cyan!90)1 & \none &\none[\textcolor{cyan!90}{a_1^+(\lambda)}] 
    \end{ytableau}
\end{center}

\caption{The stairs slicing for $\lambda = [14,8,6,2,2,1]$. Numbers in the boxes correspond to the hook lengths after slicing.}
\label{fig:stairs slicing}
\end{figure}

In \cite[Proposition 3.7]{TeyssierThévenin2025virtualdegreeswitten}, we proved precise bounds on slicings. 
We now rewrite these results as bounds on ratios of hook products, with a minor rewriting of the upper bounds.

\begin{lemma}\label{lem: slicing from ALS}
    Let $\mu\subset\lambda$ be (non-empty) Young diagrams.
\begin{enumerate}
    \item We have 
    \begin{equation}
        \frac{H(\lambda, \mu_1)}{H(\lambda_1, \mu_1)} \leq e^{3\sqrt{|\lambda|}}.
    \end{equation}
     \item Assume that $r:= |\lambda_{\geq 2}|\leq |\lambda|/4$. Then
    \begin{equation}
        \frac{H(\lambda, \mu_1)}{H(\lambda_1, \mu_1)} \leq e^{2r/|\lambda|}.
    \end{equation}
\end{enumerate}
\end{lemma}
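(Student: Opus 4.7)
The plan is to take logarithms and reduce both parts to bounding the same elementary sum. For a box $u=(1,j)\in\mu_1$, the arm in $\lambda$ coincides with the arm in the one-row diagram $\lambda_1$, while the leg drops from $\lambda'_j-1$ to $0$. Hence $H(\lambda,u)=\lambda_1-j+\lambda'_j$ and $H(\lambda_1,u)=\lambda_1-j+1$, so
\begin{equation*}
\ln\frac{H(\lambda,\mu_1)}{H(\lambda_1,\mu_1)} \;=\; \sum_{j=1}^{|\mu_1|}\ln\!\left(1+\frac{\lambda'_j-1}{\lambda_1-j+1}\right),
\end{equation*}
and only the indices $j\le\lambda_2$ contribute a non-zero term.

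Part~2 follows quickly from $\ln(1+x)\le x$. The hypothesis $r\le|\lambda|/5$ forces $\lambda_1\ge 4|\lambda|/5$ and $\lambda_2\le r\le|\lambda|/5$, so for every contributing index $j\le\lambda_2$ one has $\lambda_1-j+1\ge\lambda_1-\lambda_2\ge 3|\lambda|/5>|\lambda|/2$. The sum is therefore at most $(2/|\lambda|)\sum_j(\lambda'_j-1)=2r/|\lambda|$, using the identity $\sum_{j=1}^{\lambda_1}(\lambda'_j-1)=|\lambda|-\lambda_1=r$. This yields the claim.

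Part~1 is the main obstacle, since no constraint is imposed on $r$; for a tall, narrow diagram the ratio $(\lambda'_j-1)/(\lambda_1-j+1)$ can be arbitrarily large, and $\ln(1+x)\le x$ becomes too lossy. I would split on the aspect ratio of $\lambda$. When $\lambda_1\ge 2\sqrt{|\lambda|}$, the denominators are comfortable, and combining $\lambda'_j\le|\lambda|/j$ (from the embedded rectangle $[\lambda'_j]\times[j]\subset\lambda$) with Abel summation bounds the sum by $O(\sqrt{|\lambda|})$. When $\lambda_1<2\sqrt{|\lambda|}$, the product has at most $2\sqrt{|\lambda|}$ factors; I would pick a threshold $j^{*}$, use $\ln(1+x)\le x$ on indices where $\lambda'_j\le\lambda_1-j+1$ and $\ln(1+x)\le\ln(2x)$ on the others, and then exploit the structural constraints $\lambda_1+\lambda'_1-1\le|\lambda|$ and $\sum_j\lambda'_j=|\lambda|$ to absorb the logarithmic contributions.

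The delicate point is calibrating the thresholds carefully enough to recover the clean constant $3$. Since the statement is advertised as a rewriting of \cite[Proposition 3.7]{TeyssierThévenin2025virtualdegreeswitten}, the cleanest route is to invoke that slicing bound directly and translate it into hook-product language: the first-row slicing of Figure~\ref{fig:lambdadown1slicing} replaces each hook $H(\lambda,u)$ for $u$ in the first row by $H(\lambda_1,u)$, so the slicing estimate on the sliced-to-original ratio is exactly the reciprocal of the bound we seek.
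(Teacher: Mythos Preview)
Your Part~(b) argument is correct and is essentially the same computation the paper carries out: the paper first notes the trivial monotonicity $\frac{H(\lambda,\mu_1)}{H(\lambda_1,\mu_1)}\le\frac{H(\lambda,\lambda_1)}{H(\lambda_1,\lambda_1)}$ (each factor in the product is $\ge 1$), then cites \cite[Proposition 3.7(c)]{TeyssierThévenin2025virtualdegreeswitten} to get $\frac{H(\lambda,\lambda_1)}{H(\lambda_1,\lambda_1)}\le 1+\frac{r}{\lambda_1-r+1}$, bounds $\lambda_1-r+1\ge 3|\lambda|/5$, and uses $1+x\le e^x$. Your explicit sum is precisely what that cited bound unpacks to.

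For Part~(a), your case-splitting sketch is, as you concede, incomplete, and the paper makes no attempt at a self-contained argument either: it simply invokes \cite[Proposition 3.7(d)]{TeyssierThévenin2025virtualdegreeswitten}, which gives the sharper bound $\frac{H(\lambda,\lambda_1)}{H(\lambda_1,\lambda_1)}\le e^{3\sqrt{r}}$ with $r=|\lambda_{\ge 2}|$, and then uses $r\le|\lambda|$. So your final sentence---invoke the slicing bound directly---is exactly the paper's route, and the aspect-ratio analysis you outline is not needed here.
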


\begin{proof}

Using \cite[Lemma 3.4 (ii) and Proposition 3.7 (d)]{TeyssierThévenin2025virtualdegreeswitten}, we have
\begin{equation}
\frac{H(\lambda, \mu_1)}{H(\lambda_1, \mu_1)} \leq \frac{H(\lambda, \lambda_1)}{H(\lambda_1, \lambda_1)} \leq e^{3\sqrt{r}} \leq e^{3\sqrt{|\lambda|}},
\end{equation}
which proves (a). 
On the other hand, under the assumption $r\leq |\lambda|/4$ we have $\lambda_1 - r + 1  = |\lambda| - 2 r + 1 \geq |\lambda|/2$. Therefore, by \cite[Lemma 3.4 (ii) and Proposition 3.7 (c)]{TeyssierThévenin2025virtualdegreeswitten}, we have
\begin{equation}
    \frac{H(\lambda, \mu_1)}{H(\lambda_1, \mu_1)} \leq \frac{H(\lambda, \lambda_1)}{H(\lambda_1, \lambda_1)} \leq 1+\frac{r}{\lambda_1-r+1}
    \leq 1+ 2 \frac{r}{|\lambda|}
    \leq e^{2r/|\lambda|}.
\qedhere
\end{equation}
\end{proof}

\subsection{Bounds on characters of fixed-point-free permutations}
Larsen and Shalev \cite{LarsenShalev2008} introduced the notions of virtual degree $D(\lambda)$,
recalled previously in \eqref{eq: def virtual degree},
of a diagram $\lambda$ and orbit growth exponents $E(\sigma)$ of a permutation $\sigma$, and derived precise bounds on the characters $|\ch^\lambda(\sigma)|$ as a function of these quantities (for permutations $\sigma$ that have few fixed points and all diagrams $\lambda$). 
The quantity $E(\sigma)$ is defined as follows.

\begin{definition}
    Let $n\geq 1$ and $\sigma\in\mathfrak{S}_n$. Denote by $f_i(\sigma)$ the number of cycles of length $i\geq 1$ of $\sigma$, and for $k\geq 1$, set $\Sigma_k = \Sigma_k(\sigma):= \sum_{1\leq i\leq k} if_i(\sigma)$. We define the orbit growth sequence $(e_i)_{i \geq 1}$ by $n^{e_1+\ldots+e_i}=\max\pg \Sigma_i, 1 \pd$ for all $i \geq 1$, and set
\begin{equation}
\label{eq:orbit growth exponent}
    E(\sigma) = \sum_{i\geq 1} \frac{e_i}{i}.
\end{equation}
\end{definition}

In \cite{TeyssierThévenin2025virtualdegreeswitten}, we improved the approximation on virtual degrees of \cite[Theorem 2.1]{LarsenShalev2008} from $D(\lambda) = d_{\lambda}^{1+o(1)}$ to $D(\lambda) = d_{\lambda}^{1+O(1/\ln n)}$. This led to the following improved bound on characters of fixed-point-free permutations.

\begin{lemma}[{\cite[Theorem 1.6]{TeyssierThévenin2025virtualdegreeswitten}}]\label{lem:ALS improved character bound}
    There exists a universal constant $\Cvirt$ such that for every $n\geq 2$, any permutation $\sigma \in \mathfrak{S}_n$ and any integer partition $\lambda\vdash n$, we have
    \begin{equation}
        \bg \ch^\lambda(\sigma)\bd \leq d_\lambda^{\beta(\sigma)},
    \quad\text{where}\quad
        \beta(\sigma) = \pg 1+ \frac{\Cvirt}{\ln n} \pd E(\sigma).
    \end{equation}
In particular, when $\sigma$ is fixed-point-free, we have $E(\sigma) \leq \frac{1}{2}$ and therefore 
\begin{equation}
        \bg \ch^\lambda(\sigma)\bd \leq d_\lambda^{\frac{1}{2}\pg 1 + \frac{\Cvirt}{\ln n} \pd}.
    \end{equation}
\end{lemma}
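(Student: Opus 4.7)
The plan is to invoke two ingredients already present in the literature and combine them. The first ingredient is the Larsen--Shalev character estimate, obtained by unpacking the Murnaghan--Nakayama rule \eqref{e:MN} via the triangle inequality \eqref{eq: bound characters triangle inequality number of ribbon tableaux} and peeling the cycles of $\sigma$ in increasing order of length; this yields a bound of the form $\bg \ch^\lambda(\sigma)\bd \leq D(\lambda)^{E(\sigma)}$, with $D(\lambda)$ the virtual degree from \eqref{eq: def virtual degree} and $E(\sigma)$ the orbit growth exponent from \eqref{eq:orbit growth exponent}. The second ingredient is the sharpened comparison $D(\lambda) \leq d_\lambda^{1 + \Cvirt/\ln n}$ proved in \cite{TeyssierThévenin2025virtualdegreeswitten}, which refines the earlier asymptotic $D(\lambda) = d_\lambda^{1+o(1)}$ of Larsen--Shalev.

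Substituting the second into the first then gives
\[ \bg \ch^\lambda(\sigma)\bd \leq D(\lambda)^{E(\sigma)} \leq d_\lambda^{(1+\Cvirt/\ln n)E(\sigma)} = d_\lambda^{\beta(\sigma)}, \]
which is the stated main bound.

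For the fixed-point-free case, I would verify the inequality $E(\sigma) \leq 1/2$ directly from the definition. If $f_1(\sigma) = 0$, then $\Sigma_1(\sigma) = 0$, so $e_1 = 0$ by the convention $n^{e_1} = \max(\Sigma_1, 1)$. Since $\Sigma_i \leq n$ for every $i$, we have $e_1 + \cdots + e_i \leq 1$ for all $i$, and in particular $\sum_{j \geq 2} e_j \leq 1$. Therefore
\[ E(\sigma) = \sum_{i \geq 2} \frac{e_i}{i} \leq \frac{1}{2}\sum_{i \geq 2} e_i \leq \frac{1}{2}, \]
and the second display in the lemma follows by plugging this into the first.

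The main obstacle is not the combination itself, which is essentially formal, but the refined approximation $D(\lambda) \leq d_\lambda^{1+O(1/\ln n)}$. The classical Larsen--Shalev argument yields only an $o(1)$ error in the exponent, and upgrading this to $O(1/\ln n)$ requires quantitative control on ratios of hook products after various diagram slicings, of the type recorded in Lemma~\ref{lem: slicing from ALS}. This quantitative analysis is the content of the companion paper \cite{TeyssierThévenin2025virtualdegreeswitten} and cannot be extracted from the classical proof by soft means; hence the lemma here is invoked as a black box.
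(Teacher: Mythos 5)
The paper gives no proof of this lemma; it is invoked as a citation of Theorem~1.6 from the companion paper, so the ``proof'' in the paper is simply that citation, and your treatment is the same: you correctly identify that the main inequality $|\ch^\lambda(\sigma)| \le d_\lambda^{\beta(\sigma)}$ is a black box (the combination of $|\ch^\lambda(\sigma)| \le D(\lambda)^{E(\sigma)}$ with the refined comparison $D(\lambda) \le d_\lambda^{1+\Cvirt/\ln n}$), and you supply a correct, direct verification of the elementary claim $E(\sigma) \le 1/2$ from the definition of the orbit growth exponents, using $e_1 = 0$ (since $\Sigma_1 = 0$ for a fixed-point-free permutation), $e_i \ge 0$, and $\sum_i e_i \le 1$. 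One small factual slip: Larsen and Shalev peel the ribbons in \emph{decreasing} order of cycle length, not increasing; since that remark is only background colour for a result you are treating as a black box, it has no bearing on correctness.
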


\subsection{Some classical bounds}

We collect some elementary/classical lemmas that we will use throughout the article.
\begin{lemma}
\label{lem:dlambda plus petit que racine de factorielle n}
Let $\lambda \vdash n$. Then, $d_\lambda \leq \sqrt{n!}$
\end{lemma}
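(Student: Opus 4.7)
The plan is to invoke the classical sum-of-squares identity from representation theory. Recall that for any finite group $G$, the sum $\sum_{\rho \in \widehat{G}} d_\rho^2$ equals $|G|$, obtained by decomposing the regular representation of $G$ into irreducibles (each irreducible $\rho$ appears with multiplicity $d_\rho$, so $|G| = \sum_\rho d_\rho \cdot d_\rho$). Specializing to $G = \kS_n$, this gives the identity
\begin{equation}
    \sum_{\mu \vdash n} d_\mu^2 = n!.
\end{equation}

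Since every term in the sum is non-negative, for any fixed $\lambda \vdash n$ we can bound the single term $d_\lambda^2$ by the whole sum:
\begin{equation}
    d_\lambda^2 \leq \sum_{\mu \vdash n} d_\mu^2 = n!.
\end{equation}
Taking square roots yields the desired inequality $d_\lambda \leq \sqrt{n!}$.

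There is essentially no obstacle here, since the result is a direct consequence of a well-known identity; the only thing to decide is whether to cite this identity as classical (for instance from \cite{LivreMeliot2017}) or to briefly recall its origin from the regular representation. One could alternatively prove it via the hook length formula by bounding $H(\lambda, \lambda) \geq \sqrt{n!}$, but this would be unnecessarily more involved.
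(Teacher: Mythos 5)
Your proof is correct and is exactly the argument the paper uses: it cites the identity $\sum_{\mu \vdash n} d_\mu^2 = n!$ and observes that a single non-negative term is bounded by the sum.
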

\begin{proof}
     We have the identity $\sum_{\nu \vdash n} d_\nu^2 = n!$ by the Fourier isomorphism (see \cite[Section 1.3] {LivreMéliot2017RepresentationTheoryofSymmetricGroups}). Therefore $d_\lambda \leq \sqrt{\sum_{\nu \vdash n} d_\nu^2} = \sqrt{n!}$. 
\end{proof}

The following lemma is a classical counting argument, see for instance \cite[Fact 3.D.1 on Pages 39-40]{LivreDiaconis1988}. We give a version with multiple diagrams and a computational proof.
\begin{lemma}\label{lem: binomial bound number standard tableaux fragmented}
Let $\lambda$ be a Young diagram and let $p\geq 2$ be an integer. Write $\lambda$ as a disjoint union $\lambda(1)\cup \cdots \cup \lambda(p)$, such that for each $1\leq j \leq p$, $\cup_{1\leq i\leq j}\lambda(i)$ is a Young diagram. Then
\begin{equation}
    \frac{d_\lambda}{|\lambda|!} \leq \prod_{i=1}^p \frac{d_{\lambda(i)}}{|\lambda(i)|!}.
\end{equation}
\end{lemma}
\begin{proof}
    First assume that $p=2$. Then $\lambda\backslash\lambda(1) = \lambda(2)$. By the hook length formula, the Naruse hook length formula \eqref{e:NHLF rewritten}, and since $S(\lambda, \lambda(1)) \geq H(\lambda, \lambda(1)) \geq H({\lambda(1)}, {\lambda(1)})$, we have
\begin{equation}
    \frac{d_{\lambda(1)}d_{\lambda(2)}}{d_{\lambda}} \frac{|\lambda|!}{|\lambda(1)|!|\lambda(2)|!} = \frac{d_{\lambda(1)}}{|\lambda(1)|!}\pg \frac{d_{\lambda\backslash\lambda(1)}}{d_{\lambda}} |\lambda|^{\downarrow |\lambda(1)|}\pd = \frac{1}{H(\lambda(1),\lambda(1))} S(\lambda, \lambda(1)) \geq 1,
\end{equation}
which is the desired result for $p=2$. It follows by induction that the result holds for any $p\geq 2$.
\end{proof}

We also recall classical bounds on factorials and binomial coefficients.
\begin{lemma}\label{lem: comparaison puissances et puissances factorielles}
\begin{enumerate}
    \item Let $k\geq 0$ be an integer. Then $k! \geq (k/e)^k$.
    \item Let $n$ and $k$ be integers with $0 \le k \le n$.
    Then,
    \begin{equation}
        \frac{n^k}{n^{\downarrow k}} \le e^k
    \quad\text{and}\quad
        \binom nk \le \biggl(\frac{en}{k}\biggr)^k.
    \end{equation}
\end{enumerate}
\end{lemma}

\begin{proof}
\begin{enumerate}
    \item 
    This holds for $k=0$, and for $k\geq 1$ follows from rearranging the sum--integral inequality
    \begin{equation}
        \ln(k!)
    =
        \sum_{r=1}^k
        \ln r
    \ge
        \int_1^k
        \ln x
        \, dx
    =
        k \ln k - k + 1
    \ge
        k \ln k - k.
    \end{equation}

    \item 
    By (a),
    \(
        \binom nk
    =
        n^{\downarrow k} / k!
    \le
        n^k / (k/e)^k
    =
        (en/k)^k.
    \)
    Recall that
    \(
        \binom nk
    =
        n^{\downarrow k}/k^{\downarrow k}
    \geq
        (n/k)^k.
    \)
    So,
    \begin{equation}
        \frac{n^k}{n^{\downarrow k}}
        \bigg/
        \frac{k^k}{k^{\downarrow k}}
    =
        \biggl(\frac nk\biggr)^k
        \bigg/
        \binom nk
    \le
        1,
    \quad\text{and hence}\quad
        \frac{n^k}{n^{\downarrow k}}
    \le
        \frac{k^k}{k!}
    \le
        e^k.
    \qedhere
    \end{equation}
\end{enumerate}
\end{proof}

For $n\geq 1$ let $p(n)$ be the number of integer partitions of the integer $n$.
\begin{lemma}\label{lem:def Cpart}
    There exists a constant $\Cpart$ such that $p(n) \leq e^{\Cpart\sqrt{n}}$ for every $n\geq 0$.
\end{lemma}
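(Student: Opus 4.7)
The plan is to bound $p(n)$ using a standard Chernoff-type argument on the Euler generating function
$$\sum_{n\geq 0} p(n) x^n = \prod_{k\geq 1} \frac{1}{1-x^k}, \qquad x\in (0,1).$$
Since all coefficients are non-negative, for every $x\in(0,1)$ we have $p(n) x^n \leq \prod_{k\geq 1}(1-x^k)^{-1}$, so
$$\ln p(n) \leq -n\ln x - \sum_{k\geq 1} \ln(1-x^k).$$

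To control the right-hand side I would expand $-\ln(1-y) = \sum_{j\geq 1} y^j/j$ and swap sums:
$$-\sum_{k\geq 1}\ln(1-x^k) = \sum_{j\geq 1}\frac{1}{j}\sum_{k\geq 1} x^{jk} = \sum_{j\geq 1}\frac{1}{j}\cdot\frac{x^j}{1-x^j}.$$
Now set $x = e^{-t}$ with $t>0$. Using the elementary bound $e^{jt}-1 \geq jt$, each summand is at most $\frac{1}{j^2 t}$, so
$$-\sum_{k\geq 1}\ln(1-e^{-kt}) \leq \frac{1}{t}\sum_{j\geq 1}\frac{1}{j^2} = \frac{\pi^2}{6t}.$$
Combining the two inequalities,
$$\ln p(n) \leq nt + \frac{\pi^2}{6t}.$$

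Finally I would optimize in $t$: the right-hand side is minimized at $t^\ast = \pi/\sqrt{6n}$ (for $n\geq 1$), giving $\ln p(n) \leq \pi\sqrt{2n/3}$. The case $n=0$ is trivial since $p(0)=1$. Taking $\Cpart = \pi\sqrt{2/3}$ (or any larger constant) yields the claim. There is no real obstacle here: this is essentially the upper half of the classical Hardy--Ramanujan estimate, and since only a crude exponential bound in $\sqrt{n}$ is required, the elementary Chernoff argument above suffices without any refined analysis of the dilogarithm-type sum.
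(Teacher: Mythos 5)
Your proof is correct, and it takes a genuinely different route from the paper. The paper simply cites the Hardy--Ramanujan asymptotic $p(n)\sim e^{\pi\sqrt{2n/3}}/(4n\sqrt{3})$ and reads off the bound, whereas you give a self-contained elementary derivation via the Euler product and a Chernoff-style optimization in $t$. Every step checks out: the coefficient inequality $p(n)x^n\leq\prod_{k\geq 1}(1-x^k)^{-1}$, the double-sum rearrangement giving $\sum_j \frac{1}{j}\frac{x^j}{1-x^j}$, the bound $\frac{1}{e^{jt}-1}\leq\frac{1}{jt}$, the $\zeta(2)=\pi^2/6$ evaluation, and the optimization at $t^\ast=\pi/\sqrt{6n}$ yielding $\ln p(n)\leq\pi\sqrt{2n/3}$. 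What the elementary argument buys is that it is fully self-contained and even recovers the sharp exponential constant $\Cpart=\pi\sqrt{2/3}$ without invoking the circle method or the full asymptotic; what citing Hardy--Ramanujan buys is brevity and the implicit observation that this constant is optimal (up to polynomial factors). For the purposes of this lemma only a crude $e^{O(\sqrt{n})}$ bound is needed, so either approach is entirely adequate.
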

\begin{proof}
    This follows immediately from the asymptotic equivalent $p(n) \sim e^{\pi\sqrt{2n/3}}/(4n\sqrt{3})$.
\end{proof}
We recall that the conjugate diagram of a diagram $\lambda$ is denoted by $\lambda'$, and the sign of a permutation $\sigma$ is denoted by $\sgn(\sigma)$.

\begin{lemma}\label{lem: identity characters sign transpose conjugation}
    For any $n\geq 1$, $\sigma\in \kS_n$, and $\lambda \vdash n$, we have
\begin{equation}
    \ch^{\lambda'}(\sigma) = \sgn(\sigma)\ch^\lambda(\sigma).
\end{equation}
\end{lemma}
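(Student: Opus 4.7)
The plan is to apply the Murnaghan--Nakayama rule \eqref{e:MN} to both sides and match the summands via the transposition bijection $T \mapsto T'$ on ribbon tableaux, where $T'$ is obtained by reflecting every ribbon (and hence all intermediate diagrams in the flag) across the main diagonal. Since transposition sends Young diagrams to Young diagrams and ribbons to ribbons (it preserves connectedness and the forbidden $[2,2]$-pattern), it yields a weight-preserving bijection
$$
\RT(\lambda, \alpha) \;\xrightarrow{\sim}\; \RT(\lambda', \alpha),
$$
where $\alpha$ is the cycle type of $\sigma$.

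The key step is to track how the height of a single ribbon transforms. A ribbon $r$ with $k$ boxes contains no $[2,2]$-subdiagram, hence its number of boxes equals (row span) $+$ (column span) $- 1 = (\height(r)+1) + \text{width}(r) - 1$, so $\text{width}(r) = k - \height(r)$. Transposition swaps rows and columns, hence
$$
\height(r') \;=\; \text{width}(r) - 1 \;=\; k - 1 - \height(r).
$$
Summing over the $N$ ribbons in a tableau $T \in \RT(\lambda, \alpha)$ with $\alpha = (k_1, \dots, k_N)$ gives
$$
\height(T') \;=\; \sum_{i=1}^{N} \bigl(k_i - 1 - \height(r^{(i)})\bigr) \;=\; (n - N) - \height(T).
$$
Since $\sgn(\sigma) = (-1)^{n - \cyc(\sigma)} = (-1)^{n-N}$, this yields $(-1)^{\height(T')} = \sgn(\sigma) \, (-1)^{\height(T)}$.

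The conclusion is immediate: by Theorem \ref{theorem:MN} and the bijection,
$$
\ch^{\lambda'}(\sigma) \;=\; \sum_{T' \in \RT(\lambda',\alpha)} (-1)^{\height(T')} \;=\; \sum_{T \in \RT(\lambda,\alpha)} \sgn(\sigma)\,(-1)^{\height(T)} \;=\; \sgn(\sigma)\,\ch^\lambda(\sigma).
$$
There is no real obstacle here; the only thing to be slightly careful about is the width/height accounting for a single ribbon, which is where the parity flip encoded in $\sgn(\sigma)$ originates (the parity of $\height(r')-\height(r)$ matches the parity of $k-1$, consistent with the informal comment preceding the lemma).
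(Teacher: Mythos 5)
Your proof is correct and is precisely the argument the paper sketches in the one-sentence remark preceding the lemma (transpose the ribbon tableau, note $\height(r') = k-1-\height(r)$ for a $k$-box ribbon, and sum to get the sign factor $(-1)^{n-\cyc(\sigma)} = \sgn(\sigma)$). You have simply written out the bookkeeping that the paper leaves implicit.
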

\begin{proof}
    This follows from the Murnaghan--Nakayama rule (\cite[Theorem 3.10]{LivreMéliot2017RepresentationTheoryofSymmetricGroups}), since transposing a ribbon changes the parity of its height if and only if the ribbon has an even number of boxes. 
\end{proof}
\subsection{Bounds on dimensions of representations}
The next two lemmas give useful bounds on $d_\lambda$.
\begin{lemma}
\label{lem:d lambda plus petit que 6 delta n}
    Let $\lambda$ be a Young diagram. Denote by $\delta = \delta(\lambda)$ the length of its diagonal. Then
    \begin{equation}
        d_\lambda \leq (2\delta)^n.
    \end{equation}
\end{lemma}
\begin{proof}
   The dimension $d_\lambda$ is the number of standard tableaux of $\lambda$, that is, the number of ways we can remove corners one by one from $\lambda$ until we get an empty diagram. At each step there are at most $2\delta$ corners, so overall we have $d_\lambda \leq (2\delta)^n$.
\end{proof}

\begin{lemma}\label{lem: bound dimension M N}
    Let $n,M \geq 1$ be integers, and let $\lambda \vdash n$.
    Denote $m = \bg \lambda^{>M}\bd$. Then
    \begin{equation}
        d_\lambda \leq (4M)^n m^{m/2}.
    \end{equation}
\end{lemma}
\begin{proof}
We have $\binom{n}{m} \leq 2^n$, $d_{\lambda^{\leq M}} \leq (2M)^{n-m}$ by Lemma \ref{lem:d lambda plus petit que 6 delta n}, and $d_{\lambda^{>M}} \leq \sqrt{m!} \leq {m}^{m/2}$, so
\begin{equation}
    d_\lambda \leq \binom{n}{m} d_{\lambda^{\leq M}} d_{\lambda^{>M}} \leq 2^n (2M)^{n-m} {m}^{m/2} = (4M)^n \pg \frac{m}{4M^2} \pd^{m/2} \leq (4M)^n m^{m/2}. \qedhere
\end{equation}
\end{proof}

\subsection{An iterated branching rule}\label{s:An iterated branching rule}

There are many identities for characters of symmetric groups. In the language of the Murnaghan--Nakayama rule (see \cite[Section 3.1]{LivreMéliot2017RepresentationTheoryofSymmetricGroups}), Larsen and Shalev \cite{LarsenShalev2008} peeled the ribbons in decreasing order of length, which corresponds to writing the cycle structure in decreasing order of the lengths of cycles. What we do is that we peel first the ribbons of size 1 and then the other ribbons. This peeling order is equivalent to the iterated branching rule \eqref{eq:branching rule intro non renormalisée}. 

Let us define the terms in this branching rule in more details.
If $\sigma$ is a permutation of $\kS_n$ with support $k$ (that is, $n-k$ fixed points), we define the permutation $\sigma^* \in \kS_k$ as follows: let $\ell_1 \geq \ell_2 \geq \ldots \geq \ell_N$ be the ordered lengths of the cycles of length $\geq 2$ of $\sigma$. Then we define 
\begin{equation}\label{eq:définition sigma étoile}
    \sigma^* = (1 \ldots \ell_1) (\ell_1+1 \ldots \ell_1+\ell_2) \cdots (k-\ell_N+1 \, \ldots \, k).
\end{equation}
In particular, $\sigma^*$ has the same number of $i$-cycles as $\sigma$ for each $i\geq 2$, and $\sigma^*$ has no fixed point. For example if $\sigma = ( 1\; 3 \; 5 \; 8) (7 \; 2) \in \kS_9$, then $\sigma^* = (1 \; 2 \; 3 \; 4)(5 \; 6) \in \kS_6$.

If $\lambda$ and $\mu$ are two Young diagrams such that $\mu \subset \lambda$, we recall that we denote by $d_{\lambda \backslash \mu}$ the number of standard tableaux of the skew diagram $\lambda \backslash \mu$. We finally recall that we write $\mu \subset_{\vdash k} \lambda$ to emphasize that $\mu$ is a sub Young diagram of $\lambda$ with $k$ boxes. We use the the following renormalized form of the iterated branching rule: for $\lambda \vdash n$ and $\sigma \in \kS_n$, we have
\begin{equation}\label{eq:branching rule}
        \chi^\lambda(\sigma) = \sum_{\mu \subset_{\vdash k} \lambda} \ch^\mu(\sigma^*) \frac{d_{\lambda\backslash \mu}}{d_\lambda}.
\end{equation}

\subsection{Combining branching rules with ideas of Larsen--Shalev and Naruse}\label{s: Combining branching rules with ideas of Larsen--Shalev and Naruse}
We start with the following classical counting lemma.
\begin{lemma}\label{lem: elementary bound d mu times d lambda backslash mu less than d lambda}
    Let $\lambda, \mu$ be Young diagrams and $n \geq 1$ such that $\lambda \vdash n$ and $\mu \subset \lambda$. Then
    \begin{equation}
        d_{\mu} d_{\lambda\backslash \mu} \leq d_{\lambda}.
    \end{equation}
In particular for any $\alpha \geq 0$ we have
\begin{equation}
 \pg d_\mu\pd ^{\alpha}\frac{d_{\lambda\backslash \mu}}{d_\lambda} \leq \pg \frac{d_{\lambda\backslash \mu}}{d_\lambda} \pd^{1-\alpha}.
\end{equation}
\end{lemma}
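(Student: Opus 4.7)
The plan is to prove the first inequality by an explicit combinatorial injection, and then deduce the second inequality by elementary algebra.

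For the first inequality, I would construct an injection $\Phi : \ST(\mu) \times \ST(\lambda \backslash \mu) \to \ST(\lambda)$. Given a standard tableau $T_1$ of $\mu$ and a standard tableau $T_2$ of $\lambda\backslash\mu$, define $\Phi(T_1,T_2)$ as the filling of $\lambda$ that places $T_1(u)$ in each box $u \in \mu$ and $|\mu| + T_2(v)$ in each box $v \in \lambda\backslash\mu$. The entries in the $\mu$-part use exactly $\{1,\dots,|\mu|\}$, those in the $\lambda\backslash\mu$-part use exactly $\{|\mu|+1,\dots,n\}$, so together they form a bijection with $\{1,\dots,n\}$. Along any row or column of $\lambda$, the portion inside $\mu$ is strictly increasing because $T_1$ is standard, the portion inside $\lambda\backslash\mu$ is strictly increasing because $T_2$ is standard, and across the interface the $\mu$-entries (all $\leq |\mu|$) are strictly less than the $\lambda\backslash\mu$-entries (all $>|\mu|$). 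Hence $\Phi(T_1,T_2) \in \ST(\lambda)$. Injectivity is automatic: restricting $\Phi(T_1,T_2)$ to $\mu$ recovers $T_1$, and restricting to $\lambda\backslash\mu$ and subtracting $|\mu|$ recovers $T_2$. Therefore
\begin{equation*}
    d_\mu \, d_{\lambda\backslash\mu} \;=\; |\ST(\mu) \times \ST(\lambda\backslash\mu)| \;\leq\; |\ST(\lambda)| \;=\; d_\lambda.
\end{equation*}

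For the second inequality, rearranging the first gives $d_\mu \leq d_\lambda / d_{\lambda\backslash\mu}$ (where we may assume $d_{\lambda\backslash\mu} > 0$, otherwise the claim is trivial). Raising to the power $\alpha \geq 0$ yields $d_\mu^{\alpha} \leq (d_\lambda/d_{\lambda\backslash\mu})^{\alpha}$, and multiplying both sides by $d_{\lambda\backslash\mu}/d_\lambda$ gives
\begin{equation*}
    d_\mu^{\alpha} \, \frac{d_{\lambda\backslash\mu}}{d_\lambda} \;\leq\; \left(\frac{d_\lambda}{d_{\lambda\backslash\mu}}\right)^{\alpha} \frac{d_{\lambda\backslash\mu}}{d_\lambda} \;=\; \left(\frac{d_{\lambda\backslash\mu}}{d_\lambda}\right)^{1-\alpha},
\end{equation*}
which is the desired bound.

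There is essentially no obstacle here: the only point requiring verification is that concatenating a standard tableau of $\mu$ with a standard tableau of $\lambda\backslash\mu$ produces a valid standard tableau of $\lambda$, which holds at the interface because all entries coming from $\mu$ are smaller than all entries coming from $\lambda\backslash\mu$. Note that one can alternatively derive the first inequality directly from the Naruse hook length formula (as is done in the proof of Lemma \ref{lem: binomial bound number standard tableaux fragmented} for the related binomial bound), using $S(\lambda,\mu) \geq H(\mu,\mu)$, but the combinatorial proof above is arguably more transparent.
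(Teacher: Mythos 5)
Your proof is correct and takes essentially the same approach as the paper: the first inequality is the observation that standard tableaux of $\mu$ paired with standard tableaux of $\lambda\backslash\mu$ concatenate (after shifting the latter by $|\mu|$) into standard tableaux of $\lambda$, and the second inequality is the same elementary algebraic rearrangement (the paper writes it as the factorization $d_\mu^\alpha \frac{d_{\lambda\backslash\mu}}{d_\lambda} = \big(d_\mu\frac{d_{\lambda\backslash\mu}}{d_\lambda}\big)^\alpha\big(\frac{d_{\lambda\backslash\mu}}{d_\lambda}\big)^{1-\alpha}$, which is just your manipulation regrouped). Your write-up of the injection and its inverse is simply more explicit; the substance is identical.
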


\begin{proof}
    By definition, $d_{\mu} d_{\lambda\backslash \mu}$ counts the number of standard tableaux of $\lambda$ with the numbers from 1 to $k$ in $\mu$ and the numbers from $k+1$ to $n$ in $\lambda\backslash \mu$, and is therefore at most the total number $d_\lambda$ of standard tableaux of $\lambda$. Thus, $d_\mu d_{\lambda \backslash \mu} \leq d_\lambda$, or equivalently $\frac{d_\mu d_{\lambda \backslash \mu}}{d_\lambda} \leq 1$. For $\alpha \geq 0$, we therefore have
    \begin{equation}
        \pg d_\mu\pd ^{\alpha}\frac{d_{\lambda\backslash \mu}}{d_\lambda} = \pg d_\mu\frac{d_{\lambda\backslash \mu}}{d_\lambda} \pd ^{\alpha} \pg\frac{d_{\lambda\backslash \mu}}{d_\lambda} \pd^{1-\alpha} \leq \pg\frac{d_{\lambda\backslash \mu}}{d_\lambda} \pd^{1-\alpha}. \qedhere
    \end{equation}
\end{proof}

Recall from Lemma \ref{lem:ALS improved character bound} that $\beta(\sigma) = \pg 1+ \frac{\Cvirt}{\ln n} \pd E(\sigma)$ for $\sigma \in \kS_n$, and set $\theta_k = \frac{1}{2} \pg 1+ \frac{\Cvirt}{\ln k} \pd$ for $k\geq 2$.
\begin{lemma}\label{lem: improved LS inside MN rewritten}
    Let $n\geq 2$, $2\leq k \leq n$, $\sigma \in \kS_n$ with support size $k$, and $\lambda \vdash n$. Then
\begin{equation}\label{eq: MN rewritten + ALS}
    \bg\chi^\lambda(\sigma)\bd\leq \sum_{\mu \subset_{\vdash k} \lambda} d_\mu^{\beta(\sigma^*)}\frac{d_{\lambda\backslash \mu}}{d_\lambda} \leq \sum_{\mu \subset_{\vdash k} \lambda} d_\mu^{\theta_k}\frac{d_{\lambda\backslash \mu}}{d_\lambda}.
\end{equation}
\end{lemma}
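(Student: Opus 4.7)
The proof is essentially an assembly of three ingredients already in place: the rewritten Murnaghan--Nakayama rule (Lemma \ref{lem: MN rewritten with sigma star}), the triangle inequality, and the sharpened Larsen--Shalev character bound (Lemma \ref{lem:ALS improved character bound}). The plan is to apply them in that order and then check that the exponent $\beta(\sigma^*)$ can be replaced by the cleaner $\theta_k$.

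First I would invoke Lemma \ref{lem: MN rewritten with sigma star} to write
\begin{equation*}
    \chi^\lambda(\sigma) = \sum_{\mu \subset_{\vdash k} \lambda} \ch^\mu(\sigma^*) \, \frac{d_{\lambda\backslash \mu}}{d_\lambda},
\end{equation*}
and then apply the triangle inequality. Since $\sigma^* \in \kS_k$ is, by construction in \eqref{eq:définition sigma étoile}, a fixed-point-free permutation, Lemma \ref{lem:ALS improved character bound} (applied in $\kS_k$ rather than in $\kS_n$) yields
$|\ch^\mu(\sigma^*)| \leq d_\mu^{\beta(\sigma^*)}$
for every $\mu \vdash k$, where $\beta(\sigma^*) = (1+\Cvirt/\ln k) \, E(\sigma^*)$. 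Substituting this bound term by term produces the first inequality in the claimed chain.

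For the second inequality, one needs $d_\mu^{\beta(\sigma^*)} \leq d_\mu^{\theta_k}$ for every $\mu \subset_{\vdash k} \lambda$. Since $d_\mu \geq 1$, it suffices to check $\beta(\sigma^*) \leq \theta_k$ whenever $d_\mu > 1$ (the cases $d_\mu = 1$ giving $1 \leq 1$ trivially). But the second half of Lemma \ref{lem:ALS improved character bound} states precisely that fixed-point-freeness forces $E(\sigma^*) \leq 1/2$, so
\begin{equation*}
    \beta(\sigma^*) = \left(1+\tfrac{\Cvirt}{\ln k}\right) E(\sigma^*) \leq \tfrac{1}{2}\left(1+\tfrac{\Cvirt}{\ln k}\right) = \theta_k.
\end{equation*}

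There is no genuine obstacle here; the only subtlety is notational, namely that $\beta(\cdot)$ in Lemma \ref{lem:ALS improved character bound} is implicitly normalized by the size of the ambient symmetric group, so when applied to $\sigma^* \in \kS_k$ the denominator is $\ln k$ and not $\ln n$. This matches the definition of $\theta_k$ exactly, which is why the bookkeeping closes cleanly.
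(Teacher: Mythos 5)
Your proof is correct and follows exactly the same route as the paper: apply Lemma \ref{lem: MN rewritten with sigma star}, take the triangle inequality, bound each $|\ch^\mu(\sigma^*)|$ by $d_\mu^{\beta(\sigma^*)}$ via Lemma \ref{lem:ALS improved character bound} applied in $\kS_k$, and finally use $E(\sigma^*)\leq 1/2$ (fixed-point-freeness) to pass from $\beta(\sigma^*)$ to $\theta_k$. Your remark that the $\ln k$ (rather than $\ln n$) normalization in $\beta(\sigma^*)$ is what makes the exponent match $\theta_k$ is precisely the point the paper flags as well.
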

\begin{proof}
    Applying the triangle inequality in \eqref{eq:branching rule}, we get
    \begin{equation}
\bg\chi^\lambda(\sigma)\bd = \bg\sum_{\mu \subset_{\vdash k} \lambda} \ch^\mu(\sigma^*) \frac{d_{\lambda\backslash \mu}}{d_\lambda}\bd \leq \sum_{\mu \subset_{\vdash k} \lambda} \bg  \ch^\mu(\sigma^*) \bd \frac{d_{\lambda\backslash \mu}}{d_\lambda}.
    \end{equation}
    The first inequality follows from Lemma \ref{lem:ALS improved character bound}, applied to each $\mu \subset_{\vdash k} \lambda$. 
    Note that $\sigma^* \in \kS_k$, so $\beta(\sigma^*) = \pg 1+ \frac{\Cvirt}{\ln k} \pd E(\sigma^*)$. Moreover $\sigma^*$ is a fixed-point-free permutation, so we have $E(\sigma^*) \leq 1/2$, which proves the second inequality since $\theta_k = \frac{1}{2} \pg 1+ \frac{\Cvirt}{\ln k} \pd$.
\end{proof}

\begin{lemma}\label{lem: bound on MN rewritten with ALS renomalized and simplified lemme fusionné}
    Let $1/2< \theta \leq 1$. Set $k_0(\theta) = \min \ag k'\geq 2 \du \frac{1}{2} \pg 1+ \frac{\Cvirt}{\ln k'} \pd \leq \theta\ad$. Let $n$ and $k$ be integers such that $k_0(\theta)\leq k \leq n$. Let $\sigma \in \kS_n$ with support size $k$, and let $\lambda \vdash n$. Then
\begin{equation}
    \bg \chi^\lambda(\sigma) \bd \leq e^{\Cpart \sqrt{k}} \max_{\mu \subset_{\vdash k} \lambda} d_\mu^{\theta} \frac{d_{\lambda\backslash \mu}}{d_\lambda}.
\end{equation}
\end{lemma}
\begin{proof} 
Recall from Lemma \ref{lem:def Cpart} that $p(k)\leq e^{\Cpart\sqrt{k}}$. Moreover, since $k\geq k_0(\theta)$ we have $\theta_k \leq \theta$. Plugging this into Lemma \ref{lem: improved LS inside MN rewritten} therefore gives
\begin{equation}
    \bg\chi^\lambda(\sigma)\bd \leq \sum_{\mu \subset_{\vdash k} \lambda} d_\mu^{\theta_k}\frac{d_{\lambda\backslash \mu}}{d_\lambda} \leq p(k) \max_{\mu \subset_{\vdash k} \lambda} d_\mu^{\theta_k}\frac{d_{\lambda\backslash \mu}}{d_\lambda} \leq e^{\Cpart\sqrt{k}} \max_{\mu \subset_{\vdash k} \lambda} d_\mu^{\theta}\frac{d_{\lambda\backslash \mu}}{d_\lambda}. \qedhere
\end{equation}
\end{proof}

\section{Bounds for high-level representations}\label{s: bounds for high-level representations}

\subsection{Bounds on the number of excited diagrams and excited sums}
To bound characters, we need to understand better the ratios $\frac{d_{\lambda\backslash \mu}}{d_\lambda} = \frac{S(\lambda,\mu)}{|\lambda|^{\downarrow |\mu|}}$. Asymptotics for $d_{\lambda\backslash \mu}$ were proved in different regimes for diagrams $\lambda$ and $\mu$ with short hook lengths; see \cite{DousseFéray2019skewdiagramscharacters, MoralesPakTassy2022skewshapelozenge}.
Here we need bounds that combine well with deep triple slicings (see Figure \ref{fig:triple slicing} below). In order to do so we have to prove bounds on the excited sums 
\begin{equation}
    S(\lambda, \mu) = \sum_{E \in \dgex(\lambda, \mu)} H(\lambda, E).
\end{equation}
We start with the following standard bound.

\begin{lemma}\label{lem:bound number superexcited inner diagrams brutal product}
    Let $\lambda, \mu $ be Young diagrams such that $\mu \subset \lambda$. Denote $n = |\lambda|$. Then
\begin{equation}\label{eq:bound S lambda mu via number superexcited diagrams}
  S(\lambda, \mu) \leq  2^n H(\lambda, \mu).
\end{equation}
\end{lemma}
\begin{proof}
Since each excited diagram in $\cE(\lambda, \mu)$ is a subset of the $n$ boxes of $\lambda$, we have $|\cE(\lambda, \mu)| \leq 2^n$. Moreover, $\max_{E \in \cE(\lambda, \mu)} H(\lambda, E) = H(\lambda, \mu)$. Therefore $S(\lambda, \mu) \leq |\cE(\lambda, \mu)| \max_{E \in \cE(\lambda, \mu)} H(\lambda, E) \leq 2^n H(\lambda, \mu)$.
\end{proof}

However this bound is too weak in general, and we need more precise bound on $|\cE(\lambda, \mu)|$ and $S(\lambda, \mu)$ which depend on the shapes of $\lambda$ and $\mu$. Let us first bound $|\cE(\lambda, \mu)|$ when $\mu$ consists of a single row.

\begin{lemma}\label{lem:bound number superexcited diagrams inner flat coeff binom}
    Let $\lambda$ be a Young diagram, $\delta$ be the length of the diagonal of $\lambda$, and let $\ell \leq \lambda_1$. Then
        \begin{equation}
    |\cE(\lambda,[\ell])| \leq \min \pg \binom{\ell + \lf n/\ell \rf}{\ell}, \binom{\ell + \delta}{\ell} \pd .
\end{equation}
\end{lemma}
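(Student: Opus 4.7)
The plan is to parametrize every excited diagram $E \in \cE(\lambda, [\ell])$ by an $\ell$-tuple of non-negative integers encoding how far each box has moved, then to show these tuples form a weakly increasing sequence with a controlled maximum, and finally to conclude via a stars-and-bars count.

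I first observe that every elementary excitation sends a box $(a, b)$ to $(a+1, b+1)$, so the box of $[\ell]$ originally at $(i, 1)$ must end at $(i + t_i, 1 + t_i) \in \lambda$ for some integer $t_i \geq 0$. Thus $E$ is encoded by the vector $(t_1, \ldots, t_\ell) \in \bbN^\ell$, and it suffices to bound the number of admissible vectors.

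The crux is to establish $0 \leq t_1 \leq t_2 \leq \ldots \leq t_\ell$ for every $E \in \cE(\lambda, [\ell])$, which I plan to prove by induction on the length of the excitation sequence producing $E$. The invariant holds trivially when all $t_i = 0$. To move box $i$ from $(i + t_i, 1 + t_i)$ to $(i + t_i + 1, 1 + t_i + 1)$, the excitation rule requires that none of $(i + t_i + 1, 1 + t_i)$, $(i + t_i, 1 + t_i + 1)$, $(i + t_i + 1, 1 + t_i + 1)$ be in the current diagram. A short case check shows that among the boxes of the current excited diagram the only candidates to occupy these positions are box $i+1$ (in case $t_{i+1} = t_i$) and box $i-1$ (in case $t_{i-1} = t_i + 1$); the inductive hypothesis $t_{i-1} \leq t_i$ rules out the second case, while the first forces $t_{i+1} > t_i$. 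After the move $t_i$ becomes $t_i + 1$, and one directly verifies that both $t_{i-1} \leq t_i + 1$ and $t_i + 1 \leq t_{i+1}$ still hold, so the invariant is preserved.

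It then remains to bound the $t_i$. Box $i$ lies on the diagonal $\{(k + (i-1), k) : k \geq 1\} \cap \lambda$, whose length is weakly decreasing in $i$ for $i \geq 1$ and equals $\delta$ when $i = 1$; this gives $t_i \leq \delta - 1$ for all $i$, and the number of weakly increasing sequences $0 \leq t_1 \leq \ldots \leq t_\ell \leq \delta - 1$ is $\binom{\ell + \delta - 1}{\ell} \leq \binom{\ell + \delta}{\ell}$. For the second bound, the diagonals indexed by $i = 1, \ldots, \ell$ are disjoint subsets of $\lambda$, so the sum of their lengths is at most $n$; by monotonicity the $\ell$-th such diagonal has length at most $n/\ell$, hence $t_\ell \leq \lfloor n / \ell \rfloor - 1$, and the same stars-and-bars count gives $\binom{\ell + \lfloor n/\ell \rfloor - 1}{\ell} \leq \binom{\ell + \lfloor n/\ell \rfloor}{\ell}$. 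The main obstacle is the monotonicity claim, which requires carefully matching the excitation rule with the positions of neighbouring boxes; the rest is elementary diagonal arithmetic.
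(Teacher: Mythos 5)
Your proof is correct and follows essentially the same approach as the paper's: encode each $E\in\cE(\lambda,[\ell])$ by the excitation counts $(t_1,\ldots,t_\ell)$, bound these, and count via stars and bars. You go further than the paper in one useful respect, namely the inductive verification that $t_1\leq t_2\leq\ldots\leq t_\ell$: the paper uses the equivalent fact that the column coordinates of the boxes of $E$ are distinct, but states it without justification, and your case analysis of which neighbouring boxes can block an excitation is exactly the argument needed to back that claim up. Your derivation of the $\lfloor n/\ell\rfloor$ bound is via an averaging argument on disjoint diagonals rather than the paper's rectangle-containment observation on the rightmost box, but both are elementary and give the same conclusion; in fact your resulting counts $\binom{\ell+\delta-1}{\ell}$ and $\binom{\ell+\lfloor n/\ell\rfloor-1}{\ell}$ are slightly sharper than what the lemma asserts.
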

\begin{proof}
Set $n = |\lambda|$. A diagram $E \in \cE(\lambda,[\ell])$ is totally determined by the horizontal coordinates of its boxes, which are distinct and are between $1$ and $\ell + \lf n/\ell \rf$. 
To see this, observe first that the maximum horizontal coordinate of a box in an excited diagram $E \in \cE(\lambda,[\ell])$ is reached by exciting the box $(1,\ell)$. Furthermore, if the box $(1,\ell)$ can be excited $j$ times, then necessarily the box $(1+j,\ell+j)$ is in $\lambda$, and in particular $j\ell \leq n$. Hence, the maximum horizontal coordinate of a box in $E$ is at most $\ell + \lf n/\ell \rf$.
Therefore, the $\ell$ boxes must all have distinct horizontal coordinates, each in $\{1, \ldots, \floor{n/\ell}\}$, and hence
\begin{equation}
    |\cE(\lambda,[\ell])|
\leq
    \binom{\ell + \lf n/\ell \rf}{\ell}.
\end{equation}
Similarly, each box cannot be moved more than $\delta-1 \le \delta$ times.
So, $|\cE(\lambda, [\ell])|$ is at most the number of weakly increasing integer-valued sequences with values in $[0, \delta]$:
\begin{equation}
        |\cE(\lambda,[\ell])| \leq \binom{\ell + \delta}{\ell}
    \end{equation}
by the usual stars and bars argument, which concludes the proof.
\end{proof}

\begin{lemma}\label{lem:bound number superexcited diagrams inner flat square root n}
    Let $\lambda$ be a Young diagram, and let $1\leq \ell \leq \lambda_1$. Then
        \begin{equation}
    |\cE(\lambda,[\ell])| \leq e^{5\sqrt{|\lambda|}}.
\end{equation}
\end{lemma}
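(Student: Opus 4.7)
The plan is to apply the first upper bound from Lemma~\ref{lem:bound number superexcited diagrams inner flat coeff binom}, namely $|\cE(\lambda,[\ell])| \leq \binom{\ell+m}{\ell}$ with $m := \lfloor n/\ell\rfloor$ and $n := |\lambda|$, and combine it with the elementary estimate $\binom{a}{b} \leq (ea/b)^b$. I would then optimize over $\ell$ by splitting into three regimes.

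In the regime $\ell \leq \sqrt{n}$ one has $\ell \leq m$, so $\binom{\ell+m}{\ell} \leq (2em/\ell)^\ell \leq (2en/\ell^2)^\ell$. Writing $\ell = s\sqrt{n}$ with $s \in (0,1]$, the logarithm becomes $s\sqrt{n}\,(\ln(2e) - 2\ln s)$, and a one-variable calculus argument shows this is maximized at $s = \sqrt{2/e}$ with value $2\sqrt{2/e}\,\sqrt{n} < 2\sqrt{n}$.

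In the intermediate regime $\sqrt{n} < \ell \leq n/2$ one has $m \geq n/\ell - 1 \geq n/(2\ell)$, hence $\binom{\ell+m}{m} \leq (2e\ell/m)^m \leq (4e\ell^2/n)^m$. Parametrizing $\ell = s\sqrt{n}$ with $s \geq 1$ and using $m \leq \sqrt{n}/s$, the logarithm is at most $(\sqrt{n}/s)\,(\ln(4e) + 2\ln s)$; a derivative check shows this is decreasing in $s$ on $[1,\infty)$, so it is bounded by $\ln(4e)\sqrt{n} < 3\sqrt{n}$ at $s=1$. In the remaining regime $\ell > n/2$ one has $m = 1$, hence $\binom{\ell+1}{1} \leq n+1$, whose logarithm is easily seen to be below $5\sqrt{n}$ for all $n \geq 1$.

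Putting the three cases together yields $\ln|\cE(\lambda,[\ell])| \leq 5\sqrt{n}$ uniformly in $\ell$. The only real subtlety lies in the regime $\ell \asymp \sqrt{n}$, where the central binomial coefficient $\binom{2\sqrt{n}}{\sqrt{n}}$ is of order $4^{\sqrt{n}}$ and essentially saturates the bound, so no constant below $2\ln 2$ can be obtained from this approach; the generous choice of the constant $5$ has the virtue of absorbing both the analytic optimum and small-$n$ boundary effects without any further fuss.
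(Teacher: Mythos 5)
Your proof is correct. Both you and the paper start from Lemma~\ref{lem:bound number superexcited diagrams inner flat coeff binom} and split on the size of $\ell$ relative to $\sqrt{n}$, but the details diverge. The paper uses \emph{both} alternatives in that lemma: for $\ell \leq \sqrt{n}$ it invokes the $\delta$-bound and notes simply that $\binom{\ell+\delta}{\ell}\leq 2^{\ell+\delta}\leq 4^{\sqrt{n}}$; for $\ell>\sqrt{n}$ it uses the $\lfloor n/\ell\rfloor$-bound and reduces to a single variable $y=\sqrt{n}/\ell\in(0,1]$, closing with the inequality $y^{-4y}\leq e^{4/e}$. You instead use only the $\lfloor n/\ell\rfloor$-bound throughout, split into three regimes, and optimize each by one-variable calculus; the extra third regime $\ell>n/2$ is needed precisely because you forgo the $\delta$-bound and so need $m=\lfloor n/\ell\rfloor\geq n/(2\ell)$, which fails once $\ell>n/2$. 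Your route is slightly more computational but entirely elementary and self-contained, and it actually delivers the sharper bound $e^{3\sqrt{n}}$; the paper's route is shorter because the $\delta$-bound makes the small-$\ell$ case trivial. Your closing remark that the constant cannot be pushed below $2\ln 2$ by this method is accurate and matches what the paper's $4^{\sqrt{n}}$ step implicitly concedes.
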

\begin{proof}
    Denote $n=|\lambda|$, and by $\delta = \delta(\lambda)$ the diagonal length of $\lambda$. Since the square $[\delta^\delta]$ is a sub-diagram of $\lambda$, we have $\delta \le \sqrt n$.
    By Lemma \ref{lem:bound number superexcited diagrams inner flat coeff binom}, if $\ell \leq \sqrt{n}$ we have 
\begin{equation}
    |\cE(\lambda,[\ell])| \leq \binom{\ell + \delta}{\ell} \leq 2^{\ell + \delta} \leq 2^{\sqrt{n} + \sqrt{n}} = 2^{2\sqrt{n}} \leq e^{5\sqrt{|\lambda|}}.
\end{equation}
On the other hand, again by Lemma \ref{lem:bound number superexcited diagrams inner flat coeff binom}, if $\ell > \sqrt{n}$, using $\binom{b}{a} \leq (eb/a)^a$ for all $a,b > 0$, we get
\begin{equation}
    |\cE(\lambda,[\ell])| \leq \binom{\ell + \lf n/\ell \rf}{\ell} =  \binom{\ell + \lf n/\ell \rf}{\lf n/\ell \rf} \leq \binom{2\ell}{\lf n/\ell \rf} \leq \binom{2\ell}{\lc n/\ell \rc} \leq \pg \frac{2e\ell}{\lc n/\ell \rc} \pd^{\lc n/\ell \rc},
\end{equation}
so 
\begin{equation}
    |\cE(\lambda,[\ell])| \leq \pg \frac{2e\ell}{n/\ell} \pd^{\lc n/\ell \rc} \leq \pg \frac{2e\ell^2}{n} \pd^{2 n/\ell}.
\end{equation}
Set $y = \sqrt{n}/\ell$. By assumption we have $0\leq y\leq 1$ so $y^y \geq e^{-1/e}$. We conclude that
\begin{equation}
    |\cE(\lambda,[\ell])|^{1/\sqrt{n}} \leq \pg \frac{2e}{y^2} \pd^{2 y} \leq (2e)^2 y^{-4y} \leq (2e)^2 e^{4/e} \leq e^5. \qedhere
\end{equation}
\end{proof}

\begin{lemma}\label{lem: bound on the number of excited diagrams and excited sums when shifting lines}
    Let $\lambda$ be a Young diagram. Let $\ell\geq 1$ and $i,j\geq 0$ be integers. Let $L$ be either $[\ell]$ or $[1^\ell]$ and $\Tilde{L} = \Shift_{i,j}(L)$. Assume that $\Tilde{L} \subset \lambda$. Then 
   \begin{equation}
    |\cE(\lambda,\Tilde{L})| \leq |\cE(\lambda,L)| \quad  \text{ and } \quad S(\lambda, \Tilde{L}) \leq S(\lambda, L).
\end{equation}
\end{lemma}
\begin{proof}
    Since we can only excite boxes towards north east, the set $\cE(\lambda,\Tilde{L})$ is in bijection with $\cE(\nu ,L)$, where $\nu = \Shift_{-i, -j}(\lambda \cap [i, \infty) \times [j, \infty))$, and the corresponding hook lengths are the same. The two inequalities then follow, since $\nu \subset \lambda$ by definition. 
\end{proof}

\begin{lemma}\label{lem: bound on the number of excited diagrams and excited sums when fragmenting into row and column diagrams}
    Let $\lambda$ be a Young diagram and $E_0$ be a subset of the boxes of $\lambda$. Write $E_0$ as a disjoint union of $p_R$ row diagrams $R_1,\ldots, R_{p_R}$ and $p_C$ column diagrams $C_1,\ldots,C_{p_C}$, whose lengths we denote by $r_1,\ldots, r_{p_R}, c_1,\ldots,c_{p_C}$. Then
    \begin{equation}
    |\cE(\lambda, E_0)| \leq \pg \prod_{i=1}^{p_R} \bg \cE(\lambda, [r_i])  \bd \pd\pg \prod_{i=1}^{p_C} \bg \cE(\lambda, [1^{c_i}])  \bd \pd,
\end{equation}
and
 \begin{equation}
    S(\lambda, E_0) \leq \pg \prod_{i=1}^{p_R} S(\lambda, [r_i]) \pd\pg \prod_{i=1}^{p_C} S(\lambda, [1^{c_i}]) \pd.
\end{equation}
\end{lemma}
\begin{proof}
    This follows immediately from Lemma \ref{lem: fragmentation bound for excited sums} and Lemma \ref{lem: bound on the number of excited diagrams and excited sums when shifting lines}. 
\end{proof}

\begin{lemma}
    \label{lem:bound number superexcited inner diagrams multiline product}
    Let $\lambda$ be a Young diagram and $p\geq 1$ be an integer. Let $
    E_0 \subset \lambda$ be a subset of the boxes of $\lambda$ that can be written as a disjoint union of $p$ row and column diagrams. Then
\begin{equation}
    |\cE(\lambda,E_0)| \leq e^{5 p\sqrt{|\lambda|}},
\end{equation}
and in particular
\begin{equation}\label{eq:bound S lambda mu via number superexcited diagrams union of rows}
    S(\lambda, E_0) \leq e^{5 p\sqrt{|\lambda}|} H(\lambda, E_0).
\end{equation}
\end{lemma}
\begin{proof}

Write $E_0$ as a disjoint union of row and column diagrams. For each of these row or column diagrams $L$, by Lemmas \ref{lem:bound number superexcited diagrams inner flat square root n} and \ref{lem: bound on the number of excited diagrams and excited sums when shifting lines} we have $|\cE(\lambda,L)| \leq e^{5\sqrt{|\lambda|}}$. We deduce from Lemma \ref{lem: bound on the number of excited diagrams and excited sums when fragmenting into row and column diagrams} that $|\cE(\lambda,E_0)| \leq e^{5 p\sqrt{|\lambda|}}$.
The second point follows from bounding the sum $S(\lambda, E_0)~=~\sum_{E\in \cE(\lambda, E_0)} H(\lambda, E)$ by its maximal term $H(\lambda, E_0)$, times its number of terms $|\cE(\lambda, E_0)|$.
\end{proof}

\subsection{Deep triple slicings of diagrams}

Diaconis and Shahshahani \cite{DiaconisShahshahani1981} decomposed diagrams according to how many boxes they have on the first row. While this works for transpositions, for other conjugacy classes it works in general only for low-level representations, that is, when the first row is very long. For exponential character bounds, or equivalently to bound mixing times in $L^2$ distance for the simple random walk on $\Cay(\kS_n, \cC)$, the goal is to bound for each $\lambda$ the quantity 
\begin{equation}
    d_\lambda \bg \chi^\lambda(\cC)\bd^t
\end{equation}
for $t$ close to the mixing time. The key observation is that dimensions of diagrams depend almost only on the number of boxes deep inside the diagram, while characters depend almost only on the geometry of the first rows and columns. This motivates the following decomposition.

\begin{definition}\label{def: deep triple slicing}
    Let $\lambda$ be a Young diagram, and let $M\geq 1$ be an integer. The \textit{depth-$M$ triple slicing (or decomposition)} of $\lambda$ is the triplet of diagrams $(\lambda(1), \lambda(2), \lambda(3)) = (\lambda_1, \lambda^{\leq M} \backslash \lambda_1, \lambda^{>M})$. We illustrate this in Figure \ref{fig:triple slicing}.
\end{definition}

\begin{figure}[!ht]
    \begin{center}
    \begin{ytableau}
        \none &*(cyan!60)  \\
        \none&*(cyan!60)&*(cyan!60) \\
        \none&*(cyan!60)&*(cyan!60) \\
        \none&*(cyan!60)&*(cyan!60)&*(cyan!60) \\
        \none&*(cyan!60)&*(cyan!60)&*(cyan!60)&*(cyan!60) \\
        \none&*(cyan!60)&*(cyan!60)&*(cyan!60)&*(cyan!60)&*(orange!70) \\
        \none &*(cyan!60)&*(cyan!60)&*(cyan!60)&*(cyan!60)&*(orange!70)&*(orange!70)&*(orange!70)&*(orange!70)\\
        \none &*(cyan!60)&*(cyan!60)&*(cyan!60)&*(cyan!60)&*(orange!70)&*(orange!70)&*(orange!70)&*(orange!70)&*(orange!70)&\none &\none[\textcolor{orange!70}{\lambda(3)}]\\
        \none &*(cyan!60)&*(cyan!60)&*(cyan!60)&*(cyan!60) &*(orange!70)&*(orange!70)&*(orange!70)&*(orange!70)&*(orange!70)&*(orange!70)\\
        \none &*(cyan!60)&*(cyan!60)&*(cyan!60)&*(cyan!60)&*(orange!70)&*(orange!70)&*(orange!70)&*(orange!70)&*(orange!70)&*(orange!70)&*(orange!70)&\none \\
        \none &*(cyan!60)&*(cyan!60)&*(cyan!60)&*(cyan!60)&*(cyan!60)&*(cyan!60)&*(cyan!60)&*(cyan!60)&*(cyan!60)&*(cyan!60)&*(cyan!60)&*(cyan!60) &*(cyan!60)&*(cyan!60)&\none &\none[\textcolor{cyan!60}{\lambda(2)}]\\
        \none &*(cyan!60)&*(cyan!60)&*(cyan!60)&*(cyan!60)&*(cyan!60)&*(cyan!60)&*(cyan!60)&*(cyan!60)&*(cyan!60)&*(cyan!60)&*(cyan!60)&*(cyan!60)&*(cyan!60)&*(cyan!60)&*(cyan!60)& \none \\
        \none &*(cyan!60)&*(cyan!60)&*(cyan!60)&*(cyan!60)&*(cyan!60)&*(cyan!60)&*(cyan!60)&*(cyan!60)&*(cyan!60)&*(cyan!60)&*(cyan!60)&*(cyan!60)&*(cyan!60)&*(cyan!60)&*(cyan!60) &*(cyan!60)&*(cyan!60)&*(cyan!60)& \none & \none& \none& \none& \none &\none[\textcolor{red!90}{\lambda(1)}]\\
        \none &*(red!90)&*(red!90)&*(red!90)&*(red!90)&*(red!90)&*(red!90)&*(red!90)&*(red!90)&*(red!90)&*(red!90)&*(red!90)&*(red!90)&*(red!90)&*(red!90)&*(red!90)&*(red!90)&*(red!90)&*(red!90)&*(red!90)&*(red!90)&*(red!90)&*(red!90)&*(red!90)&*(red!90)&*(red!90) & \none  
    \end{ytableau}
\end{center}

\caption{The depth-4 triple slicing for $\lambda = [25,18,15,14,11,10,9,8,,5,4,3,2,2,1]$.}
\label{fig:triple slicing}
\end{figure}

In what follows, given an integer $M\geq 1$, two integers $0\leq k\leq n$, and two Young diagrams $\mu \subset \lambda$, we always write $n_i = |\lambda(i)|$ and $k_i = |\mu(i)|$ for $1\leq i\leq 3$, without specifying the dependence on $M$, and for compactness we may call $(\lambda(i), \mu(i), n_i, k_i)_{1\leq i\leq 3}$ the parameters of the depth-$M$ triple slicing.
Note that by definition we have $n_1 + n_2 + n_3 = n$ and $k_1 + k_2+k_3 = k$.

\subsection{Bounds on biased skew dimensions via deep triple slicings}

Let $\mu \subset \lambda$ be two Young diagrams and let $\theta \in \bbR$. We define the $\theta$-biased skew dimension to be the quantity
    \begin{equation}
     B_\theta(\lambda, \mu) := d_\mu^\theta \frac{d_{\lambda\backslash \mu}}{d_\lambda}  = d_\mu^\theta\frac{S(\lambda, \mu)}{n^{\downarrow k}},
    \end{equation}
where the last equality is due to the Naruse hook length formula (written as \eqref{e:NHLF rewritten}).

In particular, if $0\leq \theta\leq 1$, by Lemma \ref{lem: elementary bound d mu times d lambda backslash mu less than d lambda} we have
\begin{equation}\label{eq: bound on biased skew dimensions by 1}
    B_\theta(\lambda, \mu) \leq \pg \frac{d_{\lambda\backslash \mu}}{d_\lambda} \pd^{1-\theta} \leq 1.
\end{equation}

\begin{proposition}\label{prop: borne dimensions gauches biaisées par découpe profonde triple}
    Let $\theta>0$. Let $n$ and $k$ be integers such that $0\leq k\leq n$. Let $\lambda \vdash n$ and $\mu \vdash k$ such that $\mu \subset \lambda$. Let $M\geq 1$ be an integer, and denote the parameters of the depth-$M$ triple slicings of $\lambda$ and $\mu$ by $(\lambda(i), \mu(i), n_i, k_i)_{1\leq i\leq 3}$. Then
 \begin{equation}
     B_\theta(\lambda, \mu) \leq e^{24M\sqrt{n}} \pg \frac{k!}{\prod_{i=1}^3 k_i!} \pd^\theta \frac{\prod_{i=1}^3 n_i^{\downarrow k_i} }{n^{\downarrow k}} \prod_{i=1}^3 B_\theta(\lambda(i), \mu(i)).
 \end{equation}
\end{proposition}
\begin{proof}
    First, by Lemma \ref{lem: binomial bound number standard tableaux fragmented} we have
\begin{equation}\label{eq: bound triple slicing on the dimension of mu}
    d_\mu \leq \frac{k!}{k_1!k_2!k_3!}d_{\mu(1)}d_{\mu(2)}d_{\mu(3)}.
\end{equation}
Also, by Lemma \ref{lem: fragmentation bound for excited sums} we have
\begin{equation}
    S(\lambda, \mu) \leq S(\lambda, \mu(1))S(\lambda, \mu(2)) S(\lambda, \mu(3)) = S(\lambda, \mu(1))S(\lambda, \mu(2)) S(\lambda(3), \mu(3))
\end{equation}
Let us bound the first two terms above separately. We have
\begin{equation}
    S(\lambda, \mu(1)) = \sum_{E\in \cE(\lambda, \mu(1))}H(\lambda, E) \leq \sum_{E\in \cE(\lambda, \mu(1))}H(\lambda, \mu(1))  = |\cE(\lambda, \mu(1))| H(\lambda, \mu(1)),
\end{equation}
and therefore by Lemmas \ref{lem:bound number superexcited diagrams inner flat square root n} and \ref{lem: slicing from ALS} (a) we have
\begin{equation}
   S(\lambda, \mu(1)) \leq e^{5\sqrt{n}} \pg e^{3\sqrt{n}}  H(\lambda(1), \mu(1))\pd = e^{8\sqrt{n}} H(\lambda(1), \mu(1))\leq e^{8M\sqrt{n}} S(\lambda(1), \mu(1)).
\end{equation}
By Lemma \ref{lem:bound number superexcited inner diagrams multiline product}, that is, applying the same argument to each (of the at most $2M$ by definition) row and column diagram in the stairs decompositions (see Figure \ref{fig:stairs slicing}) of $\lambda(2)$ and $\mu(2)$,  we obtain
\begin{equation}
\begin{split}
     S(\lambda, \mu(2)) \leq |\cE(\lambda, \mu(2))| H(\lambda, \mu(2)) \leq & \pg e^{3\sqrt{n}} e^{5\sqrt{n}}\pd^{2M} H(\lambda(2), \mu(2)) \\
     & \leq e^{16M\sqrt{n}} S(\lambda(2), \mu(2)).
\end{split}
\end{equation}
Combining these bounds, we obtain
\begin{equation}
    S(\lambda, \mu)  \leq e^{24M\sqrt{n}}\prod_{i=1}^3 S(\lambda(i), \mu(i)),
\end{equation}
and deduce from the Naruse hook length formula that 
\begin{equation}
    \frac{d_{\lambda\backslash\mu}}{d_{\lambda}}  = \frac{S(\lambda, \mu)}{n^{\downarrow k}}  \leq e^{24M\sqrt{n}}\frac{\prod_{i=1}^3 n_i^{\downarrow k_i} }{n^{\downarrow k}} \prod_{i=1}^3 \frac{d_{\lambda(i)\backslash\mu(i)}}{d_{\lambda(i)}}.
\end{equation}
Combining this with \eqref{eq: bound triple slicing on the dimension of mu}, we obtain the bound
\begin{equation}
    B_\theta(\lambda, \mu) e^{-24M\sqrt{n}} \leq  \pg \frac{k!}{\prod_{i=1}^3 k_i!} \pd^\theta \frac{\prod_{i=1}^3 n_i^{\downarrow k_i} }{n^{\downarrow k}} \prod_{i=1}^3 d_{\mu(i)}^\theta \frac{d_{\lambda(i)\backslash\mu(i)}}{d_{\lambda(i)}},
\end{equation}
which concludes the proof since $\prod_{i=1}^3 d_{\mu(i)}^\theta \frac{d_{\lambda(i)\backslash\mu(i)}}{d_{\lambda(i)}} = \prod_{i=1}^3 B_\theta(\lambda(i), \mu(i))$.
\end{proof}

\begin{lemma}\label{lem: central boxes have small hook lengths}
    Let $n\geq 1$ and $\lambda\vdash n$. Let $M\geq 1$ be an integer. Let $u$ be a box in $\lambda(3) = \lambda^{>M}$. Then $H(\lambda, u) \leq n/M$.
\end{lemma}
\begin{proof}
    Let us write $H(\lambda, u) = h(u) + v(u) -1$, where $h(u)$ (resp. $v(u)$) is the number of boxes on the right of (resp. above) $u$ in $\lambda$, including $u$. Then $\lambda$ must contain an $h(u)\times M$ (resp. $M\times v(u)$) rectangle below (resp. on the left of) the hook of $u$. Therefore  $M H(\lambda, u) \leq M(h(u) + v(u)) = h(u)M + Mv(u) \leq n$, which concludes the proof.
\end{proof}

\subsection{Specific bounds on biased skew dimensions}
Let us fix some notation for this section. Let $\delta>0$, $\eta>0$, and $\rho>0$. Let $\theta = 2/3$. Let $(M_n)$ be a sequence of integers $\geq 1$ such that $1\lll M_n \lll \ln \ln n$. Let $\lambda$ be a Young diagram such that $\lambda_1' \leq \lambda_1 \leq (1-\rho) |\lambda|$. Let $\mu \subset \lambda$ be a Young diagram such that $\eta |\lambda| \leq |\mu| \leq (1-\delta)|\lambda|$. We set $n=|\lambda|$, $k = |\mu|$ and $f = n-k$.
Finally, denote the parameters of the depth-$M_n$ triple decomposition of $\lambda$ and $\mu$ by $(\lambda(i), \mu(i), n_i, k_i)_{1\leq i\leq 3}$. 

\begin{lemma}\label{lem: bound on bsd leading to representations being negligible if all their hooks are small}
    There exist $\varepsilon_1\in (0,\rho)$ and an integer $N_0$ such that if $n\geq N_0$ and $\lambda_1 \leq \varepsilon_1 n$,
    \begin{equation}
    B_\theta(\lambda, \mu) \leq e^{-5n\ln(1/\delta)}.
\end{equation}
\end{lemma}
\begin{proof}
    By Lemmas \ref{lem:bound number superexcited inner diagrams brutal product}, \ref{lem: central boxes have small hook lengths}, and \ref{lem: comparaison puissances et puissances factorielles} (b), we have, using that the maximal hook length in $\lambda$ is at most $2\lambda_1$,
\begin{equation}
    \frac{S(\lambda, \mu)}{n^{\downarrow k}} \leq \frac{2^n H(\lambda, \mu)}{n^{\downarrow k}} \leq \frac{2^n (2\lambda_1)^k}{(n/e)^k} \leq 12^n \pg \frac{\lambda_1}{n} \pd^{\eta n}.
\end{equation}
and therefore, using that $k\gtrsim n$ and recalling \eqref{eq: bound on biased skew dimensions by 1}, 
\begin{equation}
    B_\theta(\lambda, \mu) \leq \pg \frac{S(\lambda, \mu)}{n^{\downarrow k}} \pd^{1/3} \leq \pg O(1) \frac{\lambda_1}{n}\pd^{\Omega_+(n)},
\end{equation}
which concludes the proof.
\end{proof}

Let $\varepsilon_1$ and $N_0$ as in the previous lemma. The following lemma is intended to be applied to low dimensional diagrams $\lambda$, that is in this context, diagrams whose deep center $\lambda^{>M_n}$ has size at most a small constant times $n$.

\begin{lemma}\label{lem: exponential bound with alpha intended for diagrams with small center}
    There exist $\alpha>0$ and $N_1 \geq N_0$ such that if $n\geq N_1$ and $\varepsilon_1 n\leq \lambda_1 \leq (1-\varepsilon_1)n$ then we have
   \begin{equation}
    B_\theta(\lambda,\mu) \leq e^{-2\alpha n \ln(1/\delta)}.
\end{equation}
\end{lemma}
\begin{proof}
    
Bounding $B_\theta(\lambda(i), \mu(i))$ by 1 for $i\in \ag 1,2,3\ad$, we obtain by Proposition \ref{prop: borne dimensions gauches biaisées par découpe profonde triple}
\begin{equation}
\begin{split}
    B_\theta(\lambda, \mu)e^{-24M\sqrt{n}} & \leq \pg \frac{k!}{\prod_{i=1}^3 k_i!} \pd^\theta \frac{\prod_{i=1}^3 n_i^{\downarrow k_i} }{n^{\downarrow k}} \\ & = \pg \frac{\prod_{i=1}^3 \binom{n_i}{k_i}}{\binom{n}{k}} \pd^\theta \pg \frac{\prod_{i=1}^3 n_i^{\downarrow k_i} }{n^{\downarrow k}} \pd^{1 -\theta} \leq \pg \frac{\prod_{i=1}^3 n_i^{\downarrow k_i} }{n^{\downarrow k}} \pd^{1 -\theta}.
\end{split}
\end{equation}
Moreover, $n_2^{\downarrow k_2}n_3^{\downarrow k_3} \leq (n_2 + n_3)^{\downarrow k_2 + k_3} = (n-n_1)^{\downarrow k-k_1}$, so
\begin{equation}
    \frac{\prod_{i=1}^3 n_i^{\downarrow k_i} }{n^{\downarrow k}} \leq \frac{n_1^{\downarrow k_1}(n-n_1)^{\downarrow k-k_1}}{n^{\downarrow k}},
\end{equation}
which is $e^{-\Omega_+(n)}$ since $\varepsilon_1 n\leq \lambda_1 \leq (1-\varepsilon_1)n$ and $k = \Omega(n)$.
Finally, since $e^{-24M_n\sqrt{n}} = e^{o(n)}$ and $\theta<1$, we deduce that $B_\theta(\lambda, \mu) = e^{-\Omega_+(n)}$, which concludes the proof.
\end{proof}

Let $N_1$ as in the previous lemma.

\begin{lemma}\label{lem: f over n bound on biased dimensions for diagrams with large center}
     Let $\varepsilon'>0$. There exists $N_2\geq N_1$ such that if $n\geq N_2$, $\varepsilon_1 n\leq \lambda_1 \leq (1-\varepsilon_1)n$, and $n_3\geq \varepsilon' n$, then
     \begin{equation}
         B_\theta(\lambda, \mu) \leq \pg \frac{f}{n}\pd^{n_3}.
     \end{equation}
\end{lemma}
\begin{proof}
First assume that $k_3 \leq n/\sqrt{\ln M_n}$. 
By Proposition \ref{prop: borne dimensions gauches biaisées par découpe profonde triple}, upper bounding all three $B_\theta(\lambda(i), \mu(i))$ by 1, and also using that $\theta \leq 1$, we get
\begin{equation}
    B_\theta(\lambda, \mu) \leq  \frac{k!}{k_1!k_2!k_3!}\frac{n_1^{\downarrow k_1}n_2^{\downarrow k_2}n_3^{\downarrow k_3}}{n^{\downarrow k}} = \frac{\binom{n_1}{k_1}\binom{n_2}{k_2}\binom{n_3}{k_3}}{\binom{n}{k}}.
\end{equation}
Since $k_3 = o(n)$ and $n_3\gtrsim n$, we have $\binom{n_3}{k_3} \leq (en_3/k_3)^{k_3} =  e^{o(n)}$ so
\begin{equation}
    B_\theta(\lambda, \mu) \leq  e^{o(n)} \frac{\binom{n_1}{k_1}\binom{n_2}{k_2}}{\binom{n}{k}} \leq e^{o(n)} \frac{\binom{n_1+n_2}{k_1+k_2}}{\binom{n}{k}}=  e^{o(n)} \frac{\binom{n-n_3}{k_1+k_2}}{\binom{n}{k_1 + k_2}} = e^{o(n)} \pg \frac{(n-n_3)^{\downarrow k_1+ k_2 }}{n^{\downarrow k_1+ k_2}}\pd,
\end{equation}
and
\begin{equation}
\begin{split}
     \pg \frac{(n-n_3)^{\downarrow k_1+ k_2 }}{n^{\downarrow k_1+ k_2}}\pd =  \pg \frac{(n-(k_1+k_2))^{\downarrow n_3 }}{n^{\downarrow n_3}}\pd = \pg \frac{(f + k_3)^{\downarrow n_3 }}{n^{\downarrow n_3}}\pd
     &\leq \pg \frac{f + k_3}{n}\pd^{n_3}e^{-\Omega_+(n)} \\
     & = e^{o(n)} \pg \frac{f}{n}\pd^{n_3} e^{-\Omega_+(n)},
\end{split}
\end{equation}
so overall we have $B_\theta(\lambda, \mu) \leq e^{-\Omega_+(n)} \pg f/n\pd^{n_3} \leq \pg f/n\pd^{n_3}$, for $n$ large enough.

Now assume that $k_3 > n/\sqrt{\ln M_n}$. We proceed similarly as in the proof of Lemma \ref{lem: bound on bsd leading to representations being negligible if all their hooks are small}. By Lemmas \ref{lem:bound number superexcited inner diagrams brutal product}, \ref{lem: central boxes have small hook lengths}, and \ref{lem: comparaison puissances et puissances factorielles} (b), and since $M_n \to \infty$ as $n\to \infty$, we have
\begin{equation}
\begin{split}
     \frac{S(\lambda(3), \mu(3))}{n_3^{\downarrow k_3}} \leq \frac{2^{n_3} (n/M_n)^{k_3}}{(n_3/e)^{k_3}} = e^{O(n)} \pg \frac{O(1)}{M_n} \pd^{k_3} & = e^{O(n)}\pg \frac{O(1)}{M_n} \pd^{\Omega_+(n/\sqrt{\ln M_n})} \\
     & = e^{\Omega_+(n\sqrt{ \ln M_n})} = e^{-\omega_+(n)}.
\end{split}
\end{equation}
Since $\theta<1$, we deduce that $B_\theta(\lambda(3), \mu(3)) = e^{-\omega_+(n)}$, and it follows from Proposition \ref{prop: borne dimensions gauches biaisées par découpe profonde triple} (and since $M_n = O(\sqrt{n})$) that
\begin{equation}
    B_\theta(\lambda, \mu) \leq e^{24M_n\sqrt{n}} B_\theta(\lambda(3), \mu(3)) = e^{-\omega_+(n)},
\end{equation}
which, since $f\gtrsim n$ and $n_3 \gtrsim n$, is at most $\pg f/n \pd^{n_3}$ for $n$ large enough. This concludes the proof. 
\end{proof}

\subsection{Characters bounds}
We now convert bounds on biased skew dimensions into character bounds.

\begin{proposition}\label{prop: character bound for high level representations and macroscopic support sizes}
Let $\delta>0$, $\eta>0$, $\rho >0$, and set $\theta = 2/3$. Let $\varepsilon_0>0$. There exists $N_3$ such that the following holds.  Let $n\geq N_3$, $\eta n \leq k \leq (1-\delta)n$, and set $f=n-k$. Let $\lambda\vdash n$ such that $\lambda_1'\leq \lambda_1 \leq (1-\rho)n$. Finally, let $\sigma \in \kS_n$ with $f$ fixed points. Then
\begin{equation}
    \bg\chi^{\lambda}(\sigma)\bd \leq d_\lambda^{-(2-\varepsilon_0)\frac{\ln(n/f)}{\ln n}}.
\end{equation}
\end{proposition}
\begin{proof}
    Set $t = \frac{1}{2-\varepsilon_0}\frac{\ln n}{\ln(n/f)}$. Since $f\geq \delta n$, we have $t \geq t_0 := \frac{1}{2}\frac{\ln n}{\ln(1/\delta)}$. We want to prove that $d_\lambda \bg\chi^{\lambda}(\sigma)\bd^t \leq 1$. 
    Here $k\geq \eta n$, so by Lemma \ref{lem: bound on MN rewritten with ALS renomalized and simplified lemme fusionné}, for $n$ large enough we have
    \begin{equation}
        \bg \chi^\lambda(\sigma) \bd  
        \leq e^{\Cpart \sqrt{n}} \max_{\mu \subset_{\vdash k} \lambda} B_\theta(\lambda, \mu).
    \end{equation}
As previously, let $(M_n)$ be a sequence of integers such that $1\lll M_n \lll \ln \ln n$, and denote the parameters of its depth-$M_n$ triple slicing by $(\lambda(i), n_i)_{1\leq i\leq 3}$.
Let $\varepsilon_1$ as in Lemma \ref{lem: bound on bsd leading to representations being negligible if all their hooks are small}, $\alpha$ as in Lemma \ref{lem: exponential bound with alpha intended for diagrams with small center} and set $\varepsilon' = \alpha$. We split the proof into three cases.
   
\fbox{$\lambda_1\leq \varepsilon_1 n$}

\medskip

\noindent By Lemma \ref{lem: bound on bsd leading to representations being negligible if all their hooks are small} and since $d_\lambda \leq \sqrt{n!}\leq n^{n/2}$, for $n$ large enough we have
\begin{equation}
\begin{split}
   d_\lambda \bg\chi^{\lambda}(\sigma)\bd^t \leq  d_\lambda \bg\chi^{\lambda}(\sigma)\bd^{t_0} & \leq n^{n/2} \pg  e^{\Cpart \sqrt{n}} \max_{\mu \subset_{\vdash k} \lambda} B_\theta(\lambda, \mu) \pd^{t_0} \\
   & \leq e^{O(\sqrt{n}\ln n)} n^{n/2} e^{-(5/2)n\ln n} = e^{(-2+o(1))n\ln n}\leq 1.
\end{split}
\end{equation}

\fbox{$\lambda_1> \varepsilon_1 n$ and $n_3 \leq \varepsilon' n$}

\medskip

\noindent Here we use Lemma \ref{lem: bound dimension M N} and Lemma \ref{lem: exponential bound with alpha intended for diagrams with small center}. Since $M_n = n^{o(1)}$, this gives
\begin{equation}
  d_\lambda \bg\chi^{\lambda}(\sigma)\bd^t \leq  d_\lambda \bg\chi^{\lambda}(\sigma)\bd^{t_0} \leq (4M_n)^n n^{n_3/2} e^{-\alpha n \ln n} \leq e^{(-\alpha/2 + o(1))n\ln n} \leq 1.
\end{equation}

\fbox{$\lambda_1> \varepsilon_1 n$ and $n_3 > \varepsilon' n$}

\medskip

\noindent By Lemma \ref{lem: bound dimension M N} and Lemma \ref{lem: f over n bound on biased dimensions for diagrams with large center}, and using again that $(4M_n)^n = e^{o(n\ln n)}$ and $e^{\Cpart\sqrt{n}t} = e^{o(n\ln n)} $, we obtain
\begin{equation}
    d_\lambda \bg\chi^{\lambda}(\sigma)\bd^t \leq e^{o(n\ln n)} n^{n_3/2} (f/n)^{n_3 \frac{1}{2-\varepsilon_0} \frac{\ln n}{\ln (n/f)}} = n^{\pg \frac{1}{2} - \frac{1}{2-\varepsilon_0} + o(1)\pd \ln n}
    \leq 1.
\end{equation}
Hence we have covered all cases, and the proof is complete.
\end{proof}

\section{Bounds for low-level representations}
\subsection{The strategy of Diaconis and Shahshahani for transpositions}
For transpositions we have the following elegant formula:
\begin{equation}\label{eq:magical formula characters transpositions}
    \chi^\lambda(\tau) = \frac{1}{\binom{n}{2}}\sum_i \left( \binom{\lambda_i}{2} - \binom{\lambda'_i}{2}\right),
\end{equation}
where $\lambda\vdash n$ and $\tau \sim [2, 1^{n-2}]$. One can also derive \eqref{eq:magical formula characters transpositions} from the Murnaghan--Nakayama rule and the Naruse hook length formula.
Finding asymptotic bounds for \eqref{eq:magical formula characters transpositions} for all diagrams is a key technical part of the proof of cutoff for random transpositions by Diaconis and Shahshahani~\cite{DiaconisShahshahani1981}. To do so, they partitioned the set of diagrams according to how many boxes they have over the first row, in other words they did a first row decomposition as in Figure \ref{fig:lambdadown1slicing}, and then used the monotonicity of the formula \eqref{eq:magical formula characters transpositions} to bound the characters. This led to a character bound which was sufficient if $\lambda$ has a long first row (that is, if $\lambda_1 \geq n/2$).

\begin{figure}[!ht]
\begin{center}
\begin{ytableau}
        \none &*(red!60)\\
        \none &*(red!60)&*(red!60)\\
        \none &*(red!60)&*(red!60)&\none &\none&\none[\textcolor{red!60}{\lambda(4) = \lambda_{\geq 2}}]\\
        \none &*(red!60)&*(red!60)&*(red!60) \\
        \none &*(red!60)&*(red!60)&*(red!60)&*(red!60) &\none &\none&\none&\none&\none&\none&\none &\none&\none[\textcolor{blue!60}{\lambda(1) = \lambda_1}]\\
        \none &*(blue!60)&*(blue!60)&*(blue!60)&*(blue!60)&*(blue!60)&*(blue!60)&*(blue!60)&*(blue!60)&*(blue!60)&*(blue!60)&*(blue!60)&*(blue!60)&*(blue!60)&*(blue!60) &\none 
    \end{ytableau}
\end{center}
\caption{The first row decomposition for $\lambda = [14,4,3,2,2,1]$.}
\label{fig:lambdadown1slicing}
\end{figure}

\medskip

In this section we use hybrid notation. Let $\lambda$ be a Young diagram. We denote again by $\lambda(1)$ the first row of $\lambda$, $n_1 = |\lambda(1)|$ or $\lambda_1$ its size. Now we set $\lambda(4) = \lambda\backslash \lambda(1)$, and denote by $n_4 = |\lambda(4)|$ or $r(\lambda)$ its size.
Note that $\lambda(4)$ corresponds to $\lambda(2) \cup \lambda(3)$ from Section \ref{s: bounds for high-level representations}.

\medskip

For general conjugacy classes there is no usable formula such as \eqref{eq:magical formula characters transpositions}, nor monotonicity, and first row decompositions do not lead to character bounds that are sufficient for $0\leq n_4\leq n/2$. We can however prove bounds for diagrams $\lambda$ with a long enough first row. By symmetry of all our arguments (with respect to transposition of diagrams), the bounds extend to diagrams with a long first column.

\subsection{Splitting the characters into two parts}

Let $n$ and $k$ be integers such that $2\leq k\leq n-1$. Set $f = n-k$. Let $\lambda\vdash n$ such that $\lambda_1 \geq k$. Let $\sigma \in \kS_n$ with support size $k$. Recall the branching rule \eqref{eq:branching rule}:
\begin{equation}
    \chi^\lambda(\sigma) = \sum_{\mu \subset_{\vdash k} \lambda} \ch^\mu(\sigma^*) \frac{d_{\lambda\backslash \mu}}{d_\lambda}.
\end{equation}
For low-level representations, applying directly the triangle inequality in the branching rule does not lead to precise bounds. One has to first extract the two diagrams $\mu$ that matter (the most), namely $\mu = [k]$ and $\mu = [k-1,1]$, and crucially also take into account the sign compensation coming from $\mu = [k-1,1]$. We therefore set 
\begin{equation}\label{eq: def W Z}
    W(\lambda,\sigma) = \ch^{[k]}(\sigma^*) \frac{d_{\lambda \backslash [k]}}{d_\lambda} + \ch^{[k-1,1]}(\sigma^*) \frac{d_{\lambda \backslash [k-1,1]}}{d_\lambda} \quad \text{ and } \quad  Z(\lambda,\sigma) = \chi^\lambda(\sigma) - W(\lambda, \sigma).
\end{equation}

\begin{lemma}\label{lem: W form with excited sums}
Let $n$ and $k$ be integers such that $2\leq k\leq n-1$. Set $f = n-k$. Let $\lambda\vdash n$ such that $\lambda_1 \geq k$. Then
\begin{equation}
        W(\lambda, \sigma) = \frac{1}{n^{\downarrow k}}(S(\lambda, [k]) - S(\lambda, [k-1,1])).
    \end{equation}
\end{lemma}
\begin{proof}
    For any permutation $\tau\in \kS_k$, we have $\ch^{[k]}(\tau) = 1$ and $\ch^{[k-1,1]}(\tau) = f_1(\tau) - 1$, where we recall that $f_1(\tau)$ denotes the number of fixed points of $\tau$. Therefore $\ch^{[k]}(\sigma^*) = 1$ and $\ch^{[k-1,1]}(\sigma^*) = f_1(\sigma^*) - 1 = -1$ since $\sigma^*$ is fixed-point-free by definition. It follows that $W(\lambda, \sigma) = \frac{d_{\lambda \backslash [k]}}{d_\lambda} - \frac{d_{\lambda \backslash [k-1,1]}}{d_\lambda}$, which rewrites as desired by the Naruse hook length formula \eqref{e:NHLF rewritten}.
\end{proof}
The previous lemma shows that $W(\lambda, \sigma)$ depends only on the support size of $\sigma$. From now on we therefore write $W_\lambda(k)$ for $W(\lambda, \sigma)$.

\subsection{Bounding excited sums with geometric sums}

We first bound excited sums $S(\lambda, \mu)$ when $\mu$ is a row diagram.

\begin{lemma}\label{lem: exponential bound excited diagrams for row diagrams}
Let $n$ and $k$ be integers such that $2\leq k\leq n-1$. Set $f = n-k$. Let $\lambda\vdash n$ such that $\lambda_1 \geq k$. Denote $r = n-\lambda_1$. Assume that $r \leq f/4$ and let $0\leq \ell \leq k$. Then
    \begin{equation}
        S(\lambda, [\ell]) \leq e^{6r/f}\lambda_1^{\downarrow \ell}.  
    \end{equation}
\end{lemma}
\begin{proof}
First, partitioning the set of excited diagrams depending on how many boxes they have over the first row, we have 
\begin{equation}
    S(\lambda, [\ell]) = H(\lambda, [\ell]) + \sum_{1\leq i \leq \ell} H(\lambda, [\ell-i])S(\lambda, \nu(\ell, i)),
\end{equation}
where $\nu(\ell,i) = \Shift_{\ell-i+1, 1}([i])$, as represented in Figure \ref{fig:initial diagram for flat diagrams}. 

\begin{figure}[!ht]
    \begin{center}
    \begin{ytableau}
        \none &\\
        \none &&&\\
        \none &&&&&&&&&&\\
        \none &&&&&&&*(red!90)&*(red!90)&*(red!90)&&& \\
        \none &*(orange!90)&*(orange!90)&*(orange!90)&*(orange!90)&*(orange!90)&&&&&&&&&&&&&&&&\none[\cdots]&&&&& \\
    \end{ytableau}
\end{center}

\caption{In orange: the diagram $[\ell-i]$ (with $\ell = 8$ and $i = 3$). In red: the diagram $\nu(\ell, i) = \Shift_{\ell-i+1, 1}([i])$. Both diagrams are represented within the diagram  $\lambda = [n-25, 12, 9,3,1]$.}
\label{fig:initial diagram for flat diagrams}
\end{figure}
\noindent Moreover, for each $0\leq i\leq \ell$, 
\begin{equation}
   S(\lambda, \nu(\ell, i)) = \frac{d_{\lambda\backslash \nu(\ell,i)}}{d_\lambda} r^{\downarrow i} \leq r^{\downarrow i}, 
\end{equation}
and by Lemma \ref{lem: slicing from ALS} (b), we have $H(\lambda, [\ell-i]) \leq e^{2r/n} \lambda_1^{\downarrow \ell-i} \leq e^{2r/f} \lambda_1^{\downarrow \ell-i}$.
Therefore for each  $0\leq i\leq \ell$ we have, using that $\lambda_1 - \ell = n-r -\ell \geq n-r-k=f-r \geq f/2$, 
\begin{equation}\label{eq: exponential bound excited diagrams for row diagrams inter}
    \frac{H(\lambda, [\ell-i])S(\lambda, \nu(\ell, i))}{\lambda_1^{\downarrow \ell}} e^{-2r/f} \leq  \frac{\lambda_1^{\downarrow \ell-i}r^{\downarrow i}}{\lambda_1^{\downarrow \ell}} = \frac{r^{\downarrow i}}{(\lambda_1-\ell + i)^{\downarrow i}} \leq \frac{r^{i}}{(\lambda_1-\ell + i)^{ i}} \leq \pg \frac{2r}{f}\pd^i.
\end{equation}
Also, we have $\sum_{i=0}^\ell (2r/f)^i \leq 1 + (2r/f)\sum_{i\geq 0}(1/2)^i = 1 + 4r/f \leq e^{4r/f}$, since $2r/f \leq 1/2$. We conclude that 
\begin{equation}
    \frac{S(\lambda, [\ell])}{\lambda_1^{\downarrow \ell}} = \frac{H(\lambda, [\ell])}{\lambda_1^{\downarrow \ell}} +  \sum_{i=1}^\ell \frac{H(\lambda, [\ell-i])S(\lambda, \nu(\ell, i))}{\lambda_1^{\downarrow \ell}} \leq e^{2r/f} \sum_{i=0}^\ell (2r/f)^i \leq e^{6r/f}. \qedhere
\end{equation}
\end{proof}
Let us generalize this to diagrams which may have boxes over the first row.

\begin{lemma}\label{lem: exponential bound excited diagrams}
    Let $n$ and $k$ be integers such that $2\leq k\leq n-1$. Set $f = n-k$. Let $\lambda\vdash n$ such that $\lambda_1 \geq k$ and $r \leq f/4$, where $r = n-\lambda_1$. Let $\mu \subset_{\vdash k} \lambda$, and denote $j = k-\mu_1$. Then
    \begin{equation}
        \frac{S(\lambda, \mu)}{\lambda_1^{\downarrow k}} \leq e^{6r/f} \pg \frac{2r}{f} \pd^j.
    \end{equation}
\end{lemma}
\begin{proof}
    Set $\ell := \mu_1 = k-j$. By Lemmas \ref{lem: fragmentation bound for excited sums} and \ref{lem: exponential bound excited diagrams for row diagrams}, we have
\begin{equation}
S(\lambda, \mu) \leq S(\lambda, [\ell])S(\lambda, \mu_{\geq 2}) \leq e^{6r/f} \lambda_1^{\downarrow \ell}  r^{\downarrow j}.
\end{equation}
Moreover, using again that $\lambda_1 - \ell + j \geq f/2 \geq r$, we get
\begin{equation}\label{eq: exponential bound excited diagrams factorielles descendantes}
    \frac{\lambda_1^{\downarrow \ell}  r^{\downarrow j}}{\lambda_1^{\downarrow k}} = \frac{r^{\downarrow j}}{(\lambda_1-\ell + j)^{\downarrow j}} \leq \pg \frac{2r}{f} \pd^j.
\end{equation}
This concludes the proof.
\end{proof}

\subsection{Bounds on the main term}

\begin{lemma}\label{lem: W est positif}
    Let $n$ and $k$ be integers such that $2\leq k\leq n-1$. Set $f = n-k$. Let $\lambda\vdash n$ such that $\lambda_1 \geq k$ and $r \leq f/6$, where $r = n-\lambda_1$. Then $W_\lambda(k)\geq 0$.
\end{lemma}
\begin{proof}
On the one hand, we have $S(\lambda, [k])/\lambda_1^{\downarrow k} \geq H(\lambda, [k])/\lambda_1^{\downarrow k} \geq 1$. On the other hand, by Lemma \ref{lem: exponential bound excited diagrams} we have
     \begin{equation}
         \frac{S(\lambda, [k-1,1])}{\lambda_1^{\downarrow k}} \leq e^{6r/f}\frac{2r}{f} \leq e \frac{2}{6}\leq 1.
     \end{equation}
It follows that $S(\lambda, [k]) - S(\lambda, [k-1,1])\geq 0$, and therefore that $W_\lambda(k)\geq 0$ by Lemma \ref{lem: W form with excited sums}.
\end{proof}

For the next lemma we need additional notation on Young diagrams. Let $\lambda$ be a Young diagram and $1\leq i\leq \lambda_1$. Then $u_i = u_i(\lambda) := H(\lambda, (i,1)) - (\lambda_1 - i + 1)$ is the number of boxes in the $i$-th column of $\lambda$ that are strictly above the box with coordinates $(i,1)$, as represented in Figure \ref{fig: definition u i}. For $1\leq j \leq \lambda_1$, we also denote $u_{\leq j} = \sum_{1\leq i\leq j} u_i$ and $u_{\geq j} = \sum_{j\leq i\leq \lambda_1} u_i$.

\begin{figure}
\begin{center}
\begin{ytableau}
    \none   & \\
    \none & \\
    \none   & &\\
    \none   & & & & \\
    \none   & & & & &\\
    \none   & & *(orange)& &&&& \\
    \none &\none  &\none[\textcolor{orange}{b}]  &\none  &\none  &\none  &\none  
\end{ytableau}
\begin{ytableau}
    \none   & \\
    \none & \\
    \none   & &*(pink!40)\\
    \none   & & *(pink!40) & & \\
    \none   & & *(pink!40) & & &\\
    \none   & & *(pink!40)& *(pink!40)& *(pink!40)& *(pink!40)& *(pink!40)& *(pink!40)  \\
    \none &\none  &\none[\textcolor{pink!60}{H(\lambda, b)}]  &\none  &\none  &\none  &\none  
\end{ytableau}
\begin{ytableau}
    \none   & \\
    \none & &\none[\textcolor{green!60}{u_i}]\\
    \none   & &*(green!40)\\
    \none   & & *(green!40) & & \\
    \none   & & *(green!40) & & &\\
    \none   & & *(blue!60)& *(blue!60)& *(blue!60)& *(blue!60)& *(blue!60)& *(blue!60)  \\
    \none &\none  &\none  &\none[\textcolor{blue!60}{\lambda_1-(i-1)}]  &\none  &\none  &\none  
\end{ytableau}
\end{center}

  \caption{Representation of $u_i$ for $i=2$ in the diagram $\lambda = [7,5, 4, 2,1,1].$}
\label{fig: definition u i}
\end{figure}

\begin{lemma}\label{lem: monotonicity lemma for low frequencies}
    Let $n$ and $k$ be integers such that $2\leq k\leq n-1$. Set $f = n-k$. Let $\lambda\vdash n$ such that $\lambda_1 \geq k$. Then
    \begin{equation}
        H(\lambda, [k-1])(\lambda_1 -(k-1) - u_{\leq k-1}) \leq \lambda_1^{\downarrow k}.
    \end{equation}
\end{lemma}
\begin{proof}
By definition we have $H(\lambda, [k-1]) = \prod_{1\leq i\leq k-1} (\lambda_1 - (i-1) + u_i)$. Therefore the quantity we want to bound can be rewritten as
 \begin{equation}
         \pg \prod_{1\leq i\leq k-1} \lambda_1 - (i-1) + u_i  \pd (\lambda_1 -(k-1) - u_{\leq k-1} ).
\end{equation}
Moreover moving a box of $\lambda$ from above the first row to the first row (weakly) increases all terms $\lambda_1 + u_i -(i-1)$, while $\lambda_1 -u_{\leq k-1}  - (k-1)$ does not change (since $\lambda_1$ increases by 1, but so decreases $u_{\leq k-1}$). Therefore the diagram $\lambda = [n]$ is maximal for the product above, and we conclude that
\begin{equation}
         \pg \prod_{1\leq i\leq k-1} \lambda_1 - (i-1) + u_i  \pd (\lambda_1 -(k-1) - u_{\leq k-1} ) \leq \lambda_1^{\downarrow k}. \qedhere
\end{equation}
\end{proof}

\begin{lemma}\label{lem: upper bound on W}
    Let $n$ and $k$ be integers such that $2\leq k\leq n-1$. Set $f = n-k$. Let $\lambda\vdash n$ such that $\lambda_1 \geq k$ and $r \leq f/6$, where $r = n-\lambda_1$. Then 
    \begin{equation}
        W_\lambda(k) \leq \pg \frac{f}{n} \pd^r e^{12r^2/f^2}.
    \end{equation}
\end{lemma}
\begin{proof}

Recall that $\nu(k,i) = \Shift_{k-i+1, 1}([i])$, as in Figure \ref{fig:initial diagram for flat diagrams}. Let us first upper bound $S(\lambda, [k])$. Observe that 
\begin{equation}
\begin{split}
     H(\lambda, [k]) + H(\lambda, [k-1]) S(\lambda, \nu(k,1)) & = H(\lambda, [k-1])S(\lambda, (k,1)) \\
      & = H(\lambda, [k-1])(\lambda_1 -(k-1) + u_{\geq k}).
\end{split}
\end{equation}
Then applying \eqref{eq: exponential bound excited diagrams for row diagrams inter} with $\ell = k$ for the terms $j\geq 2$ we get
\begin{equation}
\begin{split}
      S(\lambda, [k]) & \leq  H(\lambda, [k]) + H(\lambda, [k-1]) S(\lambda, \nu(k,1)) + \sum_{2\leq j \leq k} H(\lambda, [k-j]) S(\lambda, \nu(k,j)) \\
      & = H(\lambda, [k-1])(\lambda_1 -(k-1)+ u_{\geq k}) + \sum_{2\leq j \leq k} H(\lambda, [k-j]) S(\lambda, \nu(k,j)) \\
      & \leq H(\lambda, [k-1])(\lambda_1 -(k-1)+ u_{\geq k}) + e^{6r/f} \pg 2r/f \pd^2 \lambda_1^{\downarrow k}.
\end{split}
\end{equation}
On the other hand we have
\begin{equation}
    S(\lambda, [k-1,1]) \geq H(\lambda, [k-1])S(\lambda, (2,1)) = H(\lambda, [k-1])r.
\end{equation}
Since $r- u_{\geq k} = u_{\leq k-1}$, it follows from Lemma \ref{lem: monotonicity lemma for low frequencies} that
\begin{equation}
    S(\lambda, [k]) - S(\lambda, [k-1,1]) - e^{6r/f} \pg \frac{2r}{f} \pd^2 \lambda_1^{\downarrow k} \leq H(\lambda, [k-1])(\lambda_1 -(k-1)- u_{\leq k-1}) \leq  \lambda_1^{\downarrow k},
\end{equation}
which can be rewritten as
\begin{equation}
    W_k(\lambda) \leq \frac{\lambda_1^{\downarrow k}}{n^{\downarrow k}}\pg 1 + e^{6r/f} \pg \frac{2r}{f} \pd^2 \pd.
\end{equation}
Finally, we have 
\begin{equation}\label{eq: borne f sur n puissance r}
    \frac{\lambda_1^{\downarrow k}}{n^{\downarrow k}} = \frac{f^{\downarrow r}}{n^{\downarrow r}}\leq (f/n)^r,
\end{equation}
and since $r\leq f/6$, we also have 
\begin{equation}
    1 + e^{6r/f} \pg 2r/f \pd^2 \leq 1+ 3\pg 2r/f\pd^2 = 1+12 \pg r/f \pd^2 \leq e^{12r^2/f^2},
\end{equation}
which concludes the proof.
\end{proof}

\subsection{Bounds on the error term}

\begin{lemma}\label{lem: bound on biased skew dimensions for each j low level}
   Let $0\leq \theta \leq 1$. Let $n$ and $k$ be integers such that $2\leq k\leq n-1$. Set $f = n-k$. Let $\lambda\vdash n$ such that $\lambda_1 \geq k$ and $n-\lambda_1 \leq f/6$. Let $(\lambda(i), \mu(i), n_i, k_i)_{i\in \ag 1,4 \ad }$ be the parameters of the first row decompositions as in Figure~\ref{fig:lambdadown1slicing}. Then
    \begin{equation}
         B_\theta(\lambda, \mu) = d_\mu^\theta \frac{S(\lambda, \mu)}{n^{\downarrow k}} \leq e^{6n_4/f} \binom{k}{k_4}^{\theta} \frac{n_1^{\downarrow k_1} n_4^{\downarrow k_4}}{n^{\downarrow k}} B_\theta(\lambda(1), \mu(1))B_\theta(\lambda(4), \mu(4)).
    \end{equation}
In particular, recalling that $\theta_k = \frac{1}{2} \pg 1+ \frac{\Cvirt}{\ln k} \pd$ and assuming also that $k_4\geq 2$, we have 
\begin{equation}
    p(j) B_{\theta_{k}}(\lambda, \mu) \leq \pg \frac{f}{n}\pd^{r} \pg C_0 \frac{kr^2}{jf^2}\pd^{j/2}.
\end{equation}
where $r = n-\lambda_1 = n_4$, $j = |\mu_{\geq 2}| = k_4$, $C_0= 16e^{\Cvirt/2 + 2}$ is a universal constant, and we recall that $p(j)$ is the number of integer partitions of $j$.
\end{lemma}
\begin{proof}
    In this proof we use hybrid notation, with $n_4 = r$ and $k_4 = j$. The arguments are similar to those of the proof of Proposition \ref{prop: borne dimensions gauches biaisées par découpe profonde triple}.
    
    First, we have $S(\lambda, \mu) \leq S(\lambda, \mu(1)) S(\lambda, \mu(4)) = S(\lambda, \mu(1)) S(\lambda(4), \mu(4))$.
Moreover by Lemma~\ref{lem: exponential bound excited diagrams for row diagrams}, we have $S(\lambda, \mu(1)) \leq e^{6r/f} \lambda_1^{\downarrow \mu_1} \leq e^{6r/f} S(\lambda(1), \mu(1))$. Finally, $d_{\mu} \leq \binom{k}{j} d_{\mu(1)}d_{\mu(4)}$. The first point follows.
Now we have $\binom{k}{j} \leq k^j$ and $\binom{k}{j} \leq (ek/j)^j$, so
\begin{equation}
    \binom{k}{j}^{\theta_k} =  \binom{k}{j}^{\Cvirt/(2\ln k)}  \binom{k}{j}^{1/2} \leq \pg k^j\pd ^{\Cvirt/(2\ln k)} \pg (ek/j)^j\pd^{1/2} = \pg e^{\Cvirt/2 + 1} k/j\pd^{j/2}.
\end{equation}
Moreover, recalling \eqref{eq: exponential bound excited diagrams factorielles descendantes} and \eqref{eq: borne f sur n puissance r}, we have 
\begin{equation}
     \frac{n_1^{\downarrow k_1}n_4^{\downarrow k_4}}{n^{\downarrow k}} = \frac{n_1^{\downarrow k}}{n^{\downarrow k}} \frac{n_1^{\downarrow k_1} n_4^{\downarrow k_4}}{n_1^{\downarrow k}} = \frac{\lambda_1^{\downarrow k}}{n^{\downarrow k}} \frac{\lambda_1^{\downarrow k-j} r^{\downarrow j}}{\lambda_1^{\downarrow k}} \leq \pg \frac{f}{n}\pd^{r} \pg \frac{2r}{f} \pd^j
\end{equation}
and we have $e^{6r/f}\leq e\leq e^{j/2}$ by assumption. Finally, bounding the biased dimensions by 1 (recalling \eqref{eq: bound on biased skew dimensions by 1}), and since $p(j) \leq 2^j$, we conclude that
\begin{equation}
    p(j) B_{\theta_k}(\lambda, \mu) \leq 2^j e^{j/2} \pg e^{\Cvirt/2 + 1} \frac{k}{j}\pd^{j/2} \pg \frac{f}{n}\pd^{r} \pg \frac{2r}{f} \pd^j =  \pg \frac{f}{n}\pd^{r} \pg C_0 \frac{kr^2}{jf^2}\pd^{j/2}. \qedhere
\end{equation}
\end{proof}

\begin{lemma}
\label{lem: bornes technique en amont pour les sommes de B theta lambda mu à j fixé pour les diagrammes fins}
Let $\alpha > 0$.
Then,
\begin{equation}
    \sum_{j\ge2}
    (\alpha/j)^{j/2}
\le
    e^{8\alpha} - 1.
\end{equation}
\end{lemma}
\begin{proof}
Set for this proof
\begin{equation}
    S(\alpha) = \sum_{j\geq 2} (\alpha/j)^{j/2},
\quad\quad \text{and} \quad \quad
    u_\alpha(j) = (\alpha/j)^{j/2} \quad \text{ for any integer $j\geq 1$}.
\end{equation}
Assume first that $\alpha \le 1$.
Then $j\mapsto u_\alpha(j)$ is decreasing, so 
\begin{equation}\label{eq: diagrammes fins passage impairs à pair dans somme sous-géométrique}
    S(\alpha) = \sum_{j\geq 2} u_\alpha(j) = \sum_{m\geq 1} \pg u_\alpha(2m) + u_\alpha(2m+1) \pd\leq 2 \sum_{m\geq 1} u_\alpha(2m).
\end{equation}
Moreover, since $\alpha\leq 1$ we have
\begin{equation}
   \sum_{m\geq 1} u_\alpha(2m) = \sum_{m\geq 1} \pg \frac{\alpha/2}{m} \pd^m \leq \sum_{m\geq 1} \pg \alpha/2 \pd^m \leq \alpha \sum_{m\geq 1} \pg 1/2 \pd^m = \alpha,
\end{equation}
and we conclude that $S(\alpha) \leq 8\alpha \le e^{8\alpha} - 1$.

Assume now that $\alpha\geq 1$. Seen as a function over the positive real numbers, the function $j\in \bbR^*_+ \mapsto (\alpha/j)^{j/2}$ is maximized at $j = \alpha/e$, where it takes the value $e^{\alpha/(2e)}$. Therefore for any integer $j\geq 1$ we have $(\alpha/j)^{j/2} \leq e^{\alpha/(2e)}$.
Set $m_0 = 3 \lc \alpha/e \rc \in [\alpha, 3\alpha]$.
Then,
\begin{equation}
 \sum_{2\leq j\leq 2m_0 } u_\alpha(j) \leq 2m_0 e^{\alpha/(2e)} \leq 6\alpha e^{\alpha} \leq e^{7\alpha}. 
\end{equation}
Since $j\in [\alpha/e, \infty) \mapsto (\alpha/j)^{j/2}$ is decreasing, starting as in \eqref{eq: diagrammes fins passage impairs à pair dans somme sous-géométrique} and using that $m^m \geq m!$ we get the following bound for the tail of the sum:
\begin{equation}
   \sum_{j > 2m_0} u_\alpha(j) \leq 2\sum_{m\geq m_0} \pg \frac{\alpha/2}{m} \pd^m \leq \sum_{m\geq m_0} \pg \frac{\alpha}{m} \pd^m \leq \sum_{m\geq m_0} \frac{\alpha^m}{m!} \leq e^\alpha. 
\end{equation}
We conclude that
\begin{equation}
    S(\alpha) =
    \sum_{2\leq j \leq 2m_0} u_\alpha(j) + \sum_{j > 2m_0} u_\alpha(j)
\leq
    e^{7\alpha} + e^{\alpha} \leq e^{8\alpha} -1.
\qedhere
\end{equation}
\end{proof}

\begin{proposition}
\label{prop: bound on sums of biased dimensions for thin diagrams from level 2 for inner diagrams}
There exists a universal constant $C>0$ such that the following holds. Let $n$ and $k$ be integers such that $2\leq k\leq n-1$. Set $f = n-k$. Let $\lambda\vdash n$ such that $\lambda_1 \geq k$ and $r \leq f/6$, where $r = n-\lambda_1$. Let $\sigma \in \kS_n$ with support size $k$.
Then,
\begin{equation}
    \bg Z(\lambda, \sigma)\bd \leq  \pg \frac{f}{n}\pd^r
    \pg e^{C r^2 k / f^2} - 1 \pd,
\end{equation}
where $Z(\lambda, \sigma)$ was defined in \eqref{eq: def W Z}.
\end{proposition}

\begin{proof}
First, for each $j\geq 2$ there are at most $p(j)$ Young diagrams $\mu\subset_{\vdash k} \lambda$ with $j$ boxes over the first row, and therefore by Lemma \ref{lem: bound on biased skew dimensions for each j low level} we have
\begin{equation}
    \sum_{\mu \subset_{\vdash k} \lambda \du |\mu_{\geq 2}| = j } B_{\theta_k}(\lambda, \mu) \leq p(j) \max_{\mu \subset_{\vdash k} \lambda \du |\mu_{\geq 2}| = j } B_{\theta_k}(\lambda, \mu) \leq \pg C_0 \frac{kr^2}{jf^2}\pd^{j/2}\pg \frac{f}{n}\pd^r,
\end{equation}
where $C_0$ is the universal constant from Lemma \ref{lem: bound on biased skew dimensions for each j low level}.
We deduce from Lemma \ref{lem: bornes technique en amont pour les sommes de B theta lambda mu à j fixé pour les diagrammes fins}, applied with $\alpha = C r^2 k / f^2$, and setting $C = 8C_0$, that
\begin{equation}
    \bg Z(\lambda, \sigma)\bd \leq \pg \frac{f}{n}\pd^r
    \pg e^{C r^2 k / f^2} - 1 \pd,
\end{equation}
which concludes the proof.
\end{proof}

\subsection{Combined bound}

\begin{proposition}\label{prop: character bound low level with f over n}
    There exists a universal constant $C>0$ such that the following holds. Let $n$ and $k$ be integers such that $2\leq k\leq n-1$. Set $f = n-k$. Let $\lambda\vdash n$ such that $\lambda_1 \geq k$ and $r \leq f/6$, where $r = n-\lambda_1$. Let $\sigma \in \kS_n$ with support size $k$.
Then,
\begin{equation}
    \bg \chi^\lambda(\sigma) \bd \leq  \pg \frac{f}{n}\pd^r
    e^{C r^2 k / f^2}.
\end{equation}
\end{proposition}
\begin{proof}
    Recall that $\chi^\lambda(\sigma) = W(\lambda, \sigma) + Z(\lambda,\sigma) = W_\lambda(k) + Z(\lambda,\sigma)$ by definition. By the triangle inequality, and since $W_\lambda(k)\geq 0$ by Lemma \ref{lem: W est positif}, we get
    \begin{equation}
        \bg \chi^\lambda(\sigma)\bd \leq |W_\lambda(k)| + |Z(\lambda,\sigma)| = W_\lambda(k) + |Z(\lambda,\sigma)|.
    \end{equation}
Applying Lemma \ref{lem: upper bound on W} to $W_\lambda(k)$ and Proposition \ref{prop: bound on sums of biased dimensions for thin diagrams from level 2 for inner diagrams} to  $|Z(\lambda,\sigma)|$ then concludes the proof.
\end{proof}

\subsection{Character bound written in exponential form}
We now combine the previous results to prove character bounds for low-level representations. We set $\zeta(r) = \ln \max(1,r)$ for $r\geq 0$.
\begin{proposition}\label{prop: borne caractères basses fréquences avec terme qui aide et suffisamment de points fixes}

There exist universal constants $n_0\geq 3$, $0<c_0<1$ such that the following holds. Let $n$ and $k$ be integers such that $n\geq n_0$ and $2\leq k \leq n-1$, $\sigma \in \kS_n$ with support $k$, and $\lambda \vdash n$ such that $r:= n-\lambda_1 \leq c_0 \frac{f^2}{n}$, where $f =n-k$. Then 
    \begin{equation}
        d_\lambda \bg \chi^{\lambda}(\sigma) \bd^{\frac{\ln n}{\ln(n/f)}} \leq d_{\lambda}^{- \frac{\zeta(r)}{\ln n}} \quad \quad \text{ or equivalently } \quad \quad \bg \chi^{\lambda}(\sigma) \bd \leq d_\lambda^{-\frac{\ln(n/f)}{\ln n} \pg 1 + \frac{\zeta(r)}{\ln n} \pd}.
    \end{equation}
\end{proposition}
\begin{proof} 
    If $r=0$ then $d_\lambda = 1 = \chi^{\lambda}(\sigma)$. If $r=1$ then $d_{\lambda} = n-1\leq n$ and, $\ch^{\lambda}(\sigma) = f-1$ so $\bg\chi^{\lambda}(\sigma)\bd = \frac{f-1}{n-1} \leq \frac{f}{n}$, and we have
    \begin{equation}
        d_\lambda \bg \chi^{\lambda}(\sigma) \bd^{\frac{\ln n}{\ln(n/f)}} \leq n (f/n)^{\frac{\ln n}{\ln(n/f)}} = 1.
    \end{equation}
We may therefore assume that $r\geq 2$. Then we have $r! \geq r^{r/2}$. For the rest of the proof, we set
\begin{equation}
    t = \frac{\ln n}{\ln(n/f)}.
\end{equation}
By Proposition \ref{prop: character bound low level with f over n}, we have 
\begin{equation}
    \bg\chi^\lambda(\sigma)\bd \leq (f/n)^r
    e^{C r^2 k / f^2},
\end{equation}
where $C$ is a universal constant. Therefore, since $d_\lambda \leq n^r/\sqrt{r!} \leq n^r /r^{r/4}$, we get
\begin{equation}
    d_\lambda \bg \chi^{\lambda}(\sigma) \bd^{t} \leq \frac{n^r}{r^{r/4}} (f/n)^t \exp\pg C\frac{r^2k}{f^2} t \pd  = \exp\pg C\frac{r^2k}{f^2} t - \frac{1}{4}r\ln r\pd.
\end{equation}
Let $c_0 = \frac{1}{96C}$. In particular $c_0 \leq \frac{\ln 2}{32C}$.
We may assume that $f\geq \sqrt{n}$ since for $f\leq \sqrt{n}$ (and $c_0<1$) the condition $r\leq c_0f^2/n$ is trivial. Let us bound $C\frac{r^2k}{f^2} t$ for different values of $f$ and $r$.

  \medskip
  
\fbox{First case: $\sqrt{n} \leq f < n^{3/4}$}

\medskip

\noindent Here we have $t \leq 4$, so
    \begin{equation}
        C\frac{r^2k}{f^2} t \leq 4C\frac{r^2n}{f^2} \leq 4Cc_0 r \leq \frac{1}{8} r \ln 2 \leq \frac{1}{8} r \ln r.
    \end{equation}

    \fbox{Second case: $f \geq n^{3/4}$ and $r\geq n^{1/12}$}

\medskip

\noindent We have $t = - (\ln n)/\ln(1 - k/n) \le \tfrac kn \ln n$. Therefore 
    \begin{equation}
       C\frac{r^2 k}{f^2} t \leq  C\frac{r^2 n}{f^2} \ln n \leq Cc_0 r \ln n \leq 12 C c_0 r\ln r \leq \frac{1}{8} r\ln r.
    \end{equation}

\fbox{Third case: $f \geq n^{3/4}$ and $r< n^{1/12}$}

\medskip

\noindent
        For $n$ large enough we have $C \ln n \leq n^{1/12}$ and $e^{n^{-1/4}} \leq 2^{1/8}$, so by the same argument as before,
\begin{equation}
    C\frac{r^2 k}{f^2} t \leq  C\frac{r^2 n}{f^2} \ln n \leq n^{1/12} \frac{\pg n^{1/12}\pd^2 n}{\pg n^{3/4}\pd^2} = n^{-1/4} \leq \frac{1}{8} 2 \ln 2 \leq \frac{1}{8} r\ln r.
\end{equation}        

\fbox{Conclusion}

\medskip

\noindent
We have proved that in all cases we have $C\frac{r^2 k}{f^2} t  - \frac{1}{4} r\ln r\leq -\frac{1}{8} r\ln r$ (for $n$ large enough). We deduce that for $2\leq r \leq c_0 f^2/n$, using that  $d_\lambda \leq n^r = r^{r\frac{\ln n}{\ln r}}$, that is, $d_{\lambda}^{-\frac{\ln r}{\ln n}} \geq r^{-r}$, we have
    \begin{equation}
        d_\lambda \bg \chi^{\lambda}(\sigma) \bd^{t} \leq r^{-r/8} \leq d_{\lambda}^{- \frac{\ln r}{8 \ln n}}.
    \end{equation}
Putting this together with the cases $r=0$ and $r=1$ we conclude that for any $0\leq r\leq c_0 f^2/n$,
\begin{equation}
    d_\lambda \bg \chi^{\lambda}(\sigma) \bd^{\frac{\ln n}{\ln(n/f)}} \leq d_{\lambda}^{- \frac{\ln \max(1, r)}{8 \ln n}} = d_{\lambda}^{- \frac{\zeta(r)}{\ln n}}.
\qedhere
\end{equation}
\end{proof}

\begin{remark}
    The relevance of the previous proposition is to have some additional compensation, but the precise function (here we proved the statement with $r\mapsto \tfrac{1}{8}\ln\max(1,r)$) does not matter, as long as it tends to infinity. Adapting the arguments of the proof, one could show that the same statement holds when both $n$ and $r$ diverge, with $1/2 - o(1)$ in place of $1/8$. Informally, in terms of mixing times this means that the representations $\lambda$ such that $n-\lambda_1$ is close to $n^{\alpha}$ stop contributing to the $L^2$ distance to stationarity from time about $\pg 1 - \alpha/2 \pd \frac{\ln n}{\ln(n/f)}$, which is long before the mixing time. 
\end{remark}

\section{Bootstrapped and unified character bounds}\label{s: incrementation}

Different techniques can be tailored to bound characters depending on the diagram shape we want to study. For example, Féray and Śniady \cite{FeraySniady2011} developed a technique that gives better bounds on characters $\ch^\lambda(\sigma)$ if the outer hook $s(\lambda)$ and the support of $\sigma\in \kS_n$ are small. On the other hand, the bounds of Larsen and Shalev \cite{LarsenShalev2008} are precise for permutations with few fixed points and all diagrams $\lambda$, and are especially good for diagrams with a long first row and cycle structures close to involving a single cycle length (i.e.\ a cycle type close to $[n/m]^m$ for some fixed integer $m$).

So far, we proved bounds for permutations with $\omega(\sqrt{n})$ fixed points and diagrams $\lambda$ with a long first row, as well as permutations whose support and number of fixed points are of order $n$ and all diagrams $\lambda$. 
Our bounds for diagrams with a very long first row are already precise, but the arguments from the previous sections do not give uniform bounds for characters that have a small support. However, as we see in this section, probabilistic arguments allow the deduction of character bounds for permutations with a small support from those for larger support. 

The idea is the following: the distribution of a Markov chain with transition matrix $P$ at time $t$ (which is a multiple of $m$) is the same as that of the Markov chain with transition matrix $P^m$ at time $t/m$. For example if $\mu_t$ denotes the distribution of the random transposition walk after $t$ steps and $m= \lf \varepsilon n/2\rf$, then $\mu_m$ is conjugacy invariant and concentrated on permutations whose support is close to $\varepsilon n$, so $\mu_t = \mu_m^{*t/m}$ is the distribution of a product of $t/m$ random permutations with support close to $\varepsilon n$. This simple yet powerful idea dates back at least to \cite{KalbfleischLawless1985} and more complex versions were developed in comparison techniques for Markov chains \cite{DiaconisSaloff-Coste1993comparisongroups, DiaconisSaloff-Coste1993comparisonreversiblechains}.

In the context of conjugacy invariant walks on groups, it was an important tool at least in \cite{MullerSchlagePuchta2007precutoff}, in \cite{BerestyckiSengul2019} where it allowed studying the problem on a scale on which curvature behaves well, and in \cite{LarsenTiep2024FiniteClassicalGroups} where it recently allowed Larsen and Tiep to bootstrap character bounds for group elements with large \textit{support} (for a suitable generalization of the notion of support to groups other than the symmetric group) into uniform character bounds. Our approach is therefore similar to that of \cite{MullerSchlagePuchta2007precutoff} and \cite{LarsenTiep2024FiniteClassicalGroups}. However, in order to prove character bounds with error terms precise enough for cutoff, we apply bootstrapping only to high enough frequencies. 

\subsection{Characters and powers of eigenvalues}
Recall that if $\cC \in \Conj^*(\kS_n)$, that is, if $\cC$ is a non-trivial conjugacy class of $\kS_n$, the distribution of the simple random walk on $\Cay(\kS_n, \cC)$ started at $\Id$ and after $t$ steps is $\mu_t := \Unif_{\cC}^{*t}$, the $t$-fold convolution product of $\Unif_{\cC}$. 

More explicitly, let $t\geq 1$, $X_t \sim \mu_t$, $X_1 \sim \mu_1 = \Unif_{\cC}$, and denote by $P$ the transition matrix of the random walk on $\kS_n$ driven by $\mu_1$. Then the transition matrix of the random walk on $\kS_n$ driven by $\mu_t$ is nothing else than $P^t$. In terms of eigenvalues, this means that $\bbE\cg \chi^{\lambda}(X_1) \cd^t = \bbE\cg \chi^{\lambda}(X_t) \cd$. Therefore, we have
\begin{equation}\label{eq: identity characters eigenvalues}
    \chi^\lambda(\cC)^t = \pg \sum_{\sigma \in \kS_n} \mu_1(\sigma)\pd = \bbE \cg \chi^\lambda(X_1) \cd^t = \bbE \cg \chi^\lambda(X_t) \cd =  \sum_{\cC' \in \Conj(\mathfrak{S}_n)} \mu_t(\cC') \chi^\lambda(\cC').
\end{equation}
A similar identity appeared for instance in the definition of $g_1$
in \cite{MullerSchlagePuchta2007precutoff}, below Lemma 10 on Page 934.

\begin{lemma}\label{lem: borne générale caractères et valeurs propres}
    Let $n\geq 1$, $\cC \in \Conj^*(\mathfrak{S}_n)$, $\lambda \vdash n$, and $t\geq 0$.
    Set $\mu_t = \Unif_{\cC}^{*t}$. Let $B \subset \mathfrak{S}_n$, and denote its complement by $B^c$. Then we have 
\begin{equation}
    \bg \chi^\lambda(\cC)^t\bd \leq \mu_t(B^c) + \max_{\sigma\in B} \bg  \chi^\lambda(\sigma)\bd.
\end{equation}
\end{lemma}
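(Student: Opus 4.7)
The plan is to derive the identity via Schur's lemma applied to a central element of the group algebra, then deduce the inequality by a routine triangle-inequality argument split over $B$ and $B^c$.

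First, I would note that the element
\begin{equation*}
E_\cC := \frac{1}{|\cC|} \sum_{\sigma \in \cC} \sigma \;\in\; \bbC[\kS_n]
\end{equation*}
is central in the group algebra, as $\cC$ is stable under conjugation. Consequently, for any irreducible representation $\rho^\lambda$ of $\kS_n$, Schur's lemma gives $\rho^\lambda(E_\cC) = c_\lambda \cdot \Id$ for some scalar $c_\lambda$, and taking traces identifies $c_\lambda = \ch^\lambda(\cC)/d_\lambda = \chi^\lambda(\cC)$. Convolution in probabilities on $\kS_n$ is multiplication in $\bbC[\kS_n]$, so the element of $\bbC[\kS_n]$ associated with $\mu_t = \Unif_\cC^{*t}$ is precisely $E_\cC^t$, which by the previous observation satisfies $\rho^\lambda(E_\cC^t) = \chi^\lambda(\cC)^t \Id$. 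Taking normalized traces on both sides yields
\begin{equation*}
\chi^\lambda(\cC)^t \;=\; \frac{1}{d_\lambda}\Tr \rho^\lambda(E_\cC^t) \;=\; \sum_{\sigma \in \kS_n} \mu_t(\sigma)\, \chi^\lambda(\sigma),
\end{equation*}
and regrouping the sum over conjugacy classes (using that $\chi^\lambda$ and $\mu_t$ are both class functions) produces the stated identity.

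For the second part, I would simply split
\begin{equation*}
\bg \chi^\lambda(\cC)^t \bd \;\leq\; \sum_{\sigma \in B} \mu_t(\sigma)\bg \chi^\lambda(\sigma)\bd \;+\; \sum_{\sigma \in B^c} \mu_t(\sigma)\bg \chi^\lambda(\sigma)\bd,
\end{equation*}
and bound each piece separately: on $B$ by replacing $|\chi^\lambda(\sigma)|$ with its maximum over $B$ and using $\sum_{\sigma\in B}\mu_t(\sigma)\le 1$, and on $B^c$ by the trivial bound $|\chi^\lambda(\sigma)|\le 1$ (which holds for any reduced character, since $|\ch^\lambda(\sigma)|\le d_\lambda$). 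This gives exactly the stated inequality.

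Honestly there is no real obstacle here; the only subtlety is recognizing that the multiplicativity $\chi^\lambda(\mu * \nu) = \chi^\lambda(\mu)\chi^\lambda(\nu)$ for conjugation-invariant measures $\mu,\nu$ is encoded in Schur's lemma applied to the centre of $\bbC[\kS_n]$. Once this is in place, both claims follow immediately.
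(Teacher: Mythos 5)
Your proof is correct and takes essentially the same approach as the paper: the paper's claim that "$\chi^\lambda(\cC)^t$ is the eigenvalue of $P^t$ associated to $\lambda$" is precisely the Schur's-lemma fact you spell out explicitly via the central element $E_\cC$ of the group algebra, and your triangle-inequality split for the second part matches the paper's word for word.
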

\begin{proof}
Applying the law of total probability and the triangle inequality in \eqref{eq: identity characters eigenvalues}, we get 
    \begin{equation}
    \begin{split}
        \bg  \chi^\lambda(\cC)^t \bd & = \bg \sum_{\sigma \in B^c} \mu_t(\sigma) \chi^\lambda(\sigma) +  \sum_{\sigma \in B} \mu_t(\sigma) \chi^\lambda(\sigma)\bd \\
        &\leq \mu_t(B^c)\max_{\sigma\in B^c} \bg \chi^\lambda(\sigma) \bd + \mu_t(B)\max_{\sigma\in B} \bg \chi^\lambda(\sigma)\bd.
    \end{split} 
    \end{equation}
Using that $\max_{\sigma\in B^c} \bg \chi^\lambda(\sigma) \bd \leq 1$ and $\mu_t(B)\leq 1$ gives the desired result.
\end{proof}

\subsection{Multi-coupon collector estimates}

The goal of this subsection is to control the size of the support of the product of i.i.d.\ random permutations uniform from a given conjugacy class.
To this end, we introduce the following notation. Let $n,p,q,t\geq 1$ be integers. Let $S_{n,q}(1), S_{n,q}(2), \ldots$ be a sequence of i.i.d.\ uniformly random subsets of $[n] = \ag 1, 2, \ldots, n\ad$ of size $q$. Let 
\begin{equation}
    N_{n,p,q}(t) =  [p]\cap \bigcup_{i=1}^t  S_{n,q}(i)\, ,
\end{equation}
\begin{equation}
\begin{split}
    N^{(2)}_{n,p,q}(t) & = \ag (i,j)\in [t] \times [p] \du  j \in S_{n,q}(i), \text{ and }  j \in S_{n,q}(i') \text{ for some } i'<i \ad \\
    & = \ag (i,j) \in [t] \times [p] \du j \in ( \cup_{i'=1}^{i-1} S_{n,q}(i') ) \cap S_{n,q}(i) \ad,
\end{split}
\end{equation}
and
\begin{equation}
    M^{(2)}_{n,p,q}(t) = \bg N^{(2)}_{n,p,q}(t)\bd = \sum_{i=1}^t \bg [p]\cap\left( \bigcup_{i'=1}^{i-1} S_{n,q}(i') \right)\cap S_{n,q}(i) \bd.
\end{equation}
In words, $M^{(2)}_{n,p,q}(t)$ is the number of times elements of $[p]$ that were already picked previously in the process $S_{n,q}(1), S_{n,q}(2), \ldots, S_{n,q}(t)$ are picked again afterwards, that is, the number of repetitions (possibly counting the same element of $[p]$ multiple times). In particular, one has
\begin{equation}
    M^{(2)}_{n,p,q}(t)
    = qt-\bg [p]\cap \bigcup_{i=1}^t S_{n,q}(i) \bd.
\end{equation}

If $X$ and $Y$ are two $\bbR$-valued random variables, we say that $X$ is (weakly) \textit{stochastically dominated}
by $Y$, and write $X \preceq Y$, if for every $x\in \bbR$, $\bbP(X\geq x) \leq \bbP(Y\geq x)$. We may also write stochastic domination with the laws of the random variables instead of the variables themselves. 

\begin{lemma}\label{lem: elementary bin stochastic domination}
    Let $n,p, q$ be integers such that $1\leq p\leq n$ and $1\leq q \leq n/2$. Then $[p]\cap S_{n,q}(1)\preceq \Bin(q,2p/n)$.
\end{lemma}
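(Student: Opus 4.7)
The plan is to establish this stochastic domination via an explicit pointwise coupling. Let $Z := |[p] \cap S_{n,q}(1)|$, which follows a hypergeometric law. I would realize $Z$ by drawing the $q$ elements of $S_{n,q}(1)$ sequentially and without replacement: writing $B_i$ for the indicator that the $i$-th draw lies in $[p]$, one has $Z = \sum_{i=1}^q B_i$, and conditionally on the previous draws the probability that $B_i = 1$ equals $p_i := (p - k_i)/(n-i+1)$, where $k_i := B_1 + \cdots + B_{i-1}$.

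The next step is the only place where the hypothesis is used. Since $k_i \geq 0$ and $i \leq q \leq n/2$, one gets $p_i \leq p/(n-q+1) \leq p/(n/2) = 2p/n$, which explains and forces the factor $2$ in the parameter of $X$.

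Finally I would assemble the coupling. Take $U_1, \ldots, U_q$ i.i.d. uniform on $[0,1]$, set $Y_i := \mathbf{1}\{U_i \leq 2p/n\}$ (so $\sum_i Y_i$ has the law of $X \sim \Bin(q, 2p/n)$), and recursively define $B_i := \mathbf{1}\{U_i \leq p_i\}$ with $p_i$ computed from $B_1, \ldots, B_{i-1}$ as above. By construction the joint law of $(B_1, \ldots, B_q)$ is the correct one for sequential sampling without replacement, so $\sum_i B_i$ has the same law as $Z$; and since $p_i \leq 2p/n$ we get $B_i \leq Y_i$ pointwise, hence $Z \leq X$ almost surely, which yields the stochastic domination. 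There is no real obstacle here: the argument is a textbook coupling, and the only substantive ingredient is the bound $n-q+1 \geq n/2$.
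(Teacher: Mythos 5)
Your proof is correct and follows essentially the same route as the paper: both model $S_{n,q}(1)$ by sequential sampling without replacement, bound the conditional probability of hitting $[p]$ at each draw by $p/(n-q+1) \le 2p/n$ using $q \le n/2$, and conclude by a coupling with i.i.d.\ Bernoulli$(2p/n)$ variables. Your write-up is merely a bit more explicit about the coupling construction than the paper's.
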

\begin{proof}
    By definition, $S_{n,q}(1)$ is a random subset of $[n]$ with $q$ elements. This can be viewed as labelling $n$ balls $1$ through $n$, and picking $q$ of them, one by one and without replacement. At the $i$-th draw (where $1\leq i\leq q$),  the probability of picking a ball whose label is in $[p]$ is at most $p/(n-(i-1)) \leq 2p/n$, and this bound holds independently of whether the previously picked balls were in $[p]$. 
   Therefore, $[p] \cap S_{n,q}(1)$ is stochastically dominated by a sum of $q$ independent Bernoulli variables of parameter $2p/n$, that is, by a $\Bin(q, 2p/n)$ random variable.
\end{proof}

This allows us to stochastically dominate $M_{n,n,q}^{(2)}(t)$ by a binomial law.

\begin{lemma}\label{lem: stochastic domination M 2 by bin}
    Let $n,q,t\geq 1$ be integers such that $qt\leq n/2$. Let $X\sim \Bin(qt,2qt/n)$. Then $M^{(2)}_{n,n,q}(t)\preceq X$.
\end{lemma}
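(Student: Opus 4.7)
I would proceed in three steps: first reformulate $M^{(2)}_{n,n,q}(t)$ as a telescoping sum of stepwise ``collision counts'', then dominate each summand conditionally by a fixed binomial via Lemma~\ref{lem: elementary bin stochastic domination}, and finally assemble everything with a Strassen-type sequential coupling.

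Set $U_i = \bigcup_{i'\le i} S_{n,q}(i')$ with $U_0 = \emptyset$. The identity $|U_i| = |U_{i-1}| + q - |S_{n,q}(i)\cap U_{i-1}|$ telescopes into
\begin{equation}
M^{(2)}_{n,n,q}(t) \;=\; qt - |U_t| \;=\; \sum_{i=1}^{t}\,|U_{i-1}\cap S_{n,q}(i)|,
\end{equation}
so it suffices to stochastically dominate this sum by $\Bin(qt,2qt/n)$. For each $i$, the set $U_{i-1}$ is independent of $S_{n,q}(i)$ and satisfies the deterministic bound $|U_{i-1}|\le q(i-1)\le qt\le n/2$. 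By invariance of the law of $S_{n,q}(i)$ under relabelings of $[n]$, conditionally on $U_{i-1}$ the law of $|U_{i-1}\cap S_{n,q}(i)|$ equals that of $|[\,|U_{i-1}|\,]\cap S_{n,q}(i)|$. Lemma~\ref{lem: elementary bin stochastic domination} applied with $p=|U_{i-1}|$ (the case $|U_{i-1}|=0$ being trivial) then shows that this conditional law is dominated by $\Bin(q,2|U_{i-1}|/n)$, and the uniform bound $|U_{i-1}|\le qt$ together with monotonicity of the binomial in its success probability upgrades this to domination by the fixed law $\Bin(q,2qt/n)$.

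A standard sequential coupling now produces i.i.d. variables $B_1,\ldots,B_t\sim\Bin(q,2qt/n)$, jointly with $(S_{n,q}(i))_{1\le i\le t}$, such that $|U_{i-1}\cap S_{n,q}(i)|\le B_i$ almost surely for every $i$: at step $i$, given the past (and hence $U_{i-1}$), the conditional domination established above allows one to couple $S_{n,q}(i)$ to a fresh $B_i$ of the prescribed unconditional law, independently of everything else, since the dominating distribution does not depend on the history. Summing yields $M^{(2)}_{n,n,q}(t)\le B_1+\cdots+B_t$, and the sum of $t$ independent $\Bin(q,2qt/n)$ random variables has law $\Bin(qt,2qt/n)$.

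The argument is essentially routine given Lemma~\ref{lem: elementary bin stochastic domination}; there is no real obstacle. The one point worth emphasising is the uniform replacement of the sharper conditional bound $\Bin(q,2|U_{i-1}|/n)$ by the history-free $\Bin(q,2qt/n)$: it is precisely this coarsening that makes the dominating summands i.i.d., and hence lets their total collapse into the single binomial $\Bin(qt,2qt/n)$ appearing in the statement.
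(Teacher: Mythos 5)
Your proof is correct and follows essentially the same route as the paper's: telescope $M^{(2)}_{n,n,q}(t)$ into the stepwise collision counts $\sum_{i} |U_{i-1}\cap S_{n,q}(i)|$, bound each summand by $\Bin(q,2qt/n)$ via Lemma~\ref{lem: elementary bin stochastic domination} using the deterministic bound $|U_{i-1}|\le qt$, and sum. The only difference is cosmetic: the paper replaces the random index set $U_{i-1}$ directly by the fixed set $[qt]$ before invoking the elementary lemma and then asserts the domination of the sum, whereas you make the conditional domination and the sequential coupling explicit — a slightly more careful writeup of the identical argument.
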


\begin{proof}
Consider again $n$ balls, labelled $1$ through $n$. Let $m\geq 1$ be an integer. Set $X_i = N_{n,n,q}(i)\backslash N^{(2)}_{n,n,q}(i)$ and $x_i = |X_i|$ for $1\leq i\leq t$. By definition, recalling that we use the symbol $\sqcup$ to emphasize that a union is disjoint, we have
\begin{equation}
  M^{(2)}_{n,n,q}(t) = \bg  N^{(2)}_{n,n,q}(t) \bd= \bg \bigsqcup_{i=2}^t N^{(2)}_{n,n,q}(i) \backslash N^{(2)}_{n,n,q}(i-1) \bd = \sum_{i=2}^t  \bg X_{i-1} \cap S_{n,q}(i) \bd
\end{equation}
Since for each $2\leq i\leq t$, $S_{n,q}(i)$ is independent of $(S_{n,q}(j))_{1\leq j \leq i-1}$, and $|X_i| \leq \bg N_{n,n,q}(i)\bd \leq qt$, the variables  $\bg \cg qt \cd \cap S_{n,q}(i) \bd$ for $1\leq i\leq t$ are independent, and we have
\begin{equation}
    M^{(2)}_{n,n,q}(t) \preceq \sum_{i=1}^t  \bg \cg qt \cd \cap S_{n,q}(i) \bd.
\end{equation}
Moreover, $\bg \cg qt \cd \cap S_{n,q}(i) \bd \preceq \Bin(q, 2qt/n)$ for each $1\leq i \leq t$ by Lemma \ref{lem: elementary bin stochastic domination}.
We conclude that $M^{(2)}_{n,n,q}(t)$ is stochastically dominated by a sum of $t$ independent $\Bin(q, 2qt/n)$ random variables, i.e.\ by a $\Bin(qt, 2qt/n)$ random variable.
\end{proof}

\begin{lemma}\label{lem: dom sto support bin}
    Let $n,k, t$ be integers such that $2\leq k \leq n$,  $t \geq 2$, and $kt\leq n/2$. Let $\cC$ be a conjugacy class of $\kS_n$ with support $k$. Let $Z \sim \Unif_{\cC}^{*t}$ and recall that $\supp(Z)$ denotes the support size of $Z$. Then $kt-\supp(Z) \preceq \Bin(qt, 2qt/n)$.
\end{lemma}

\begin{proof}
First, by definition, $Z\sim \sigma_1\cdots\sigma_t$, where $\sigma_1, \ldots, \sigma_t$ are i.i.d.\ uniform permutations in $\cC$.  
Moreover, the elements in the support of the product permutation $Z$ consists at least of the integers that are in the support of exactly one $\sigma_i$, whose distribution is that of $kt - M^{(2)}_{n,n,k}(t)$.
Hence $\supp(Z) \succeq kt - M^{(2)}_{n,n,k}(t)$, which is equivalent to $kt - \supp(Z) \preceq M^{(2)}_{n,n,k}(t)$.  Moreover $M^{(2)}_{n,n,k}(t) \preceq \Bin(qt, 2qt/n)$ by Lemma \ref{lem: stochastic domination M 2 by bin}. We conclude that $kt - \supp(Z) \preceq \Bin(qt, 2qt/n)$.
\end{proof}

\subsection{Concentration of support}
We start by establishing a bound on the tails of binomial random variables.
\begin{lemma}\label{lem: borne binomiale eps alpha A}
     Let $0<\varepsilon, \alpha <1/2$, and $A\geq 2$ such that $\alpha \leq \frac{\varepsilon}{e}e^{-A}$. Let $n\geq 1/\varepsilon$ be an integer and $X \sim \Bin(n, \alpha )$.  Then
    \begin{equation}
        \bbP(X \geq \varepsilon n) \leq  e^{-(\varepsilon A/2) n}.
    \end{equation}
\end{lemma}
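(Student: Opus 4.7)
The plan is to apply the standard exponential Markov (Chernoff) inequality and optimize. For any $t > 0$,
\begin{equation*}
\bbP(X \geq \varepsilon n) \leq e^{-t\varepsilon n}\, \bbE\!\left[e^{tX}\right] = e^{-t\varepsilon n}\bigl(1 - \alpha + \alpha e^t\bigr)^n \leq \exp\!\bigl(-t\varepsilon n + n\alpha(e^t - 1)\bigr),
\end{equation*}
using $1+x \leq e^x$ in the last step. The natural optimizing choice is $t = \ln(\varepsilon/\alpha)$, which is positive since the hypothesis $\alpha \leq \tfrac{\varepsilon}{e}e^{-A}$ with $A \geq 2$ forces $\alpha < \varepsilon$.

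Substituting this value of $t$ and bounding $n(\varepsilon - \alpha) \leq \varepsilon n$,
\begin{equation*}
\bbP(X \geq \varepsilon n) \leq \exp\!\bigl(\varepsilon n \ln(\alpha/\varepsilon) + \varepsilon n\bigr) = \Bigl(\frac{e\alpha}{\varepsilon}\Bigr)^{\varepsilon n}.
\end{equation*}
The hypothesis $\alpha \leq \tfrac{\varepsilon}{e} e^{-A}$ rewrites as $e\alpha/\varepsilon \leq e^{-A}$, so
\begin{equation*}
\bbP(X \geq \varepsilon n) \leq e^{-A\varepsilon n} \leq e^{-(A\varepsilon/2)n},
\end{equation*}
which is the claimed bound (in fact with a factor-$2$ slack in the exponent, presumably kept to allow soft applications downstream).

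I do not expect a real obstacle: this is a textbook Chernoff estimate, and the hypotheses $A \geq 2$, $\varepsilon,\alpha < 1/2$, $n \geq 1/\varepsilon$ play essentially no role in the computation itself. Their purpose seems to be to make the conclusion usable in the subsequent coupon-collector argument (ensuring the threshold $\varepsilon n$ is at least $1$, and leaving exponential decay strong enough to survive union bounds). One minor stylistic point is whether to replace $\varepsilon n$ by $\lceil \varepsilon n \rceil$ throughout; this has no effect on the bound since $\{X \geq \varepsilon n\} = \{X \geq \lceil \varepsilon n \rceil\}$ for integer-valued $X$, and the proof above is insensitive to this.
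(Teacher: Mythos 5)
Your proof is correct. The computation is a clean Chernoff argument: Markov on $e^{tX}$, the bound $1+x\le e^x$ applied to the moment generating function, the optimizing choice $t=\ln(\varepsilon/\alpha)$, and then the hypothesis $\alpha\le\tfrac{\varepsilon}{e}e^{-A}$ to evaluate $(e\alpha/\varepsilon)^{\varepsilon n}\le e^{-A\varepsilon n}$. This is a genuinely different route from the paper's: the paper bounds $\bbP(X\ge\varepsilon n)=\sum_{j\ge\lceil\varepsilon n\rceil}\binom{n}{j}\alpha^j(1-\alpha)^{n-j}$ term by term, using $\binom{n}{j}\le(en/j)^j$ to show each summand is at most $e^{-jA}$, then sums a geometric series (which is where $A\ge2$, i.e.\ $e^{-A}\le 1/2$, enters) and finally absorbs the constant $2$ into the exponent using $n\ge 1/\varepsilon$. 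What the Chernoff route buys you is that it is both sharper (you get the exponent $A\varepsilon n$ rather than $A\varepsilon n/2$) and strictly less hypothesis-hungry: your argument uses neither $A\ge 2$, nor $\varepsilon,\alpha<1/2$, nor $n\ge 1/\varepsilon$, except trivially to weaken $e^{-A\varepsilon n}$ to $e^{-A\varepsilon n/2}$ at the end; whereas the paper's version needs $A\ge 2$ and $n\ge 1/\varepsilon$ to control the geometric tail. Your closing observation that those hypotheses serve the downstream application more than this lemma is exactly right — in the paper's proof they are used, but only to clean up a factor that your approach never produces.
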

\begin{proof}
    For each $j\geq \lc \varepsilon n \rc$ we have $\binom{n}{j}\alpha ^j \leq (en/j)^j \alpha ^j \leq (e\alpha /\varepsilon)^j \leq e^{-jA}$.
It follows that
\begin{equation}
        \bbP(X \geq \varepsilon n)
        = \sum_{j\geq \lc \varepsilon n \rc} \binom{n}{j}\alpha ^j(1-\alpha )^{n-j} \leq \sum_{j\geq \lc \varepsilon n \rc} \binom{n}{j}\alpha ^j \leq \sum_{j\geq \lc \varepsilon n \rc}e^{-jA} = \frac{e^{-\lc \varepsilon n \rc A}}{1-e^{-A}}, 
    \end{equation}
and we conclude, since $e^{-A} \leq 1/2$ and $e^{-\varepsilon (A/2) n} \leq e^{-\varepsilon n} \leq e^{-1} \leq 1/2 $, that
\begin{equation}
   \bbP(X \geq \varepsilon n) \leq 2e^{-\lc \varepsilon n \rc A} \leq  2e^{-\varepsilon A n} \leq e^{-(\varepsilon A/2) n}. \qedhere
\end{equation}
\end{proof}
We now combine this with the multi-coupon collector arguments to show that the support of a product of $t$ invariant permutations in $\kS_n$ with support $k$ is close to $kt$, as long as $kt$ is small. The scales we consider are the same as for the curvature argument of \cite{BerestyckiSengul2019}. We emphasize that here $t$ is of order order $n/k$, which is not the same as the mixing time, which is close to $\frac{n}{k}\ln n$.

\begin{lemma}\label{lem: concentration du support}
   Let $0<\varepsilon, \alpha <1/2$, and $A\geq 2$ such that $\alpha \leq \frac{\varepsilon}{e}e^{-A}$. Let $n,k, t$ be integers such that $2\leq k \leq n$, $t \geq 2$, and $kt\leq \alpha n/2$. Let $\cC$ be a conjugacy class of $\kS_n$ with support $k$. Let $Z \sim \Unif_{\cC}^{*t}$. Set $y = \varepsilon k t$.
   Then
   \begin{equation}
       \bbP(\supp(Z) \leq kt-y) = 1- \bbP(kt - y < \supp(Z) \leq kt) \leq e^{-(\varepsilon A/2) kt}.
   \end{equation}
   \end{lemma}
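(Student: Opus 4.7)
The plan is a short three-step reduction to the previous two lemmas.

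First, I would establish the equality. Since $Z = \sigma_1 \cdots \sigma_t$ where each $\sigma_i \in \cC$ has support size $k$, the support of $Z$ is contained in $\bigcup_{i=1}^t \supp(\sigma_i)$, which has cardinality at most $kt$. Hence $\supp(Z) \leq kt$ almost surely, and the events $\{\supp(Z) \leq kt - y\}$ and $\{kt - y < \supp(Z) \leq kt\}$ partition the full probability space, giving the claimed equality.

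Second, I would handle the upper bound by stochastic domination. By Lemma \ref{lem: dom sto support bin} (applied with $q = k$), the nonnegative integer-valued random variable $kt - \supp(Z)$ is stochastically dominated by a $\Bin(kt, 2kt/n)$ variable. The hypothesis $kt \leq \alpha n/2$ yields $2kt/n \leq \alpha$, so by monotonicity of the binomial in its success parameter, $\Bin(kt, 2kt/n)$ is itself stochastically dominated by $X \sim \Bin(kt, \alpha)$. Writing $N = kt$ and $y = \varepsilon N$, this gives
\begin{equation}
    \bbP(\supp(Z) \leq kt - y) = \bbP(kt - \supp(Z) \geq y) \leq \bbP(X \geq \varepsilon N).
\end{equation}

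Third, I would invoke Lemma \ref{lem: borne binomiale eps alpha A} with the same constants $\varepsilon, \alpha, A$ and with $n$ replaced by $N = kt$. All hypotheses of that lemma transfer directly: $0 < \varepsilon, \alpha < 1/2$, $A \geq 2$, and $\alpha \leq (\varepsilon/e) e^{-A}$ are assumed in the current statement. This yields $\bbP(X \geq \varepsilon N) \leq e^{-(\varepsilon A/2) N} = e^{-(\varepsilon A/2)kt}$, which is exactly the desired bound.

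The only subtlety is verifying the side condition $N \geq 1/\varepsilon$ required by Lemma \ref{lem: borne binomiale eps alpha A}. If $\varepsilon kt \geq 1$, there is nothing to check. If $\varepsilon kt < 1$ then $y < 1$, and since $kt - \supp(Z)$ is an integer, the event $\{kt - \supp(Z) \geq y\}$ coincides with $\{kt - \supp(Z) \geq 1\}$; a direct Markov bound using $\bbE[X] = kt\alpha \leq kt \cdot (\varepsilon/e)e^{-A}$ then suffices, and comparison with $e^{-(\varepsilon A/2)kt}$ follows from the inequality $xe^{-A} \leq e^{-Ax/2}$ valid for $0 \leq x \leq 1$ and $A \geq 2$. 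I expect this edge-case bookkeeping to be the only mildly nontrivial point; the rest is a clean concatenation of Lemmas \ref{lem: dom sto support bin} and \ref{lem: borne binomiale eps alpha A}.
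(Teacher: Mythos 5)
Your proof is correct and follows essentially the same route as the paper's: reduce the equality to the deterministic bound $\supp(Z) \le kt$, then chain the stochastic dominations $kt - \supp(Z) \preceq \Bin(kt, 2kt/n) \preceq \Bin(kt,\alpha)$ via Lemma \ref{lem: dom sto support bin} and the hypothesis $2kt/n \le \alpha$, and finally apply Lemma \ref{lem: borne binomiale eps alpha A} with $kt$ in the role of $n$. The one place you go beyond the paper is your attention to the side condition $kt \ge 1/\varepsilon$ required by Lemma \ref{lem: borne binomiale eps alpha A}: the paper's proof invokes that lemma without verifying this hypothesis (it is only guaranteed in the later application where $\varepsilon = 1/100$ and $kt \asymp n$, not from the hypotheses of the present lemma as stated), and your edge-case patch for $\varepsilon kt < 1$ via Markov's inequality together with $x e^{-A} \le e^{-Ax/2}$ for $0 \le x \le 1$, $A \ge 2$ is correct and closes that gap cleanly. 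You also implicitly correct a small typo in the paper's proof, which misstates the conclusion of Lemma \ref{lem: dom sto support bin} as a domination of $\supp(Z)$ rather than of $kt - \supp(Z)$.
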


\begin{proof}
We always have $\supp(Z) \le k t$ so the equality holds.
By Lemma \ref{lem: dom sto support bin} and since $2kt/n\leq \alpha$ by assumption, we have
\begin{equation}
    \supp(Z) \preceq \Bin(kt,2kt/n) \preceq \Bin(kt, \alpha).
\end{equation}
Combining this with Lemma \ref{lem: borne binomiale eps alpha A}, we finally get
\begin{equation}
    \bbP(kt-\supp(Z) \geq y) \leq \bbP(X \geq y) \leq e^{-(\varepsilon A/2) kt}.
\qedhere
\end{equation}
\end{proof}

\subsection{Bootstrapped bounds for high-level representations}

The following proposition extends the support condition $n\lesssim k \leq (1-\delta)n$ from Proposition \ref{prop: character bound for high level representations and macroscopic support sizes}, to all support sizes $2\leq k \leq (1-\delta)n$.

\begin{proposition}\label{prop: borne caractères incrémentée diagrammes intermédiaires}
Let $\delta>0$, $\rho>0$, and $\varepsilon_0>0$. There exists $n_0\geq 1$ such that the following holds. Let $n\geq n_0$, $k$ be an integer such that $2 \leq k \leq (1-\delta)n$, $\cC$ be a conjugacy class of $\kS_n$ with support $k$, and $\lambda \vdash n$ satisfying $\lambda_1'\leq \lambda_1 \leq (1-\rho)n$.
Then, setting $f:=n-k$,
   \begin{equation}
    d_\lambda \bg \chi^\lambda(\cC) \bd^{\pg\frac{1}{2-\varepsilon_0}\pd\frac{\ln n}{\ln(n/f)}} \leq 1, \quad \text{ or equivalently } \quad \bg \chi^\lambda(\cC) \bd \leq d_{\lambda}^{-(2-\varepsilon_0)\frac{\ln(n/f)}{\ln n}}.
\end{equation}
\end{proposition}

\begin{proof}
    Without loss of generality we may assume that $\varepsilon_0\leq 1/1000$, and that $1/\varepsilon_0$ is an integer. Set $\varepsilon = \varepsilon_0/12$, and let $q = 1/\varepsilon$, which is an integer. If $n/q^2 \leq k \leq (1-\delta)n$, then by Proposition \ref{prop: character bound for high level representations and macroscopic support sizes}, for $n$ large enough we have
\begin{equation}\label{eq: borne caractères incrémentée diagrammes intermédiaires equation intermédiaire}
    d_\lambda \bg \chi^\lambda(\cC) \bd^{\pg\frac{1}{2-\varepsilon}\pd\frac{\ln n}{\ln(n/f)}} \leq 1,
\end{equation}
and in particular this holds for $n/q^2 \leq k \leq (q+1)n/q^2$.

Assume from now on that $k \leq n/q^2$, and set $t = \lc \frac{n}{kq} \rc$. Since $\frac{n}{kq^2} \geq 1$, we have $\frac{1}{q}\frac{n}{k} \leq t \leq \frac{q+1}{q^2} \frac{n}{k}$. Consider the event $B = \ag \sigma \in \kS_n \du kt(1-1/q)  \leq \supp(\sigma) \leq kt \ad$, and let $\sigma \in B$. Then we have 
    \begin{equation}
        \frac{f_1(\sigma)}{n}  = 1- \frac{\supp(\sigma)}{n} \leq 1- \pg 1 - \frac{1}{q} \pd \frac{kt}{n} \leq 1- \pg 1 - \frac{1}{q} \pd\frac{1}{q} \leq e^{-1/q + 1/q^2}.
    \end{equation}
Moreover, 
\begin{equation}
    1-\frac{kt}{n} \geq 1 - \frac{q+1}{q^2} = 1 - \frac{1}{q} - \frac{1}{q^2} \geq e^{-1/q - 2/q^2},
\end{equation}
where we used that $q$ is large for the last inequality. It follows that
\begin{equation}
    \frac{\ln(n/f_1(\sigma))}{\ln(n/(n-kt))} \geq \frac{\ln ( e^{1/q -1/q^2} )}{\ln ( e^{1/q +2/q^2} )} = \frac{1-1/q}{1+2/q} \geq 1-\frac{4}{q} = 1- 4\varepsilon.
\end{equation}
Recall that $kt \in [n/q, (q+1) n/q^2]$. Then applying \eqref{eq: borne caractères incrémentée diagrammes intermédiaires equation intermédiaire} we obtain that for any $\sigma \in B$ we have, setting $z =1-kt/n$,
\begin{equation}
\label{eq:première équation}
    \bg \chi^\lambda(\sigma) \bd \leq d_{\lambda}^{-(2-\varepsilon) \frac{\ln(n/f_1(\sigma))}{\ln n}} \leq d_{\lambda}^{- \pg \frac{2-10\varepsilon}{1-4\varepsilon} \pd \frac{\ln(n/f_1(\sigma))}{\ln n}} \leq d_{\lambda}^{- \pg 2-10\varepsilon\pd \frac{\ln(1/z)}{\ln n}}
\end{equation}
Applying Lemma \ref{lem: concentration du support} with $A=20q$ and $\alpha=\varepsilon e^{-1-A}$, and recalling that $kt/q = \varepsilon kt$, we then get
\begin{equation}
\label{eq:deuxième équation}
    \Unif_{\cC}^{*t}(B^c) \leq e^{-(\varepsilon A/2) kt} = e^{-10kt} \leq e^{-5n\ln(1/z)}. 
\end{equation}
Rewriting $e^{5n}$ as $(n^{n/2})^{10/\ln n}$, we obtain $\Unif_{\cC}^{*t}(B^c) \leq (n^{n/2})^{-10 \frac{\ln(1/z)}{\ln n}} \leq d_\lambda^{-10 \frac{\ln(1/z)}{\ln n}}$.
Moreover, we have $\tfrac{\ln(1/z)}{\ln n}\asymp \tfrac{1}{\ln n}$, and $d_\lambda = e^{\Omega_+(n)} = n^{\omega_+(1)}$ by assumption, so $d_\lambda^{\frac{\ln(1/z)}{\ln n}} = o(1)$. It follows from Lemma \ref{lem: borne générale caractères et valeurs propres} that for $n$ large enough
\begin{equation}
    |\chi^\lambda(\cC)|^t \leq d_{\lambda}^{- 10 \frac{\ln(1/z)}{\ln n}} + d_{\lambda}^{- (2-10\varepsilon) \frac{\ln(1/z)}{\ln n}} \leq d_{\lambda}^{- (2-12\varepsilon) \frac{\ln(1/z)}{\ln n}} = d_{\lambda}^{- (2-\varepsilon_0) \frac{\ln(1/z)}{\ln n}}, 
\end{equation}
and therefore, since $-\ln(1/z) = \ln(1-kt/n) \leq t \ln(1-k/n) = t\ln(f/n) = -t\ln(n/f)$ (using that $t\geq 1$), we conclude that
\begin{equation}
    |\chi^\lambda(\cC)| \leq d_{\lambda}^{- \frac{1}{t} (2-\varepsilon_0) \frac{\ln(1/z)}{\ln n}} \leq d_{\lambda}^{-(2-\varepsilon_0) \frac{\ln(n/f)}{\ln n}}. \qedhere
\end{equation}
\end{proof}
\subsection{Unified character bounds}\label{s: unified character bounds}
We now have all the tools to prove our main character bounds. In this section we prove a stronger version of Theorem \ref{thm: character bound with no error nor helping term}, and then deduce Theorems~\ref{thm: uniform bound 1/2 intro} and~\ref{thm: character bound with no error nor helping term} from it.
Recall that the (symmetrized) level of a diagram $\lambda$ is defined as $\rsym(\lambda) = n - \max(\lambda_1, \lambda_1')$, and that if $\lambda_1 \geq \lambda_1'$, then $\rsym(\lambda) = n-\lambda_1$.
We recall that $\zeta(r) = \ln \max(1, r^{1/8})$ for $r\geq 0$.
\begin{theorem}\label{thm: character bound with helping term}
   Let $\delta>0$. There exists $n_0\geq 2$ such that for every $n\geq n_0$, $\lambda \in \widehat{\kS_n}$, and $\sigma \in \kS_n$ such that $f(\sigma)\geq \delta n$, we have 
    \begin{equation}
        \bg \chi^{\lambda}(\sigma) \bd \leq d_\lambda^{-\frac{\ln(n/f(\sigma))}{\ln n} \pg 1 + \frac{\zeta(\rsym(\lambda))}{\ln n} \pd}.
    \end{equation}
\end{theorem}

\begin{proof} 
Let $n$ be a large integer, $\lambda\vdash n$, and $\sigma \in \kS_n$. By symmetry (Lemma \ref{lem: identity characters sign transpose conjugation}) we may assume that $\lambda_1'\leq \lambda_1$, and set $r:= \rsym(\lambda) = n-\lambda_1$. Let $c_0$ be the constant from Proposition \ref{prop: borne caractères basses fréquences avec terme qui aide et suffisamment de points fixes}. If $r\leq c_0\delta^2n$, then the result holds by Proposition \ref{prop: borne caractères basses fréquences avec terme qui aide et suffisamment de points fixes}.
Assume on the other hand that $r> c_0\delta^2 n$. Set $\rho = c_0\delta^2$ and $\varepsilon_0 = 7/8$. Then since $1 + \frac{\zeta(r)}{\ln n} \leq 1+ 1/8 = 2 -\varepsilon_0$, the result holds by Proposition \ref{prop: borne caractères incrémentée diagrammes intermédiaires}. This concludes the proof.
\end{proof}

\begin{proof}[Proof of Theorem \ref{thm: character bound with no error nor helping term}]
 Theorem \ref{thm: character bound with no error nor helping term} follows from Theorem \ref{thm: character bound with helping term}, since $\zeta(r) \geq 0$ for any $r\geq 0$.   
\end{proof}

\begin{proof}[Proof of Theorem \ref{thm: uniform bound 1/2 intro}]
Recall that if $\sigma$ is a permutation $f_1(\sigma)$ denotes its number of fixed points and $f(\sigma) = \max(f_1(\sigma), 1)$.
By \cite[Example 1.4 (b) and (c)]{TeyssierThévenin2025virtualdegreeswitten}, for $\sigma \in \kS_n$ we have
\begin{equation}
    E(\sigma) - 1 \leq -\frac{1}{2}\frac{\ln(n/f(\sigma))}{\ln n}.
\end{equation}
It follows from Lemma \ref{lem:ALS improved character bound} that for all $n\geq 2$, $\sigma \in \kS_n$, and $\lambda\in \widehat{\kS_n}$, we have
\begin{equation}\label{eq: bound ALS inter for uniform bound with constant 1/2}
    \bg \chi^\lambda(\sigma)\bd
    \leq d_\lambda^{E(\sigma) - 1 + \frac{\Cvirt}{\ln n}} \leq d_\lambda^{-\frac{1}{2}\frac{\ln(n/f(\sigma))}{\ln n} + \frac{\Cvirt}{\ln n}} = d_\lambda^{-\frac{1}{2}\frac{\ln(n/f(\sigma))}{\ln n} \pg 1-\frac{2\Cvirt}{\ln(n/f(\sigma))} \pd}.
\end{equation}

Let $\varepsilon>0$. Let $\delta>0$ such that $2\Cvirt/\ln(1/\delta) \leq \varepsilon$.
Assume that $n$ is large, and let $\lambda\vdash n$.
Then applying \eqref{eq: bound ALS inter for uniform bound with constant 1/2} to permutations with at most $\delta n$ fixed points and Theorem \ref{thm: character bound with no error nor helping term} to permutations with at least $\delta n$, we obtain that for any $\sigma\in\kS_n$ we have
\begin{equation}
    \bg \chi^\lambda(\sigma)\bd
    \leq d_\lambda^{ -\frac{1-\varepsilon}{2}\frac{\ln(n/f(\sigma))}{\ln n}}.
\end{equation}
Since this holds for any $\varepsilon>0$ and all $n$ large enough (depending on $\varepsilon$), this concludes the proof.
\end{proof}

\section{Cutoff for conjugacy invariant random walks}

\subsection{A classical technique}\label{s: a classical technique}

Converting character bounds into mixing time estimates dates back to Diaconis and Shahshahani \cite{DiaconisShahshahani1981}. The idea consists of first bounding the total variation distance, which we recall is half of the $L^1$ distance, by the $L^2$ distance, and then applying the Plancherel identity to rewrite the $L^2$ distance in terms of characters and dimensions of irreducible representations. This gives a correspondence between eigenvalues and characters, and between multiplicities and squares of dimensions of irreducible representations.
This way of bounding the total variation distance is often referred to as the Diaconis--Shahshahani upper bound lemma, see also \cite[Lemma 3.B.1 on page 24]{LivreDiaconis1988}. Set $\widehat{\mathfrak{S}_n}^{**} = \widehat{\mathfrak{S}_n}\backslash\ag [n], [1^n] \ad$. For conjugacy classes $\cC$ of $\kS_n$, the Diaconis--Shahshahani upper bound lemma can be adapted to the coset distance to stationarity, see for instance \cite[Section 2]{Hough2016} and Remark \ref{rem: constante racine de deux} regarding the additional constant $1/2$:
\begin{equation}\label{eq: coset Diaconis--Shahshahani upper bound lemma}
   4\dtv(\cC, t)^2 \leq \dLtwo(\cC, t)^2 = \frac{1}{2}\sum_{\lambda\in \widehat{\mathfrak{S}_n}^{**}} d_\lambda^2 \bg \chi^\lambda(\cC)\bd^{2t}.
\end{equation}

\begin{remark}\label{rem: constante racine de deux}
   In this paper we defined the coset distances in \eqref{eq: def dtv coset} and \eqref{eq: def dLtwo coset} with respect to the set $\kE_n(\cC, t)$ rather than $\kS_n$. 
It enables not losing an extra factor $\sqrt{2}$ when applying the Cauchy--Schwarz inequality to bound the $L^1$ distance (which is twice the total variation distance) by the $L^2$ distance, and having the same coset $L^2$ profile as the lazy $L^2$ profile, at least for conjugacy classes with small support.
\end{remark}

A second important bound is the following. Liebeck and Shalev \cite{LiebeckShalev2004} showed that, for every $s>0$, as $n\to \infty$:
\begin{equation}\label{eq:bound Liebeck Shalev Witten zeta function}
     \sum_{\lambda \in \widehat{\mathfrak{S}_n}^{**}} d_\lambda^{-s} = O\pg n^{-s}\pd.
\end{equation}
Sums in the form of the left hand side of \eqref{eq:bound Liebeck Shalev Witten zeta function} are referred to as \textit{Witten zeta functions}, in reference to the work of Witten \cite{Witten1991gaugetheory}. Liebeck and Shalev \cite{LiebeckShalev2005} also proved bounds on the Witten zeta functions of other groups.

\medskip

Exponential character bounds are particularly convenient for applications. Informally, plugging a bound of the form $\bg \chi^\lambda(\cC) \bd \leq d_\lambda^{1/t_0}$ into \eqref{eq: coset Diaconis--Shahshahani upper bound lemma} implies, using \eqref{eq:bound Liebeck Shalev Witten zeta function}, that $\dtv(\cC, t) = o(1)$ for $t$ a little larger than $t_0$. This is exactly how we will deduce Theorem \ref{thm: cutoff intro} from Theorem \ref{thm: character bound with no error nor helping term}.

\subsection{Cutoff for conjugacy classes}
We now prove Theorem \ref{thm: cutoff intro}.
\begin{proof}[Proof of Theorem \ref{thm: cutoff intro}]
Let $0<\varepsilon<1$. Let $n$ be a large integer and $\cC\in \Conj^*(\kS_n)$ such that $f(\cC) \geq \delta n$.

Let us first prove the upper bounds. Set $t = \lc (1+\varepsilon) t_{\cC}\rc$. Since the $L^2$ distance is larger than the total variation distance, it is enough to upper bound the $L^2$ distance, and hence the right hand side of \eqref{eq: coset Diaconis--Shahshahani upper bound lemma}.
By Theorem \ref{thm: character bound with no error nor helping term}  and since $t\geq (1+\varepsilon) \frac{\ln n}{\ln(n/f(\cC))}$, we have
\begin{equation}
    2\dLtwo(\cC, t) = \sum_{\lambda\in \widehat{\mathfrak{S}_n}^{**}} d_\lambda^2 \bg \chi^\lambda(\cC)\bd^{2t} \leq \sum_{\lambda\in \widehat{\mathfrak{S}_n}^{**}} d_\lambda^2  \pg d_\lambda^{\frac{\ln(n/f(\cC))}{\ln n}}\pd^{2(1+\varepsilon)\frac{\ln n}{\ln(n/f(\cC))}} = \sum_{\lambda\in \widehat{\mathfrak{S}_n}^{**}} d_{\lambda}^{-2\varepsilon}.
\end{equation}
By the Liebeck--Shalev bounds on the Witten zeta function, recalled as \eqref{eq:bound Liebeck Shalev Witten zeta function}, we conclude that $\dLtwo(\cC, t) = O\pg n^{-2\varepsilon}\pd = o(1)$.

Let us now prove the lower bounds. Set $t = \lf (1-\varepsilon) t_{\cC}\rf$ and let $X_t \sim \Unif_{\cC}^{*t}$. The number of fixed points of a uniform permutation in $\kE(\cC, t)$ is asymptotically $\Poiss(1)$, and is therefore $< n^{\varepsilon/3}$ with probability $1-o(1)$. On the other hand, by classical coupon collector arguments, $X_t$ has $\geq n^{\varepsilon/3}$ fixed points with probability $1-o(1)$. It follows that $\dtv(\cC, t) = 1-o(1)$.
For the $L^2$ lower bound, it suffices to consider the representation $\nu := [n-1,1]$. We have $\bg \chi^\nu(\cC)\bd = \frac{f(\cC)-1}{n-1} \geq \frac{f(\cC)-1}{n}$. Since $n$ is large we have $\bg \chi^\nu(\cC)\bd \geq (f(\cC)/n)^{(1-\varepsilon/2)/(1-\varepsilon)}$ since $f(\cC)\to \infty$, and $d_\nu^2\geq n^2/2$ since $d_\nu \sim n$, and therefore as $n\to \infty$,
\begin{equation}
    2\dLtwo(\cC, t) \geq d_\nu^2 \bg \chi^\nu(\cC)\bd^{2t}  \geq d_\nu^2 \pg \frac{f(\cC)-1}{n}\pd^{2(1-\varepsilon) \frac{\ln n}{\ln (n/f(\cC))}}  \geq \frac{n^{2}}{2} n^{-2+\varepsilon} = \frac{n^{\varepsilon}}{2}\xrightarrow[n\to \infty]{} \infty. \qedhere
\end{equation}
\end{proof}

\subsection{An improved upper bound}
Since there is no error term in Theorem \ref{thm: character bound with no error nor helping term}, we can use the refined bounds on the Witten zeta function from \cite{TeyssierThévenin2025virtualdegreeswitten} to obtain more precise mixing time bounds within the cutoff \textit{window}. We write this in a similar form to \cite[Conjecture 9.3]{Saloff-Coste2004RandomWalksOnFiniteGroups}.
\begin{proposition}\label{prop: improved ell two upper bound}
    Let $\delta>0$, and let $n_0 = n_0(\delta)$ as in Theorem \ref{thm: character bound with no error nor helping term}. There exists a universal constant $A$ such that the following holds. Let $n\geq n_0$, let $\cC\ne \ag \Id \ad$ be a conjugacy class of $\kS_n$ with at least $\delta n$ fixed points, and let $a>0$ such that $t:= \frac{\ln n}{\ln(n/f(\cC))}\pg 1+ \frac{a}{\ln (n-1)} \pd$ is an integer. Then
    \begin{equation}
        2\dtv(\cC, t) \leq \dLtwo(\cC, t) \leq e^{-a} + Ae^{-3a/2}.
    \end{equation}
\end{proposition}
\begin{proof}
 The first inequality always holds by the Cauchy--Schwarz inequality. By the Diaconis--Shahshahani upper bound lemma (recalled as \eqref{eq: coset Diaconis--Shahshahani upper bound lemma}) and Theorem \ref{thm: character bound with no error nor helping term}, we have
    \begin{equation}\label{eq: improved ell two upper bound inter}
        2\dLtwo(\cC, t)^2 = \sum_{\lambda\in \widehat{\mathfrak{S}_n}^{**}} d_\lambda^2 \pg d_\lambda^{-\frac{\ln(n/f)}{\ln n}}\pd^{2t} = \sum_{\lambda\in \widehat{\mathfrak{S}_n}^{**}} d_\lambda^{-s_n},
    \end{equation}
where $s_n = 2a/\ln (n-1)$. For level 1 representations we have $(d_{[n-1,1]})^{-s_n} = (d_{[2,1^{n-2}]})^{-s_n}=(n-1)^{-2a/\ln(n-1)} = e^{-2a}$.
Set $\Lambda = \ag \lambda \in \widehat{\mathfrak{S}_n}^{**} \du 2\leq  \rsym(\lambda) \leq 18\ad$. Then by symmetry $|\Lambda|/2\leq A_0:=\sum_{i=2}^{18} p(i)$. Also, for each $\lambda\in \Lambda$ we have $d_\lambda \geq n(n-2)/2\geq (n-1)^{3/2}$ (since $n$ is large), so $d_\lambda^{-s_n} \leq e^{-3a}$. Therefore $\sum_{\lambda \in \Lambda} d_\lambda^{-s_n} \leq 2A_0 e^{-3a}$.
On the other hand, without loss of generality (up to increasing the value of $A$) we may assume that $2a\geq 12\ln 2$, so by \cite[Proposition 6.1]{TeyssierThévenin2025virtualdegreeswitten} we have $\sum_{\lambda \in \widehat{\mathfrak{S}_n}^{**}\du \rsym(\lambda) >18} d_\lambda^{-s_n} \leq 2e^{-3a}$. Combining these two bounds and plugging them into \eqref{eq: improved ell two upper bound inter}, we obtain $2\dLtwo(\cC, t)^2 \leq 2e^{-2a} +  2\pg A_0 + 1\pd e^{-3a}$. We conclude that
\begin{equation}
    \dLtwo(\cC, t) \leq \sqrt{e^{2a} + \pg A_0 + 1\pd e^{3a}} \leq e^{-a} + (A_0 + 1)^{1/2}e^{-3a/2}. \qedhere
\end{equation}
\end{proof}

\section*{Acknowledgements}

We thank Nathanaël Berestycki, Persi Diaconis, and Valentin Féray for stimulating conversations and helpful comments. LT is especially grateful to Persi Diaconis for his motivating support and for inspiring discussions over the years. 
SOT gratefully acknowledges the hospitality of the University of Vienna, particularly the probability group, during their multiple research visits. SOT was partially supported by EPSRC grant EP/N004566/1.
LT was supported by the Pacific Institute for the Mathematical Sciences, the Centre national de la recherche scientifique, and the Simons foundation, via a PIMS-CNRS-Simons postdoctoral fellowship.
LT and PT were supported by the Austrian Science Fund (FWF) under grants 10.55776/P33083 and SFB F 1002.

\bibliographystyle{alpha}
\bibliography{bibliographieLucas}

\end{document}